\numberwithin{equation}{section}
\def\Z{{\mathbb Z}}     
\def\R{{\mathbb R}}    
\def\C{{\mathbb C}}    
                           \def\cL{{\mathcal L}}
\newcommand{\bs}{\symbol{92}}
\theoremstyle{plain}
\newtheorem{theorem}{Theorem}[section] 
\newtheorem{proposition}[theorem]{Proposition}
\newtheorem{lemma}[theorem]{Lemma}
\newtheorem{corollary}[theorem]{Corollary}
\theoremstyle{definition}
\newtheorem{remark}[theorem]{Remark}
 \journal{}
\date{October 29, 2024}
\begin{document}

\begin{frontmatter}



\title{Prescribed exponential stabilization of scalar neutral differential equations:\\ Application to neural control}

\author[label1,label2]{Cyprien Tamekue}
\author[label1,label2]{Islam Boussaada}
\author[label2]{Karim Trabelsi}
\affiliation[label1]{organization={Université Paris-Saclay, CNRS, CentraleSupélec, Inria, Laboratoire des signaux et systèmes},
            addressline={3 rue Joliot-Curie},
            city={Gif-sur-Yvette},
            postcode={91190},
            country={France}}

\affiliation[label2]{organization={Institut Polytechnique des Sciences Avancées (IPSA)},
            addressline={63 boulevard de Brandebourg},
            city={Ivry-sur-Seine},
            postcode={94200},
            country={France}}



\begin{abstract}

This paper presents a control-oriented delay-based modeling approach for the exponential stabilization of a scalar neutral functional differential equation, which is then applied to the local exponential stabilization of a one-layer neural network of Hopfield type with delayed feedback. The proposed approach utilizes a recently developed partial pole placement method for linear functional differential equations, leveraging the coexistence of real spectral values to explicitly prescribe the exponential decay of the closed-loop solution. While a delayed proportional (P) feedback control may achieve stabilization, it requires higher gains and only allows for a shorter maximum delay compared to the proportional-derivative (PD) feedback control presented in this work. The framework provides a practical illustration of the stabilization strategy, improving upon previous literature results that characterize the solution’s exponential decay for simple real spectral values. This approach enhances neural stability in cases where the inherent dynamics are stable and offers a method to achieve local exponential stabilization with a prescribed decay rate when the inherent dynamics are unstable.

\end{abstract}



\begin{keyword}
Neural networks, Time-delay controller, Neutral equations, PD-controller, Coexistent-real-roots-induced-dominance, Partial pole placement.

\textbf{MSCcodes.}  34K20, 34K60, 37N25, 92B20, 93D15.
\end{keyword}

\end{frontmatter}




\section{Introduction}

In this paper, we focus on the following scalar differential equation with a single constant delay

\begin{equation}\label{eq:HNN}
       \dot{y}(t) = -(
\nu-\mu) y(t)-k_py(t-\tau)-k_d\dot{y}(t-\tau)
\end{equation}
where $\nu, \mu, k_p, k_d > 0$ and $\tau > 0$ represent system parameters. Equation \eqref{eq:HNN} is of neutral type, and our primary interest lies in analyzing its stability properties. Specifically, we explore the placement of real characteristic values, with an emphasis on conditions that guarantee exponential stability with a prescribed decay rate.

The stability properties of equation \eqref{eq:HNN} have several potential applications, including the prescribed local exponential stabilization of one-layer neural network models of Hopfield type. In particular, these stabilization results can be applied to mitigate the effects of hyperexcitability in such models, which can be associated with seizure-like behavior in individual neurons. Thus, while our focus is on the theoretical stability analysis of equation \eqref{eq:HNN}, our results can directly affect the design of biologically inspired stable inherent neural network models.

Neural networks exhibit complex dynamics crucial for their biological or artificial functionality. The stability of these systems is essential, as instabilities can lead to dysfunctional behaviors such as seizures in biological systems \cite{depannemaecker2021modeling,yaffe2015physiology}. The continuous-time one-layer Hopfield-type equation is a classical model that captures the dynamics of a single neuron or a basic unit of an artificial neural network. The model is described by the following nonlinear differential equation \cite{hopfield1984neurons}
\begin{equation}\label{eq:hopfield model nonlinear intrinsec}
\dot{y}(t) = -\nu y(t) + \mu\tanh(y(t)) + I(t)
\end{equation}
where $\dot{y}(t)$ denotes the rate of change of the neuron's state at time $t$, $\nu$ is a positive parameter reflecting the natural decay rate or leakage of the neuron's membrane potential, $\mu$ is a positive parameter that scales the influence of the activation function, and $I(t)$ is an input.

When $I(t) = 0$, the system dynamics exhibit different stability regimes depending on the relationship between $\nu$ and $\mu$. If $\nu \ge \mu$, the zero equilibrium is stable, with exponential stability when $\nu > \mu$ and asymptotic stability when $\nu = \mu$. Conversely, when $\nu < \mu$, the system becomes hyperexcitable, resulting in instability of the zero equilibrium and the appearance of additional stable equilibria.

To address the hyperexcitability and prevent destabilizing transitions, we can introduce a delayed proportional-derivative (PD) feedback control 
\begin{equation}\label{eq:feedback control}
    I(t) = -k_p y(t-\tau) - k_d \dot{y}(t-\tau)
\end{equation}
where $k_p > 0$ and $k_d > 0$ are the proportional and derivative gains, respectively, and $\tau > 0$ represents a synaptic delay. 
Incorporating this feedback in \eqref{eq:hopfield model nonlinear intrinsec} yields the following delayed differential equation
\begin{equation}\label{eq:hopfield model nonlinear}
    \dot{y}(t) = -\nu y(t) + \mu\tanh(y(t)) - k_p y(t-\tau) - k_d \dot{y}(t-\tau).
\end{equation}
The delayed PD controller introduces time-dependent effects, reflecting physiological synaptic transmission delays \cite{gerstner2014,petkoski2019transmission,sreenivasan2019and}. For further insights into using time delays in modeling biological systems, see, for instance, \cite{beuter1993feedback,gopalsamy2013stability, ruan2006delay}. 
In particular, it is commonly accepted to include time delays in modeling sensory and motoric neural pathways due to the time lag one observes in communication. See, for instance, \cite{stepan2009delay,kuang1993delay} and \cite{wei1999stability} for two delays in neural network modeling. Proportional-integral (PI) controllers have also been proven effective in a firing rate model to control the interspike intervals of a neuron by injecting current through an electrode using a dynamic clamp \cite{miranda2010firing}.

{It is worth noting that a delayed proportional (P) feedback control, given by $ I(t) = -k_p y(t-\tau) $ with $k_p>0$ and $\tau>0$, can potentially stabilize \eqref{eq:hopfield model nonlinear intrinsec} in the hyperexcitable regime where $\mu>\nu$. However, as we demonstrate in Section~\ref{ss:comparison with the P controller}, although the P-controller can achieve the same decay rate as the PD controller in \eqref{eq:feedback control}, it requires higher gains (and, consequently, higher energy costs) and only allows for a shorter maximum allowable delay. This makes it less suitable for stabilizing systems that emulate biological processes, where larger delays are often unavoidable. In contrast, exponential stability is achieved using the delayed PD controller with small values for $k_p$ and $k_d$. The smallness of these gains ensures that the inherent dynamics of the original system are largely preserved, which is crucial for neural networks that aim to mimic biological processes, as it prevents the control strategy from overpowering the system’s natural behaviors and characteristics.}


Equation \eqref{eq:HNN} has been widely discussed in the literature regarding its asymptotic and exponential stability \cite{lien2000stability,hale2013introduction,fridman2001new,
boussaada2022generic,schmoderer2023boundary}. Sufficient delay-independent conditions for stability have been presented. In \cite[Example~1, page~26]{lien2000stability}, the authors proved the global uniform asymptotic stability of equation \eqref{eq:HNN} with constant real coefficients (not dependent upon the delay $\tau$) satisfying $\nu-\mu>0$, $|k_d|<1$, and $|k_p|<(\nu-\mu)\sqrt{1-k_d^2}$ using a Lyapunov functional (LF) and a linear matrix inequality (LMI). It's worth noting that asymptotic analysis of \eqref{eq:HNN} was already considered in \cite[Chapter~9, Section~9.8, page~294]{hale2013introduction} when $k_p=0$, $\nu-\mu>0$, and $|k_d|<1$, see also \cite[Chapter~3, Section~3.3.4, page~69]{fridman2014introduction}. Unfortunately, to the best of the authors' knowledge, no known result using a time-domain approach based on an LF and an LMI can be applied to study the asymptotic stability properties of equation \eqref{eq:HNN} when $\nu-\mu\le 0$.

In this paper, we employ a frequency-domain approach to analyze the distribution of the roots of the characteristic quasipolynomial associated with equation \eqref{eq:HNN}. The characteristic quasipolynomial is given by
\begin{equation}\label{eq:DELTA}
    \Delta_0(s) = s + \nu - \mu + e^{-\tau s}(k_d s + k_p), \qquad s \in \mathbb{C}.
\end{equation}

Our primary objective is to characterize the spectral properties of the quasipolynomial \eqref{eq:DELTA} to ensure that the spectral abscissa is negative, thereby guaranteeing the exponential stability of equation \eqref{eq:HNN}. Recent works have highlighted the importance of \emph{multiplicity-induced dominance (MID)}, where a characteristic root with maximal multiplicity dominates the spectral abscissa \cite{boussaada2020multiplicity,boussaada2022generic,mazanti2021multiplicity}. In particular, it appears that a characteristic root of maximal multiplicity (i.e., equal to the degree of the corresponding quasipolynomial) necessarily defines the spectral abscissa of the system. This property generally occurs in \emph{generic} quasipolynomial and is called GMID. However, in the case of \emph{intermediate} multiplicities,  that is, multiplicities which are less than the quasipolynomial's degree,  the IMID occurs (the largest assigned root corresponds to the spectral abscissa) under some additional conditions, see for instance \cite{benarab2023multiplicity}.
 Since these works, the case of the assignment of a characteristic root with maximal multiplicity was recently addressed and thoroughly characterized in \cite{mazanti2021multiplicity} (generic retarded case) and in \cite{boussaada2022generic} (unifying retarded and neutral cases) for LTI DDEs including a \emph{single delay\/}. 
It is essential to point out that the multiplicity of a given spectral value itself is not essential. Still, its connection with the dominance of this root is a meaningful tool for control
synthesis. Namely,  it is shown that, under appropriate conditions, the coexistence of exactly the maximal number of distinct negative zeros of quasipolynomial of reduced degree guarantees the exponential stability of the zero solution of the corresponding time-delay system, a property called \emph{Coexisting Real Roots Induced Dominance (CRRID)}, see for instance \cite{amrane2018qualitative,bedouhene2020real, schmoderer2023boundary,schmoderer2023insights}. These properties opened an interesting perspective in control through the so-called \emph{partial pole placement\/} method, that is, imposing the multiplicity or the coexistence of simple real characteristic root of the closed-loop system by an appropriate choice of the controller gains guarantees the exponential stability of the closed-loop system with a prescribed decay rate. 


The stability properties of \eqref{eq:HNN} have also been linked to the problem of boundary PI control for the transport equation, as explored in \cite{schmoderer2023boundary}. In that work, the authors demonstrated the effectiveness of the CRRID properties in achieving exponential stabilization of the closed-loop transport equation with a prescribed decay rate. In contrast, the results in \cite{Coron2015feedback}, which rely on Lyapunov functions, do not offer the same guarantee regarding the decay rate.

The contributions of this paper are as follows: First, it refines recent results on the \emph{Coexisting Real Roots Induced Dominance (CRRID)} property for the first-order neutral functional differential equations, providing simplified proofs and insights into the conditions required for CRRID to hold. Specifically, we provide a more straightforward proof for the Generic CRRID (GCRRID) property, covering arbitrary distributions of real roots, and derive necessary and sufficient conditions for the Intermediate CRRID (ICRRID) property, including cases where unintentional additional real roots coexist. Moreover, we generalize and simplify the proof of the Intermediate MID (IMID) property from \cite{benarab2023multiplicity}. Secondly, we extend the results from \cite{schmoderer2023boundary}, providing a detailed characterization of the spectrum distribution when the quasipolynomial has three real spectral values or exactly two real ones. Furthermore, in the single-delay case,  when the  Frasson-Verduyn Lunel's \cite{frasson2003large} sufficient condition for the dominance of a simple real spectral value is not met, we prove that the CRRID property remains valid. Finally, by leveraging the CRRID property, we characterize the spectral abscissa's negativity regarding system parameters and establish an explicit and improved exponential estimate for the closed-loop system's solution compared to previous results.

The remainder of this paper is organized as follows: Section \ref{s:problem settings} provides prerequisites in complex analysis and formulates the problem settings. Section \ref{s:main results} presents the main results and their corresponding proofs. Section \ref{s:application to neural network modelling} translates the main results into the modeling and stabilization of one-layer neural networks.


\section{Problem settings and prerequisites}\label{s:problem settings}

Throughout the following, our focus is on studying the asymptotic behavior of solutions of the general scalar neutral functional differential equation (NFDE) in Hale’s form 
\begin{equation}\tag{NDE}\label{eq:NDE}
        \frac{d}{dt}(y(t)+\alpha y(t-\tau)) = -ay(t)-\beta y(t-\tau)
\end{equation}
with initial function $y_0\in C^0([-\tau,0])$.
To this equation corresponds the characteristic quasipolynomial function given by
\begin{equation}\label{eq:Delta}
    \Delta(s) = s+a+e^{-\tau s}(\alpha s+\beta) \quad (s\in\C)
\end{equation}
where $(a,\alpha,\beta)\in\R^3$ and $\tau>0$. It is known that the degree of the quasipolynomial $\Delta$---the sum of the degrees of the involved polynomials plus the number of delays---is equal to three. Moreover, thanks to the P\'{o}lya-Szeg\"{o} bound \cite[Problem~206.2, page~144]{polya1972analysis}, the degree of $\Delta$ is a sharp bound for the number of real roots counting multiplicities of the quasipolynomial $\Delta$.

In the next section, we study the spectral properties of the quasipolynomial function $\Delta$, focusing on characterizing its rightmost root. Consider a complex value $s_0\in\C$ such that $\Delta(s_0)=0$. We say that $s_0$ is  a \textit{dominant} (respectively, \textit{strictly dominant}) root of $\Delta$ if the following holds:
\begin{equation}\label{eq:dominacy}
    \forall s\in\C\bs\{s_0\},\quad\Delta(s) = 0\implies
    \begin{cases}
        \Re(s)\le\Re(s_0),\\
        \mbox{respectively}\\
        \Re(s)<\Re(s_0).
    \end{cases}
\end{equation}
Despite the challenging question of characterizing the spectral abscissa of \eqref{eq:NDE},   Frasson-Verduyn Lunel in \cite[Lemma~B1]{frasson2003large} provides a test for determining the simplicity and dominance of real spectral values in the multi-delay scalar neutral equations, offering fundamental insights. Notice that such a characterization is closely related to the exponential/asymptotic behavior of the equation \eqref{eq:NDE}.

According to Frasson-Verduyn Lunel’s lemma when restricted to the single-delay case, a real root $s_0$ of $\Delta$ is simple and dominant if  $V(s_0)<1$ where $V$  is a specific functional construct derived from $\Delta$. In other words, the condition is as follows; see, \cite[Lemma~5.1]{frasson2003large}.
\begin{lemma}\label{lem:Frasson-Verduyn Lunel}
    Suppose that there exists a real zero $s_0$ of $\Delta$. If $V(s_0)<1$, then $s_0$ is a real simple dominant zero of $\Delta$. Here,
    \[
    V(s_0) = (|\alpha|(1+|s_0|\tau)+|\beta|\tau)e^{-s_0\tau}
    \]
    where $(\alpha,\beta)\in\R^2$ and $\tau>0$.
\end{lemma}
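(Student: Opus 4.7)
The plan is to translate the problem into the closed right half-plane by setting $s = s_0 + z$ and to show that $\Delta(s_0+z)/z$ stays strictly away from zero whenever $\Re z \ge 0$. This single estimate will give both simplicity (by letting $z\to 0$, so that the quotient tends to $\Delta'(s_0)$) and strict dominance (no other zero of $\Delta$ can have real part $\ge\Re s_0$).

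\textbf{Step 1: normalize at $s_0$.} Using $\Delta(s_0)=0$, i.e.\ $s_0+a = -e^{-\tau s_0}(\alpha s_0+\beta)$, a direct expansion gives
\[
\Delta(s_0+z) = z + e^{-\tau s_0}\bigl[(\alpha s_0+\beta)(e^{-\tau z}-1) + \alpha z\, e^{-\tau z}\bigr].
\]
The key identity is $e^{-\tau z}-1 = -\tau z\int_0^1 e^{-\tau z u}\,du$, which allows me to divide by $z$ cleanly and obtain
\[
\frac{\Delta(s_0+z)}{z}
= 1 + e^{-\tau s_0}\!\left[\alpha\, e^{-\tau z} - \tau(\alpha s_0+\beta)\int_0^1 e^{-\tau z u}\,du\right].
\]
On the closed half-plane $\Re z \ge 0$, both $|e^{-\tau z}|$ and $|e^{-\tau z u}|$ ($u\in[0,1]$) are bounded by $1$.

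\textbf{Step 2: uniform bound.} Applying the triangle inequality together with $|\alpha s_0+\beta|\le |\alpha||s_0|+|\beta|$,
\[
\left|\frac{\Delta(s_0+z)}{z} - 1\right|
\;\le\; e^{-\tau s_0}\bigl[\,|\alpha| + \tau(|\alpha||s_0|+|\beta|)\bigr]
\;=\; V(s_0).
\]
Under the hypothesis $V(s_0)<1$, the quotient $\Delta(s_0+z)/z$ lies in the open disk of radius $V(s_0)$ around $1$ for every $z$ with $\Re z\ge 0$, hence it is never zero. Consequently $\Delta(s)$ has no zero other than $s_0$ in the closed half-plane $\{\Re s \ge \Re s_0\}$, which is strict dominance. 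Moreover, letting $z\to 0$ gives $|\Delta'(s_0)-1|\le V(s_0)<1$, so $\Delta'(s_0)\ne 0$ and $s_0$ is simple.

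The only subtlety, which I would expect to be the main obstacle, is identifying the right scalar factorization of $\Delta(s_0+z)$: the $z$-factor must be extracted from the whole neutral contribution $(\alpha s+\beta)e^{-\tau s}$ in a way that separates the pure $\alpha$-term (which picks up the $|\alpha|$ in $V$) from the term that produces $(|\alpha||s_0|+|\beta|)\tau$. The integral representation of $e^{-\tau z}-1$ is what makes the bounds sharp on the whole right half-plane, rather than only on a circle, and it avoids having to invoke Rouch\'e's theorem together with growth estimates at infinity.
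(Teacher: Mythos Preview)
The paper does not prove this lemma; it is quoted from Frasson and Verduyn Lunel \cite[Lemma~5.1]{frasson2003large} without argument, so there is no in-paper proof to compare against.

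Your proof is correct. The normalization $\Delta(s_0+z)/z$ together with the integral representation $e^{-\tau z}-1=-\tau z\int_0^1 e^{-\tau zu}\,du$ is exactly what is needed: it produces the uniform bound $\bigl|\Delta(s_0+z)/z-1\bigr|\le V(s_0)$ on the whole closed half-plane $\Re z\ge 0$, and both conclusions (simplicity via $z\to 0$, dominance via $z\ne 0$) follow immediately. Your conclusion is in fact slightly sharper than the lemma as stated in the paper: you obtain \emph{strict} dominance (no other zero can have $\Re s=\Re s_0$), whereas the lemma only asserts dominance in the sense $\Re s\le\Re s_0$.
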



Before starting the study of the localisation of the quasipolynomial $\Delta$ roots, let us state the following lemma the proof of which can be found in \cite[Problem~77, page~46]{polya2012problems}.

\begin{lemma}[Descartes' rule of signs]\label{lem:Descartes' rule of signs}
    Let $a_1$, $a_2$, $a_3$, $\lambda_1$, $\lambda_2$, $\lambda_3$ be real constants, such that $\lambda_1<\lambda_2<\lambda_3$. Denote by $Z$ the number of real zeros of the entire function
    \begin{equation}
        \mathcal{F}(x) = a_1e^{\lambda_1 x}+a_2e^{\lambda_2 x}+a_3e^{\lambda_3 x}
    \end{equation}
    and by $C$, the number of sign changes in the sequence of numbers $a_1$, $a_2$, $a_3$. Then, $C-Z$ is a non-negative even integer.
\end{lemma}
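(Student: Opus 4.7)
The plan is to deduce the claim from two independent ingredients valid for any exponential sum $\mathcal{F}(x) = \sum_{i=1}^n a_i e^{\lambda_i x}$ with $\lambda_1 < \cdots < \lambda_n$ and nonzero coefficients: the inequality $Z \leq C + 1$ and the parity identity $Z \equiv C \pmod{2}$. Together these force $Z \leq C$, so that $C - Z$ is a non-negative even integer.

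For the inequality I would induct on $n$, the present lemma being the case $n = 3$. Multiplying $\mathcal{F}$ by $e^{-\lambda_1 x}$ preserves the real zero set with multiplicities, so one may work with $g(x) = a_1 + \sum_{i \geq 2} a_i e^{(\lambda_i - \lambda_1) x}$. Differentiation cancels the constant $a_1$ and yields an $(n-1)$-term exponential sum whose coefficient sign pattern coincides with that of $(a_2, \ldots, a_n)$, since each $\lambda_i - \lambda_1 > 0$. Rolle's theorem applied with multiplicity then gives at least $Z - 1$ real zeros of $g'$, and the induction hypothesis provides $Z - 1 \leq C'$, where $C' \leq C$ is the number of sign changes of $(a_2, \ldots, a_n)$. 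The base case $n = 2$ is the direct computation that $a_1 + a_2 e^{\mu x} = 0$ (with $\mu > 0$) has exactly one real root when $a_1 a_2 < 0$ and none when $a_1 a_2 > 0$.

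For the parity, I would exploit the asymptotic behaviour $\mathcal{F}(x) \sim a_1 e^{\lambda_1 x}$ as $x \to -\infty$ and $\mathcal{F}(x) \sim a_n e^{\lambda_n x}$ as $x \to +\infty$. Consequently $\mathcal{F}$ undergoes a net sign change between the two infinities if and only if $a_1 a_n < 0$, which is exactly the condition that $C$ is odd. Since the number of sign transitions of $\mathcal{F}$ on $\mathbb{R}$ equals the number of odd-multiplicity real zeros, and this quantity has the same parity as $Z$ (even-multiplicity zeros contribute an even amount to $Z$ but no sign change), we obtain $Z \equiv C \pmod{2}$.

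The principal obstacle is the careful bookkeeping of multiplicities in the Rolle step: a zero of $g$ of multiplicity $m$ contributes a zero of $g'$ of multiplicity $m - 1$, and between two consecutive distinct zeros of $g$, $g'$ has at least one additional zero; summing these correctly yields the bound $Z - 1$. Once this is secured, the combination of $Z \leq C + 1$ and $Z \equiv C \pmod{2}$ immediately forces $Z \leq C$, completing the proof.
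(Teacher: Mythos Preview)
The paper does not give its own proof of this lemma; it cites Problem~77 in P\'olya--Szeg\"o, and the argument there is exactly the Rolle-plus-parity one you sketch. Your approach is therefore the standard one and is correct in substance.

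One point of organisation deserves tightening. You present $Z\le C+1$ and $Z\equiv C\pmod 2$ as two \emph{independent} ingredients, but the first cannot be proved by the induction you describe in isolation: in your inductive step you write ``the induction hypothesis provides $Z-1\le C'$,'' which amounts to $Z(g')\le C'$ for the $(n-1)$-term sum $g'$---that is the \emph{full} lemma at level $n-1$, not merely $Z(g')\le C'+1$. With only the weaker hypothesis, Rolle would yield $Z-1\le C'+1\le C+1$, hence $Z\le C+2$, which closes nothing. The repair is purely cosmetic: carry out a single induction on the full statement that $C-Z$ is a non-negative even integer. In the inductive step, Rolle together with the full hypothesis for $n-1$ terms gives $Z\le C'+1\le C+1$, and your parity argument (which is self-contained and needs no induction) then forces $Z\le C$. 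Your final paragraph already contains this logic; the only change is to make the induction hypothesis match what you actually invoke.
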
 
One also has the following important result, which extends \cite[Corollary~2]{schmoderer2023boundary}. It includes the case where we exactly assign two real spectral values to $\Delta$.
\begin{lemma}\label{lem:PS bound}
    Let $\Delta$ be the quasipolynomial defined by \eqref{eq:Delta} with real coefficients. Let $\eta\in\{0, 1\}$, if $\Delta$ admits exactly $3-\eta$ real roots, then any root $x+i\omega\in\C$ of $\Delta$ with $\omega\neq 0$ satisfies 
    \begin{equation}
        |\omega|\ge{(2-\eta)\pi}/{\tau}.
    \end{equation}
\end{lemma}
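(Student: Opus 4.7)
The plan is to proceed by contradiction using a strip refinement of the P\'{o}lya-Szeg\"{o} estimate for zeros of quasipolynomials. Fix $\eta\in\{0,1\}$, assume $\Delta$ has exactly $3-\eta$ real roots (counted with multiplicity), and suppose, contrary to the claim, that there exists a non-real root $s_\ast=x_\ast+i\omega_\ast$ of $\Delta$ with $0<|\omega_\ast|<(2-\eta)\pi/\tau$. Since the coefficients of $\Delta$ are real, $\bar s_\ast=x_\ast-i\omega_\ast$ is a root as well, so the closed horizontal strip
\[
S:=\{\,s\in\mathbb{C}\;:\;|\im(s)|\le|\omega_\ast|\,\}
\]
contains at least $(3-\eta)+2=5-\eta$ roots of $\Delta$, counting multiplicity.

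The next step is to invoke the strip version of P\'{o}lya-Szeg\"{o}: for a real-coefficient quasipolynomial $\Delta(s)=P_0(s)+P_1(s)e^{-\tau s}$ of degree $d$ with a single delay $\tau>0$, the number of zeros in the closed strip $|\im(s)|\le R$ is at most $d+\lfloor R\tau/\pi\rfloor$. Applied with $d=3$ and $R=|\omega_\ast|<(2-\eta)\pi/\tau$, the strict inequality $R\tau/\pi<2-\eta$ (together with $2-\eta\in\{1,2\}$ being a positive integer) gives $\lfloor R\tau/\pi\rfloor\le 1-\eta$, so $S$ contains at most $4-\eta$ roots. This contradicts the count of $5-\eta$ roots obtained in the previous step, forcing $|\omega_\ast|\ge(2-\eta)\pi/\tau$.

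The main obstacle will be justifying the strip bound itself, since only its real-axis counterpart is stated in the excerpt (Lemma~\ref{lem:Descartes' rule of signs}). I would derive it via the argument principle applied to the rectangular contour $\{|\Re(s)|\le M,\,|\im(s)|\le R\}$: using the asymptotics $\Delta(s)\sim s$ as $\Re(s)\to+\infty$ and $\Delta(s)\sim\alpha s\,e^{-\tau s}$ as $\Re(s)\to-\infty$, the dominant contribution of $2\tau R$ to the total argument variation is produced by the left vertical side (rotation induced by $e^{-\tau s}$), while the other three sides contribute only $O(1)$ as $M\to\infty$; division by $2\pi$ yields the announced count. An alternative route appeals to P\'{o}lya's density theorem for entire functions of exponential type, whose indicator for $\Delta$ is $h(\theta)=\max\{0,-\tau\cos\theta\}$. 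This classical strip estimate is presumably the ingredient underlying \cite[Corollary~2]{schmoderer2023boundary}, which the present lemma extends and sharpens to encompass the case $\eta=1$.
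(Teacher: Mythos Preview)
Your argument is essentially the same as the paper's: a contradiction via the P\'{o}lya--Szeg\"{o} strip bound, counting at least $5-\eta$ roots (the $3-\eta$ real ones plus the conjugate pair) in a horizontal strip too narrow to accommodate them. The paper simply quotes the two-sided estimate $\tau(\beta-\alpha)/(2\pi)-3\le M_{\alpha,\beta}\le \tau(\beta-\alpha)/(2\pi)+3$ from \cite[Problem~206.2, page~144]{polya1972analysis} and applies it to the strip $|\Im(s)|\le(2-\eta)\pi/\tau-\varepsilon$, so your concern about having to re-derive the strip bound is unfounded---it is a classical result cited directly, and Lemma~\ref{lem:Descartes' rule of signs} (Descartes' rule) is an unrelated tool used elsewhere, not its ``real-axis counterpart.''
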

\begin{proof}
    Recall from \cite[Problem~206.2, page~144]{polya1972analysis} that if $M_{\alpha,\beta}$ denotes the number of roots of $\Delta$ contained in the horizontal strip $\{s\in\C\mid\alpha\le\Im(s)\le\beta\}$ counting multiplicities, then the following bound holds
    \begin{equation}\label{eq:PS bound}
        \frac{\tau(\beta-\alpha)}{2\pi}-3\le M_{\alpha,\beta}\le \frac{\tau(\beta-\alpha)}{2\pi}+3.
    \end{equation}
    To complete the proof of the lemma, we argue by contradiction. Assume that $\Delta$ admits $3-\eta$ real roots, and let $s_0:=x+i\omega\in\C$ be a root of $\Delta$ with $\omega\neq 0$ and $|\omega|<(2-\eta)\pi/\tau$. Then there exists $\varepsilon>0$ such that $|\omega|<{(2-\eta)\pi}/{\tau}-\varepsilon$. Since $\Delta$ has real coefficients, the complex conjugate of $s_0$ is also a root of $\Delta$, both belonging to the horizontal strip $\{s\in\C\mid-\frac{(2-\eta)\pi}{\tau}+\varepsilon\le\Im(s)\le\frac{(2-\eta)\pi}{\tau}-\varepsilon\}$. It follows that $\Delta$ admits at least  $5-\eta$ roots in this strip, which is inconsistent, since by the P\'{o}lya-Szeg\"{o} bound \eqref{eq:PS bound}, the number of zero in this strip satisfies 
    \begin{equation*}
        M_{-\frac{(2-\eta)\pi}{\tau}+\varepsilon,\frac{(2-\eta)\pi}{\tau}-\varepsilon}\le 5-\eta-\frac{\varepsilon \tau}{\pi}< 5-\eta.\qedhere
    \end{equation*}
\end{proof}

Let us investigate the coexistence of---non-necessarily equidistributed---three real roots for the quasipolynomial $\Delta$. Due to the linearity of $\Delta$ with respect to its coefficients $a$, $\alpha$ and $\beta$, one reduces the system $\Delta(s_1)=\Delta(s_2)=\Delta(s_3) = 0$ to the linear system 
\begin{equation}\label{eq:cramer system  degree 3}
A_{\tau,3}(s_1,s_2,s_3)X = B,    
\end{equation}
where $B = -(s_1e^{\tau s_1},s_2e^{\tau s_2}, s_3e^{\tau s_3})^t$, $X=(\alpha,\beta,a)^{t}$ and 
\begin{equation}
A_{\tau,3}(s_1,s_2,s_3)= \begin{bmatrix}
s_1 & 1 & e^{\tau s_1} \\
s_2 & 1 & e^{\tau s_2} \\
s_3 & 1 & e^{\tau s_3}
\end{bmatrix}.
\end{equation}

Using \cite[Theorem~2]{bedouhene2020real}, one immediately obtains that the determinant of the structured functional Vandermonde-type matrix  $A_{\tau,3}(s_1,s_2,s_3)$ is given by
\begin{equation}\label{eq:determinant degree 3}
    D_{\tau,3}(s_1,s_2,s_3)=\tau^2(s_1-s_2)(s_1-s_3)(s_2-s_3)F_{-\tau,2}
\end{equation}
where $F_{-\tau,2}:=F_{-\tau,2}(s_1,s_2,s_3)$ is defined by
\begin{equation}\label{eq:recursive function}
    F_{-\tau,2}= \int_{0}^{1}\int_{0}^{1}(1-t_1)e^{\tau(t_1s_1+(1-t_1)(t_2s_2+(1-t_2)s_3))}dt_1dt_2>0.
\end{equation}
By integrating \eqref{eq:recursive function}, one may carefully check that
\begin{equation}\label{eq:recursive function 1}
    F_{-\tau,2} = \frac{e^{\tau s_3}(s_1-s_2)+e^{\tau s_2}(s_3-s_1)+e^{\tau s_1}(s_2-s_3)}{\tau^2(s_1-s_3)(s_2-s_3)(s_1-s_2)}.
\end{equation}

For distinct real spectral values $s_3\neq s_2\neq s_1$, one deduces from \eqref{eq:determinant degree 3} and \eqref{eq:recursive function} that for every $\tau>0$, $D_{\tau,3}(s_1,s_2,s_3)\neq 0$. It follows that \eqref{eq:cramer system  degree 3} is a Cramer system, and one can immediately compute the coefficients $\alpha$, $\beta$, and $a$ using the Cramer formula. More precisely, one has the following.
\begin{lemma}\label{lem:coeffs}
    For a fixed $\tau>0$, the quasipolynomial $\Delta$ admits three distinct real spectral values $s_3$, $s_2$ and $s_1$ if, and only if, the coefficients $\beta$, $\alpha$ and $a$ are respectively given by
        \begin{equation}\label{eq:beta}
    \beta(\tau) = \frac{1}{D_{\tau,3}(s_1,s_2,s_3)}\det\begin{bmatrix}
s_1 & -s_1 e^{\tau s_1} & e^{\tau s_1} \\
s_2 & -s_2 e^{\tau s_2} & e^{\tau s_2} \\
s_3 & -s_3 e^{\tau s_3} & e^{\tau s_3} \\
\end{bmatrix}.
\end{equation}
\begin{equation}\label{eq:alpha}
    \begin{aligned}
        \alpha(\tau) &= \frac{1}{D_{\tau,3}(s_1,s_2,s_3)}\det\begin{bmatrix}
            -s_1e^{\tau s_1} & 1 & e^{\tau s_1} \\
            -s_2e^{\tau s_2} & 1 & e^{\tau s_2} \\
            -s_3e^{\tau s_3} & 1 & e^{\tau s_3}
        \end{bmatrix}\\
        &= \frac{F_{-\tau,2}(s_1+s_2,s_1+s_3,s_2+s_3)}{F_{-\tau,2}(s_1,s_2,s_3)}.
    \end{aligned}
\end{equation}
\begin{equation}\label{eq:a1}
    \begin{aligned}
        a(\tau) &= \frac{1}{D_{\tau,3}(s_1,s_2,s_3)}\det\begin{bmatrix}
s_1 & 1 & -s_1 e^{\tau s_1} \\
s_2 & 1 & -s_2 e^{\tau s_2} \\
s_3 & 1 & -s_3 e^{\tau s_3} \\
\end{bmatrix}\\
        &= -s_1-\frac{F_{-\tau,1}(s_2,s_3)}{\tau F_{-\tau,2}(s_1,s_2,s_3)}\\
        &=-s_2-\frac{F_{-\tau,1}(s_1,s_3)}{\tau F_{-\tau,2}(s_1,s_2,s_3)}\\
        &=-s_3-\frac{F_{-\tau,1}(s_1,s_2)}{\tau F_{-\tau,2}(s_1,s_2,s_3)}.
    \end{aligned}
\end{equation}
Here, one has for all $(u, v)\in\R^2$,
\begin{equation}\label{eq:F 1}
   F_{-\tau,1}(u,v) := \int_0^1e^{\tau(tu+(1-t)v)}dt=\frac{e^{\tau u}-e^{\tau v}}{\tau(u-v)}>0.
\end{equation}
\end{lemma}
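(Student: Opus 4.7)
The plan is to derive all three formulas simultaneously by applying Cramer's rule to the linear system $A_{\tau,3}(s_1,s_2,s_3)X=B$ that is already set up just above the lemma. Multiplying each condition $\Delta(s_i)=0$ through by $e^{\tau s_i}$ turns it into $\alpha s_i+\beta+a\,e^{\tau s_i}=-s_i e^{\tau s_i}$, which is exactly the $i$-th row of this system, so $(\alpha,\beta,a)$ kills $\Delta$ at $s_1,s_2,s_3$ if and only if it solves $A_{\tau,3}X=B$. Since the $s_i$ are pairwise distinct, the formula \eqref{eq:determinant degree 3} together with the strict positivity of $F_{-\tau,2}$ visible from \eqref{eq:recursive function} gives $D_{\tau,3}(s_1,s_2,s_3)\neq 0$. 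The system is therefore Cramer, which proves the ``if and only if'' at once and produces the three determinant expressions displayed in \eqref{eq:beta}, \eqref{eq:alpha} and \eqref{eq:a1}.

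What remains is the symbolic simplification. For $\alpha(\tau)$ I would expand the numerator determinant along the middle column of ones; each of the three $2\times 2$ cofactors collapses to $(s_j-s_i)\,e^{\tau(s_i+s_j)}$, so the numerator becomes
\[
(s_2-s_3)e^{\tau(s_2+s_3)}+(s_3-s_1)e^{\tau(s_1+s_3)}+(s_1-s_2)e^{\tau(s_1+s_2)}.
\]
Comparing with the closed form \eqref{eq:recursive function 1} evaluated at $(u_1,u_2,u_3)=(s_1+s_2,s_1+s_3,s_2+s_3)$, and noting that the associated Vandermonde factor $(u_1-u_3)(u_2-u_3)(u_1-u_2)$ is $(s_1-s_3)(s_1-s_2)(s_2-s_3)$, this sum equals $\tau^2(s_1-s_3)(s_2-s_3)(s_1-s_2)\,F_{-\tau,2}(s_1+s_2,s_1+s_3,s_2+s_3)$. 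Dividing by $D_{\tau,3}$ cancels the common Vandermonde factors and produces the ratio claimed in \eqref{eq:alpha}.

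For $a(\tau)$ the cleanest route is to bypass the Cramer determinant and use the characteristic equation itself: once $\alpha$ and $\beta$ are known, each $\Delta(s_i)=0$ yields $a=-s_i-e^{-\tau s_i}(\alpha s_i+\beta)$ for $i\in\{1,2,3\}$. Substituting the formulas obtained above, expanding everything as a combination of $e^{\tau s_1},e^{\tau s_2},e^{\tau s_3}$, and regrouping with the help of \eqref{eq:recursive function 1} and \eqref{eq:F 1}, I expect the residual combination to be recognized as $F_{-\tau,1}(s_j,s_k)/(\tau F_{-\tau,2}(s_1,s_2,s_3))$ with $\{i,j,k\}=\{1,2,3\}$. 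Each choice of $i$ delivers one of the three asserted identities, and their mutual consistency is forced by the uniqueness of the Cramer solution.

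I expect the main obstacle to be the symbolic bookkeeping in this last step, which amounts to matching two alternating three-term exponential sums with Vandermonde-type antisymmetries. A clean way to sidestep that algebra is to clear denominators and verify the equivalent scalar identity
\[
(a+s_i)\,D_{\tau,3}(s_1,s_2,s_3)=-\tau(s_1-s_2)(s_1-s_3)(s_2-s_3)\,F_{-\tau,1}(s_j,s_k),
\]
by reading off and comparing the coefficient of each $e^{\tau s_\ell}$ on both sides through \eqref{eq:recursive function 1} and \eqref{eq:F 1}; this reduces the whole question to three elementary scalar identities that are immediate to check, after which \eqref{eq:a1} follows.
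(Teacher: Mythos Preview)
Your proposal is correct and follows essentially the same route as the paper: Cramer's rule on the system \eqref{eq:cramer system  degree 3} gives the determinant forms, the cofactor expansion of the $\alpha$-numerator and its identification via \eqref{eq:recursive function 1} is exactly the paper's computation of $\Lambda_1$, and your ``cleaner'' identity $(a+s_i)\,D_{\tau,3}=-\tau(s_1-s_2)(s_1-s_3)(s_2-s_3)\,F_{-\tau,1}(s_j,s_k)$ is precisely the paper's three-way decomposition of $\Lambda_2$ rewritten. The only cosmetic difference is that the paper goes straight to the determinant $\Lambda_2$ rather than detouring through $a=-s_i-e^{-\tau s_i}(\alpha s_i+\beta)$, but since you end up verifying the same scalar identity, the arguments coincide.
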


\begin{proof}
    The Relation \eqref{eq:beta} and the first identities in relations \eqref{eq:alpha}-\eqref{eq:a1} follow directly by applying the Cramer formulas to the Cramer system \eqref{eq:cramer system  degree 3}. To obtain the others in \eqref{eq:alpha}-\eqref{eq:a1}, one may use \eqref{eq:determinant degree 3}, \eqref{eq:recursive function 1}, \eqref{eq:F 1} and the following.
    \begin{equation*}
    \begin{aligned}
        \Lambda_1 &= e^{\tau(s_1+s_2)}(s_1-s_2)+e^{\tau(s_1+s_3)}(s_3-s_1)+e^{\tau(s_2+s_3)}(s_2-s_3)\\
        &= \tau^2(s_1-s_2)(s_1-s_3)(s_2-s_3)F_{-\tau,2}(s_1+s_2,s_1+s_3,s_2+s_3)
    \end{aligned}
\end{equation*}
     \begin{equation*}
    \begin{aligned}
        \Lambda_2 &= e^{\tau s_3}(s_2-s_1)s_3+e^{\tau s_2}(s_1-s_3)s_2+e^{\tau s_1}(s_3-s_2)s_1\\
        &= -s_1D_{\tau,3}(s_1,s_2,s_3)-(e^{\tau s_2}-e^{\tau s_3})(s_1-s_2)(s_1-s_3)\\
        &=-s_2D_{\tau,3}(s_1,s_2,s_3)-(e^{\tau s_1}-e^{\tau s_3})(s_1-s_2)(s_2-s_3)\\
        &=-s_3D_{\tau,3}(s_1,s_2,s_3)-(e^{\tau s_1}-e^{\tau s_2})(s_1-s_3)(s_2-s_3)
    \end{aligned}
\end{equation*}
where
  \begin{equation*}
       \Lambda_1:=\det\begin{bmatrix}
-s_1e^{\tau s_1} & 1 & e^{\tau s_1} \\
-s_2e^{\tau s_2} & 1 & e^{\tau s_2} \\
-s_3e^{\tau s_3} & 1 & e^{\tau s_3} \\
\end{bmatrix},\quad    \Lambda_2:=  \det\begin{bmatrix}
s_1 & 1 & -s_1 e^{\tau s_1} \\
s_2 & 1 & -s_2 e^{\tau s_2} \\
s_3 & 1 & -s_3 e^{\tau s_3} \\
\end{bmatrix}.
 \end{equation*}
 This completes the proof of the lemma.
\end{proof}
The next lemma is a key ingredient in simplifying the proofs of our main results in Section~\ref{ss:3 DRSV}.
\begin{lemma}\label{lem:properties on alpha and beta GCRRID}
   Assume that the quasipolynomial $\Delta$ has three real spectral values $s_3<s_2<s_1$. Then, the following holds
        \begin{equation}\label{eq:asymptotic alpha 3RR}
      \begin{split}
 	 0<\alpha<e^{\tau s_1}&.\\
 \alpha<e^{\tau x}&\qquad(\forall x\ge s_1).
  \end{split}
 \end{equation}
\end{lemma}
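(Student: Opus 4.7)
The plan is to use the integral representation~\eqref{eq:recursive function} together with the closed form~\eqref{eq:alpha} for $\alpha$ established in Lemma~\ref{lem:coeffs}. Since the integrand $(1-t_1)e^{\tau(\cdots)}$ is nonnegative and strictly positive on a set of positive Lebesgue measure, both $F_{-\tau,2}(s_1+s_2,s_1+s_3,s_2+s_3)$ and $F_{-\tau,2}(s_1,s_2,s_3)$ are strictly positive, so $\alpha>0$ follows at once. Moreover, since $x\mapsto e^{\tau x}$ is increasing, the second assertion $\alpha<e^{\tau x}$ for all $x\ge s_1$ reduces to proving the single inequality $\alpha<e^{\tau s_1}$.

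For the upper bound, the key step will be an algebraic simplification of the ratio of the two integrals. Expanding and grouping by the coefficients of $s_1$, $s_2$, $s_3$, I expect the exponents of the numerator and denominator integrands to differ by
\begin{equation*}
    E_{\mathrm{num}}(t_1,t_2)-E_{\mathrm{den}}(t_1,t_2) = s_2 + (1-t_1)t_2\,(s_1+s_3-2s_2).
\end{equation*}
Setting $c(t_1,t_2):=(1-t_1)t_2\in[0,1]$ and $K:=\tau(s_1+s_3-2s_2)$, this identity rewrites $\alpha$ as
\begin{equation*}
    \alpha = e^{\tau s_2}\cdot\frac{\int_0^1\!\!\int_0^1 (1-t_1)\,e^{\tau E_{\mathrm{den}}}\,e^{cK}\,dt_1\,dt_2}{\int_0^1\!\!\int_0^1 (1-t_1)\,e^{\tau E_{\mathrm{den}}}\,dt_1\,dt_2},
\end{equation*}
so that $\alpha/e^{\tau s_2}$ appears as a convex average of the exponential factor $e^{cK}$.

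The conclusion then follows from a pointwise comparison of exponentials, splitting on the sign of $K$. If $K\ge 0$, then $c\le 1$ gives $e^{cK}\le e^{K}$, so the ratio is at most $e^K$, whence $\alpha\le e^{\tau(s_1+s_3-s_2)}<e^{\tau s_1}$ because $s_3<s_2$. If $K<0$, then $c\ge 0$ gives $e^{cK}\le 1$, so the ratio is at most $1$ and $\alpha\le e^{\tau s_2}<e^{\tau s_1}$ because $s_2<s_1$. The main obstacle is the exponent-difference identity: it is elementary but error-prone, and everything else reduces to monotonicity of the exponential together with the strict ordering $s_3<s_2<s_1$.
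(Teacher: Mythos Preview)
Your argument is correct: the exponent identity
\[
E_{\mathrm{num}}(t_1,t_2)-E_{\mathrm{den}}(t_1,t_2)=s_2+(1-t_1)t_2\,(s_1+s_3-2s_2)
\]
holds (a direct expansion confirms it), and the subsequent case split on the sign of $K=\tau(s_1+s_3-2s_2)$ gives $\alpha\le e^{\tau(s_1+s_3-s_2)}<e^{\tau s_1}$ when $K\ge 0$ and $\alpha\le e^{\tau s_2}<e^{\tau s_1}$ when $K<0$. The reduction of the second assertion to the first via monotonicity of the exponential is fine.

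The paper takes a slightly different route. Instead of keeping the denominator as $F_{-\tau,2}(s_1,s_2,s_3)$, it invokes the symmetry $F_{-\tau,2}(s_1,s_2,s_3)=F_{-\tau,2}(s_2,s_3,s_1)$ (from \cite[Lemma~4]{bedouhene2020real}) and compares $e^{\tau x}F_{-\tau,2}(s_2,s_3,s_1)$ with $F_{-\tau,2}(s_1+s_2,s_2+s_3,s_1+s_3)$ directly for every $x\ge s_1$. With that permutation, the pointwise exponent difference simplifies to
\[
x-t_1 s_1-(1-t_1)t_2\,s_2-(1-t_1)(1-t_2)\,s_3,
\]
which is $x$ minus a convex combination of $s_1,s_2,s_3$ and hence nonnegative for all $(t_1,t_2)$ whenever $x\ge s_1$, with no case distinction needed. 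What the paper's choice buys is a single uniform pointwise inequality; what your approach buys is self-containment (no appeal to the external symmetry lemma) and the pleasant interpretation of $\alpha/e^{\tau s_2}$ as a weighted average of $e^{cK}$ with $c\in[0,1]$.
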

\begin{proof}
    Let $x\ge s_1$. Then, one has $e^{\tau x}-\alpha>0$ since
 \begin{equation*}
     e^{\tau x}-\alpha = \frac{e^{\tau x}F_{-\tau,2}(s_2,s_3,s_1)-F_{-\tau,2}(s_1+s_2,s_2+s_3, s_1+s_3)}{F_{-\tau,2}(s_2,s_3,s_1)}
 \end{equation*}
 and $F_{-\tau,2}(s_1,s_2,s_3)=F_{-\tau,2}(s_2,s_3,s_1)$ by \cite[Lemma~4]{bedouhene2020real} and the fact that for every $(t_1, t_2)\in[0, 1]^2$, one has
 \begin{equation*}
     \begin{split}
         x+t_1s_2+(1-t_1)(t_2s_3+(1-t_2)s_1))-t_1(s_1+s_2)&-\\
         (1-t_1)(t_2(s_2+s_3)-(1-t_2)(s_1+s_3)))&=\\
         x-t_1s_1-t_2(1-t_1)s_2-(1-t_1)(1-t_2)s_3 & >\\
         (1-t_1)s_1-t_2(1-t_1)s_1-(1-t_1)(1-t_2)s_1& =0.\qedhere
     \end{split}
 \end{equation*}
\end{proof}

\begin{remark}
    From Lemmas~\ref{lem:coeffs} and \ref{lem:properties on alpha and beta GCRRID}, if $s_1<0$ then for every $\tau>0$, one has $\beta(\tau)=-\alpha s_1+\zeta e^{\tau s_1}>0$ and $a(\tau)$ may change sign. Here,
    \begin{equation}\label{eq:function zeta}
        \zeta:=\zeta(\tau)=\frac{F_{-\tau,1}(s_2,s_3)}{\tau F_{-\tau,2}(s_1,s_2,s_3)}>0.
    \end{equation}
\end{remark}

\section{Main results}\label{s:main results}

In this section, we establish our main results. Section~\ref{ss:3 DRSV} discusses the GCRRID property for the quasipolynomial $\Delta$ and derives a new and simpler proof for its validity. In Section~\ref{ss:2 DRSV}, the ICRRID property is investigated. Next, Section~\ref{ss:Frasson's sufficient conditions} provides an example of a simple dominant spectral value violating the sufficient condition established by Frasson-Verduyn Lunel, thereby emphasizing the non-necessary nature of the latter condition.  Additionally, we will present a simpler proof of the exponential estimates for solutions of equation \eqref{eq:NDE} in Section~\ref{ss:exponential estimates}.

\subsection{Assigning three distinct real spectral values}\label{ss:3 DRSV}

This section discusses the GCRRID property for the quasipolynomial $\Delta$. It involves assigning the maximal number of distinct real spectral values $s_3 < s_2 < s_1$ to $\Delta$ and proving that $s_1$ is a dominant root. We shall then use this result to establish necessary and sufficient conditions to ensure that $s_1$ is negative, which is essential to the exponential stability of equation \eqref{eq:NDE}. Finally, we explicitly characterize the spectrum of $\Delta$ when it admits three real roots $s_3<s_2<s_1$.

The first main result of this section provides a comprehensive proof of \cite[Theorem~5]{schmoderer2023boundary}.

\begin{theorem}[Dominance of a real root]\label{thm:dominance of a real root 3 rr}
    Assume that $\Delta$ admits three real spectral values $s_3<s_2<s_1$. Then, the spectral value $s_1$ is a strictly dominant root of $\Delta$.
\end{theorem}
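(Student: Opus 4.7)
The plan is to argue by contradiction: suppose there exists a root $s_\ast\neq s_1$ of $\Delta$ with $\mathrm{Re}(s_\ast)\ge s_1$, and derive a contradiction. If $s_\ast$ is real, then $\Delta$ would possess at least four real zeros when counted together with $s_1,s_2,s_3$, contradicting the P\'olya--Szeg\"o bound (the degree of $\Delta$ is three). Hence $s_\ast$ must be non-real; write $s_\ast=x_0+i\omega_0$ with $\omega_0\neq 0$ and $x_0\ge s_1$. Since $\Delta$ admits exactly three real roots, Lemma~\ref{lem:PS bound} applied with $\eta=0$ furnishes the sharp lower bound $|\omega_0|\ge 2\pi/\tau$.

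Next, from $\Delta(s_\ast)=0$ I rewrite $(s_\ast+a)e^{\tau s_\ast}=-(\alpha s_\ast+\beta)$ and take squared moduli on both sides; after simplification this yields the key identity
\begin{equation*}
\omega_0^{\,2}\bigl(e^{2\tau x_0}-\alpha^{2}\bigr)=(\alpha x_0+\beta)^{2}-(x_0+a)^{2}e^{2\tau x_0}.
\end{equation*}
By Lemma~\ref{lem:properties on alpha and beta GCRRID}, one has $0<\alpha<e^{\tau x_0}$ whenever $x_0\ge s_1$, so the factor $e^{2\tau x_0}-\alpha^{2}$ is strictly positive. Combining this with $\omega_0^{\,2}\ge 4\pi^{2}/\tau^{2}$ forces the real-variable estimate
\begin{equation*}
(\alpha x_0+\beta)^{2}-(x_0+a)^{2}e^{2\tau x_0}\;\ge\;\tfrac{4\pi^{2}}{\tau^{2}}\bigl(e^{2\tau x_0}-\alpha^{2}\bigr),\qquad(\star)
\end{equation*}
which I will show is impossible for any $x_0\ge s_1$, delivering the desired contradiction.

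To disprove $(\star)$, I rely on the factorization
\begin{equation*}
(\alpha x+\beta)^{2}-(x+a)^{2}e^{2\tau x}\;=\;-\phi(x)\,\phi_{-}(x),
\end{equation*}
where $\phi(x)=(x+a)e^{\tau x}+\alpha x+\beta$ (whose real zeros are exactly $s_{1},s_{2},s_{3}$) and $\phi_{-}(x)=(x+a)e^{\tau x}-(\alpha x+\beta)$. Since $\phi''(x)=\tau e^{\tau x}(2+\tau(x+a))$ changes sign at most once, Rolle's theorem gives that $\phi'$ has at most two real zeros, so $\phi$ has no zero beyond $s_{1}$; together with $\phi(x)\to+\infty$ as $x\to+\infty$, this yields $\phi(x)>0$ on $(s_{1},\infty)$. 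Setting
\begin{equation*}
\Psi(x):=\tfrac{4\pi^{2}}{\tau^{2}}\bigl(e^{2\tau x}-\alpha^{2}\bigr)+\phi(x)\phi_{-}(x),
\end{equation*}
one must show $\Psi(x)>0$ on $[s_{1},\infty)$. The boundary values are favorable: $\Psi(s_{1})=\tfrac{4\pi^{2}}{\tau^{2}}(e^{2\tau s_{1}}-\alpha^{2})>0$ since $\phi(s_{1})=0$, and $\Psi(x)\to+\infty$ as $x\to+\infty$ because $\phi(x)\phi_{-}(x)\sim (x e^{\tau x})^{2}$. The main obstacle is ruling out intermediate zeros of $\Psi$, where $\phi_{-}$ could be sufficiently negative to overwhelm the positive buffer; this sign and monotonicity analysis on $(s_{1},\infty)$ is the technical heart of the proof, and I expect it to rest on the explicit coefficient expressions from Lemma~\ref{lem:coeffs} together with the identity $s_{1}+a=-\zeta<0$ noted in the Remark following Lemma~\ref{lem:properties on alpha and beta GCRRID}, which constrain how deeply $\phi\phi_{-}$ can dip below zero compared to the exponentially growing term $\tfrac{4\pi^{2}}{\tau^{2}}(e^{2\tau x}-\alpha^{2})$.
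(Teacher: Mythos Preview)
Your setup matches the paper exactly: the contradiction hypothesis, the observation that a fourth real root is excluded by the P\'olya--Szeg\"o bound, the invocation of Lemma~\ref{lem:PS bound} with $\eta=0$ to get $|\omega_0|\ge 2\pi/\tau$, and the squared-modulus identity
\[
\omega_0^{2}\bigl(e^{2\tau x_0}-\alpha^{2}\bigr)=(\alpha x_0+\beta)^{2}-(x_0+a)^{2}e^{2\tau x_0}
\]
are all correct and are precisely what the paper does (after substituting $\beta=-\alpha s_1-e^{\tau s_1}(a+s_1)$, your right-hand side becomes the paper's $(e^{\tau s_1}(a+s_1)-\alpha(x-s_1))^{2}-(a+x)^{2}e^{2\tau x}$).

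The gap is that you stop at the hard step. You explicitly say that ruling out interior zeros of $\Psi$ ``is the technical heart of the proof'' and that you ``expect it to rest on'' the coefficient formulas, but you do not carry it out. Your factorization $-\phi\phi_{-}$ is neat, yet controlling where $\phi_{-}$ is negative and by how much is not straightforward, and the hints you list (Lemma~\ref{lem:coeffs}, $s_1+a=-\zeta$) do not by themselves close the argument.

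The paper avoids this difficulty by aiming for a \emph{stronger} bound that is, counterintuitively, easier to prove: instead of showing $\omega_0^{2}<4\pi^{2}/\tau^{2}$, it shows $\omega_0^{2}<1/\tau^{2}$. Define
\[
\chi(x)=(e^{\tau s_1}(a+s_1)-\alpha(x-s_1))^{2}-(a+x)^{2}e^{2\tau x}-\tau^{-2}\bigl(e^{2\tau x}-\alpha^{2}\bigr).
\]
With the constant $\tau^{-2}$ (rather than $4\pi^{2}\tau^{-2}$), one checks directly that
\[
\chi''(x)=-2e^{2\tau x}\Bigl(2\bigl(\tau(a+x)+1\bigr)^{2}+e^{-2\tau x}(e^{2\tau x}-\alpha^{2})\Bigr)<0
\]
on $[s_1,\infty)$, that $\chi'(s_1)<0$ (it factors as a sum of two negative terms using only $0<\alpha<e^{\tau s_1}$), and that $\chi(s_1)=-\tau^{-2}(e^{2\tau s_1}-\alpha^{2})<0$. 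Concavity plus $\chi'(s_1)<0$ gives $\chi'<0$ on $[s_1,\infty)$, hence $\chi<0$ there, hence $\omega_0^{2}<1/\tau^{2}$, contradicting $|\omega_0|\ge 2\pi/\tau$. No factorization or sign analysis of $\phi_{-}$ is needed; only Lemma~\ref{lem:properties on alpha and beta GCRRID} is used. This is the missing idea in your proposal.
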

\begin{proof}
Fix $\tau>0$. It follows from Lemma~\ref{lem:coeffs} that $a+s_1<0$ and $\alpha>0$. We argue by contradiction. Assume that there exists $s_0:=x+i\omega\in\C$ such that $\Delta(s_0) = 0$ and $x\ge s_1$. In particular, $\omega\neq 0$, since we have already assigned three real spectral values $s_3<s_2<s_1$ to $\Delta$. From $\Delta(s_1) = 0$, one gets $\beta = -\alpha s_1-e^{\tau s_1}(a+s_1)$. Therefore, $ \Delta(s_0) = 0$ if, and only if,
\begin{equation}\label{eq:delta s 0 vanishes  brouillon}
    e^{\tau s_0}s_0+e^{\tau s_0}a+\alpha s_0 = \alpha s_1+e^{\tau s_1}(a+s_1).
\end{equation}
By taking the real and imaginary parts in \eqref{eq:delta s 0 vanishes  brouillon}, one gets
 \begin{equation}\label{eq:s 0 and its cc brouillon}
      \begin{split}
 	(a+x)\cos(\tau \omega)-\omega\sin(\tau\omega)&=e^{-\tau (x-s_1)}(a+s_1)-\alpha(x-s_1)e^{-\tau x}\\
(a+x)\sin(\tau \omega)+\omega\cos(\tau\omega)&=-\alpha\omega e^{-\tau x}.
  \end{split}
 \end{equation}
 By squaring each equality in \eqref{eq:s 0 and its cc  brouillon} and adding them yields
 \begin{equation}\label{eq:omega positive real dominance}
     \omega^2=\frac{(e^{\tau s_1}(a+s_1)-\alpha(x-s_1))^2-(a+x)^2e^{2\tau x}}{e^{2\tau x}-\alpha^2}
 \end{equation}
  which is well defined for every $x\ge s_1$ by Lemma~\ref{lem:properties on alpha and beta GCRRID}.
Let us prove that $\omega^2$ given by \eqref{eq:omega positive real dominance} satisfies $\omega^2<1/\tau^2$. To do so, define for all $x\ge s_1$ the function
 \begin{equation}
     \chi(x) = (e^{\tau s_1}(a+s_1)-\alpha(x-s_1))^2-(a+x)^2e^{2\tau x}-\frac{(e^{2\tau x}-\alpha^2)}{\tau^2}.
 \end{equation}
We want to prove that $\chi(x)<0$ for every $x\ge s_1$. On the one hand, one has
\begin{eqnarray}\label{eq:chi 0 and limit at infinity}
     \chi(s_1)=-(e^{2\tau s_1}-\alpha^2)\tau^{-2}<0,\quad\lim_{x\to\infty}\chi(x)  =-\infty
 \end{eqnarray}
 owing to Lemma~\ref{lem:properties on alpha and beta GCRRID}. On the other hand, function $\chi$ is infinitely derivable on $[s_1,\infty)$, so that for all $x\ge s_1$,
\begin{equation}\label{eq:second derivative of chi}
\hspace{-0.3cm}\chi''(x)= -2e^{2\tau x}\left(2\left(\tau(a+x)+1\right)^2+e^{-2\tau x}(e^{2\tau x}-\alpha^2)\right)<0
\end{equation}
since the two terms between the big brackets of $\chi''(x)$ are positive by Lemma~\ref{lem:properties on alpha and beta GCRRID}. One also has for all $x\ge s_1$,
 \begin{equation}\label{eq:first derivative of chi}
     \begin{aligned}
        \chi'(x) &= -2\alpha((a+s_1)e^{\tau s_1}-\alpha(x-s_1))-2\tau (a+x)^2e^{2\tau x}\\
        & -2(a+x)e^{2\tau x}-2\tau^{-1}e^{2\tau x}
    \end{aligned}
\end{equation}
\begin{equation*}
    \begin{cases}
          \begin{aligned}
        \chi'(s_1)&= -2\tau\left((a+s_1)e^{\tau s_1}+\frac{\alpha+e^{\tau s_1}}{2\tau}\right)^2 -\frac{(e^{\tau s_1}-\alpha)(\alpha+3 e^{\tau s_1})}{2\tau}
    \end{aligned}\\
    \lim\limits_{x\to\infty}\chi'(x)  =-\infty.
    \end{cases}
\end{equation*}
Here, $\chi'(s_1)<0$, as the sum of two negative terms by Lemma~\ref{lem:properties on alpha and beta GCRRID}. It follows from \eqref{eq:second derivative of chi} and \eqref{eq:first derivative of chi} that
\begin{equation}\label{eq:sign first derivative of chi}
    \chi'(x)<0 \quad(\forall x\ge s_1).
\end{equation}
Finally, by combining \eqref{eq:sign first derivative of chi} and \eqref{eq:chi 0 and limit at infinity}, one gets that 
\begin{equation}\label{eq:sign of chi}
    \chi(x)<0 \quad(\forall x\ge s_1).
\end{equation}
The latter is equivalent to $\omega^2<\tau^{-2}$ with $\omega^2$ defined in \eqref{eq:omega positive real dominance}. So, we have shown that if $x+i\omega$ is a root of $\Delta$ with $x\ge s_1$ and $\omega\neq 0$, then $|\omega|<\tau^{-1}$. The latter is inconsistent since one has necessarily $|\omega|\ge 2\pi/\tau$ owing to Lemma~\ref{lem:PS bound}. 
\end{proof}
\begin{remark}\label{rmk:observation on alpha 3RR}
    It is worth noting that the key property on $\alpha$ required in the proof of Theorem~\ref{thm:dominance of a real root 3 rr} is that $e^{2\tau x}-\alpha^2\ge 0$ for all $x\ge s_1$ which is satisfied owing to Lemma~\ref{lem:properties on alpha and beta GCRRID}.
\end{remark}
The second main result provides the necessary and sufficient conditions on the delay $\tau$ and the coefficient $a$ to guarantee that a dominant root $s_1$ is negative when three real spectral values $s_3<s_2<s_1$ are formally assigned to $\Delta$.

\begin{theorem}[Negativity of a dominant root]\label{thm:negativity generic case}
     Assume that $\Delta$ admits three real spectral values $s_3<s_2<s_1$. Then, $s_1<0$ if, and only if, there is a unique $\tau_{*}>0$ such that $a(\tau_{*})=0$, i.e.,
    \begin{equation}
        a(\tau)\begin{cases}
            =0\quad\mbox{if}\quad\tau=\tau_{*}\cr
            <0\quad\mbox{if}\quad\tau<\tau_{*}\cr
            >0\quad\mbox{if}\quad\tau>\tau_{*}.
        \end{cases}
    \end{equation}
    More precisely, it holds 
    \begin{equation}
       \hspace{-0.25cm} s_1\begin{cases}
            =-\zeta(\tau)\quad\mbox{if}\quad\tau=\tau_{*}\cr
            >-\zeta(\tau)\quad\mbox{if}\quad\tau<\tau_{*}\cr
            <-\zeta(\tau)\quad\mbox{if}\quad\tau>\tau_{*},
        \end{cases}\quad 
        \zeta(\tau):=\frac{F_{-\tau,1}(s_2,s_3)}{\tau F_{-\tau,2}(s_1,s_2,s_3)}.
    \end{equation}

\end{theorem}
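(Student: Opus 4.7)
First, using Lemma~\ref{lem:coeffs}, rewrite $a(\tau) = -s_1 - \zeta(\tau)$ with $\zeta(\tau) > 0$ on $(0,\infty)$. The second dichotomy (comparing $s_1$ with $-\zeta(\tau)$) is literally the sign of $-a(\tau)$, so both trichotomies reduce to locating the real zeros of $a$ on $(0,\infty)$. The backward direction is immediate: if $a(\tau_*) = 0$, then $\zeta(\tau_*) = -s_1 > 0$, forcing $s_1 < 0$.

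For the forward direction, the plan is to convert $\zeta(\tau) = -s_1$ into an exponential-sum equation amenable to Descartes' rule. Using the closed-form expressions~\eqref{eq:recursive function 1} and~\eqref{eq:F 1} for $F_{-\tau,1}$ and $F_{-\tau,2}$, clearing denominators, and using the identities $A - s_1 = -s_2$ and $B - s_1 = -s_3$ (where $A := s_1-s_2$, $B := s_1-s_3$, $C := s_2-s_3$ are strictly positive), the condition $a(\tau) = 0$ becomes
\[
\mathcal{E}(\tau) := A s_3\, e^{\tau s_3} - B s_2\, e^{\tau s_2} + C s_1\, e^{\tau s_1} = 0.
\]
A direct computation (cyclic cancellation) gives $\mathcal{E}(0) = 0$ always, and Taylor expansion yields $\mathcal{E}'(0) = ABC > 0$, so $\tau = 0$ is always a simple trivial zero of $\mathcal{E}$.

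Next, I would apply Lemma~\ref{lem:Descartes' rule of signs} to the coefficient triple $(A s_3, -B s_2, C s_1)$. It exhibits exactly two sign changes when $s_1$ and $s_3$ share a strict sign (all three roots positive, or all three negative), and at most one sign change in the mixed-sign cases $s_3 < 0 \le s_1$. In the mixed regime, the Descartes bound $Z \le 1$ together with the trivial root at $\tau = 0$ rules out any $\tau_* > 0$. In the two sign-matched regimes I would locate the second Descartes zero via end behavior: $\mathcal{E}(\tau) \sim ABC\,\tau$ is positive just to the right of the origin, while as $\tau \to +\infty$ the dominant term is $C s_1 e^{\tau s_1}$, whose sign equals $\mathrm{sign}(s_1)$. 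Hence when all roots are positive, $\mathcal{E}(\tau) \to +\infty$, no sign change occurs on $(0,\infty)$, and the second Descartes zero must lie in $(-\infty,0)$; when all roots are negative, $\mathcal{E}(\tau) \to 0^-$, producing a unique sign change on $(0,\infty)$ and hence the desired $\tau_* > 0$. This proves the equivalence.

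The sign trichotomy of $a(\tau)$ in the $s_1 < 0$ regime then follows by continuity from $a(0^+) = -\infty$ (using $\zeta(\tau) \sim 2/\tau$ as $\tau \to 0^+$), $a(+\infty) = -s_1 > 0$ (since $\zeta \to 0$), and uniqueness of the zero $\tau_*$; this forces $a < 0$ on $(0,\tau_*)$ and $a > 0$ on $(\tau_*,+\infty)$. The statement comparing $s_1$ with $-\zeta(\tau)$ is merely the statement for $-a(\tau)$. The main technical hurdle I expect is the asymptotic sign analysis that pinpoints on which half-line the non-trivial Descartes root lies, since Descartes' rule alone bounds the count but not the location; the end-behavior arguments of $\mathcal{E}$ near $0^+$ and $+\infty$ are essential for isolating the $s_1 < 0$ case.
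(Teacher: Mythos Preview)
Your proposal is correct and follows essentially the same route as the paper: both reduce $a(\tau)=0$ to the vanishing of the same exponential sum (your $\mathcal{E}$ is the paper's $F$), invoke Lemma~\ref{lem:Descartes' rule of signs} on its coefficient triple, exploit the trivial root at $\tau=0$, and read off the endpoint asymptotics $a(0^+)=-\infty$, $a(+\infty)=-s_1$ from the integral forms of $F_{-\tau,1}$ and $F_{-\tau,2}$. The only cosmetic difference is that you run an exhaustive sign-case analysis on $(s_1,s_2,s_3)$ to locate the non-trivial Descartes zero, whereas the paper handles the converse in one line via $a(\tau_*)=0\Rightarrow s_1=-\zeta(\tau_*)<0$, making your treatment of the $s_1\ge 0$ cases redundant (though not wrong).
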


\begin{proof}
     Let us assume that $s_1<0$. Since $\tau\in(0,\;\infty)\mapsto a(\tau)$ is continuous with respect to $\tau$, one may use the intermediate value theorem. Combining \eqref{eq:a1} and \eqref{eq:recursive function 1}, one gets
   \begin{equation*}
        a(\tau) = \frac{e^{-\tau(s_1- s_3)}(s_1-s_2)s_3+e^{-\tau(s_1- s_2)}(s_3-s_1)s_2+(s_2-s_3)s_1}{e^{-\tau(s_1- s_3)}(s_2-s_1)+e^{-\tau(s_1- s_2)}(s_1-s_3)+(s_3-s_2)}.
    \end{equation*}
Consequently, $a(\tau)\to -s_1>0$ when $\tau\to\infty$. Next, \eqref{eq:recursive function}, \eqref{eq:F 1}, and the continuity of $F_{-\tau, 1}$ and $F_{-\tau, 2}$ with respect to $\tau$ imply that $a(\tau)\to-\infty$ as $\tau\to 0$. It follows that there exists at least one $\tau_{*}>0$ such that $a(\tau_{*})=0$. To show that $\tau_{*}$ is unique, one applies Lemma~\ref{lem:Descartes' rule of signs}. We observe that $a(\tau)=0$ if, and only if, its numerator vanishes at $\tau$. Now, let
  \begin{equation}
      F(\tau) = e^{\tau s_3}(s_1-s_2)s_3+e^{\tau s_2}(s_3-s_1)s_2+e^{\tau s_1}(s_2-s_3)s_1
  \end{equation}
  be the numerator of $e^{\tau s_1}a(\tau)$. Then, $F$ is analytic in $\tau$, and one has from $s_3<s_2<s_1$ that $(s_1-s_2)s_3<0$, $(s_3-s_1)s_2>0$ and $(s_2-s_3)s_1<0$. Let $C$ denote the number of sign changes in the sequence of real numbers $(s_1-s_2)s_3$, $(s_3-s_1)s_2$, and $(s_2-s_3)s_1$, then $C=2$. Similarly, if $Z$ represents the number of real zeros of the entire function $F$, then $C-Z=2-Z\ge 0$ and therefore $Z=2$ by Lemma~\ref{lem:Descartes' rule of signs}. Since $F(0)=0$ and $F(\tau_{*})=0$, the uniqueness of $\tau_{*}>0$ follows.
  
  Conversely, assume the existence of a unique $\tau_{*}>0$ such that $a(\tau_{*})=0$. One immediately deduces from \eqref{eq:recursive function}, \eqref{eq:a1} and \eqref{eq:F 1} that
  \begin{equation*}
      s_1 = -\frac{F_{-\tau_{*},1}(s_2,s_3)}{\tau_{*} F_{-\tau_{*},2}(s_1,s_2,s_3)}<0.\qedhere
  \end{equation*}
\end{proof}
\begin{remark}
    Theorem~\ref{thm:negativity generic case} states that the exponential stability of equation \eqref{eq:NDE} may be achieved with a prescribed decay rate, even though $a\le 0$. Time-domain techniques based on a Lyapunov functional and LMI do not cover this case. See, for instance, \cite{lien2000stability,hale2013introduction,fridman2001new}.
\end{remark}
In the case of equidistributed real spectral values $s_3<s_2<s_1$, the delay $\tau_{*}>0$ that enables the design of $s_1<0$ may be explicitly computed, as is stated hereafter.
\begin{corollary}\label{cor:equidistributed three real roots}
    Assume that the quasipolynomial $\Delta$ admits three equidistributed real spectral values $s_k = s_1-(k-1)d$, for $d>0$. Then, $s_1<0$ if, and only if, the delay $\tau_{*}>0$ in Theorem~\ref{thm:dominance of a real root 3 rr} reads
    \begin{equation}
        \tau_{*} = \frac{1}{d}\ln\left(\frac{s_3}{s_1}\right)>0.
    \end{equation}
\end{corollary}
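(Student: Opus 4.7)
The plan is to specialize Theorem~\ref{thm:negativity generic case} to the equidistributed configuration $s_3=s_1-2d$, $s_2=s_1-d$ and solve explicitly for the delay $\tau_*>0$ at which $a(\tau)$ vanishes. Since Theorem~\ref{thm:negativity generic case} already established that $s_1<0$ is equivalent to the existence (and uniqueness) of such a $\tau_*$, it is enough to exhibit $\tau_*$ as an explicit function of $s_1$ and $d$, and then check that the sign condition $\tau_*>0$ is equivalent to $s_1<0$.

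First I would plug the equidistribution relations $s_1-s_3=2d$, $s_1-s_2=s_2-s_3=d$ into the quotient expression for $a(\tau)$ displayed in the proof of Theorem~\ref{thm:negativity generic case}. The common factor $d$ cancels in the numerator, and the condition $a(\tau)=0$ reduces to the scalar identity
\begin{equation*}
s_3\, e^{-2d\tau} - 2 s_2\, e^{-d\tau} + s_1 = 0.
\end{equation*}
Setting $u=e^{-d\tau}>0$ and using $2s_2=s_1+s_3$, this becomes the quadratic $s_3 u^{2}-(s_1+s_3)u+s_1=0$, which factors as
\begin{equation*}
(u-1)(s_3 u - s_1)=0.
\end{equation*}
The root $u=1$ corresponds to the trivial value $\tau=0$ and must be discarded; the remaining root $u=s_1/s_3$ yields $\tau_*=\frac{1}{d}\ln(s_3/s_1)$ whenever this expression makes sense.

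Finally I would check the sign. Since $s_3<s_1$, we have $\tau_*>0$ if and only if $s_3/s_1>1$, which forces $s_3$ and $s_1$ to have the same sign with $|s_3|>|s_1|$; as $s_3<s_1$, this is equivalent to $s_1<0$. Combining this equivalence with Theorem~\ref{thm:negativity generic case} yields the statement, and simultaneously confirms uniqueness of $\tau_*$ (already known from the theorem, and here re-obtained from the explicit factorization). I do not expect any substantial obstacle: the only slightly delicate point is justifying that the spurious root $u=1$ of the quadratic is harmless, which is immediate since $\tau=0$ is outside the admissible range $\tau>0$ used throughout the analysis of $\Delta$.
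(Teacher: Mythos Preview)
Your argument is correct. It follows essentially the same high-level strategy as the paper---invoke Theorem~\ref{thm:negativity generic case} and solve $a(\tau_*)=0$ in the equidistributed case---but the computational route differs. The paper works through the integral representations $F_{-\tau,1}$ and $F_{-\tau,2}$: it evaluates both explicitly for $s_2=s_1-d$, $s_3=s_1-2d$, obtains the relation $s_1=\dfrac{-2d}{e^{\tau_* d}-1}$, and inverts it. You instead take the exponential-quotient form of $a(\tau)$ displayed in the proof of Theorem~\ref{thm:negativity generic case}, reduce the vanishing of its numerator to the quadratic $s_3u^2-(s_1+s_3)u+s_1=0$ in $u=e^{-d\tau}$, and factor it as $(u-1)(s_3u-s_1)$. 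Your route is slightly more elementary and has the pleasant side effect of making the spurious root $\tau=0$ and the uniqueness of the positive root visible directly from the factorization, without appealing back to the Descartes-rule argument in Theorem~\ref{thm:negativity generic case}. One small wording quibble: the common factor $d$ cancels between numerator \emph{and} denominator (the denominator becomes $-d(e^{-d\tau}-1)^2$), not just in the numerator; this does not affect the argument since the denominator never vanishes for $\tau>0$.
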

\begin{proof}
		By Theorem~\ref{thm:negativity generic case}, $s_1<0$, if and only if, there exists one, and only one, $\tau_{*}>0$ such that $a(\tau_{*})=0$, i.e.,
		\begin{equation}\label{eq:23}
			s_1 = -\frac{F_{-\tau_{*},1}(s_2,s_3)}{\tau_{*} F_{-\tau_{*},2}(s_1,s_2,s_3)}.
		\end{equation}
		Letting $s_2=s_1-d$ and $s_3=s_1-2d$ with $d>0$, one finds
		\begin{equation}\label{eq:24}
			F_{-\tau_{*},2}(s_1,s_2,s_3) = \frac{e^{\tau_{*}s_1}(1-e^{-\tau_{*}d})^2}{2\tau_{*}^2d^2}
		\end{equation}
        \begin{equation}\label{eq:25}
            F_{-\tau_{*},1}(s_2,s_3) = \frac{e^{\tau_{*}s_1}e^{-\tau_{*}d}(1-e^{-\tau_{*}d})}{\tau_{*}d}.
        \end{equation}
		By injecting \eqref{eq:24}-\eqref{eq:25} into \eqref{eq:23}, one gets
		\begin{equation}
			s_1 = \frac{-2d}{e^{\tau_{*}d}-1}\quad\Longleftrightarrow\quad \tau_{*} = \frac{1}{d}\ln\left(\frac{s_3}{s_1}\right)>0,
		\end{equation}
		since $s_3<s_1<0$.
	\end{proof}
\begin{figure}[t]
		\centering
		\includegraphics[width=\linewidth]{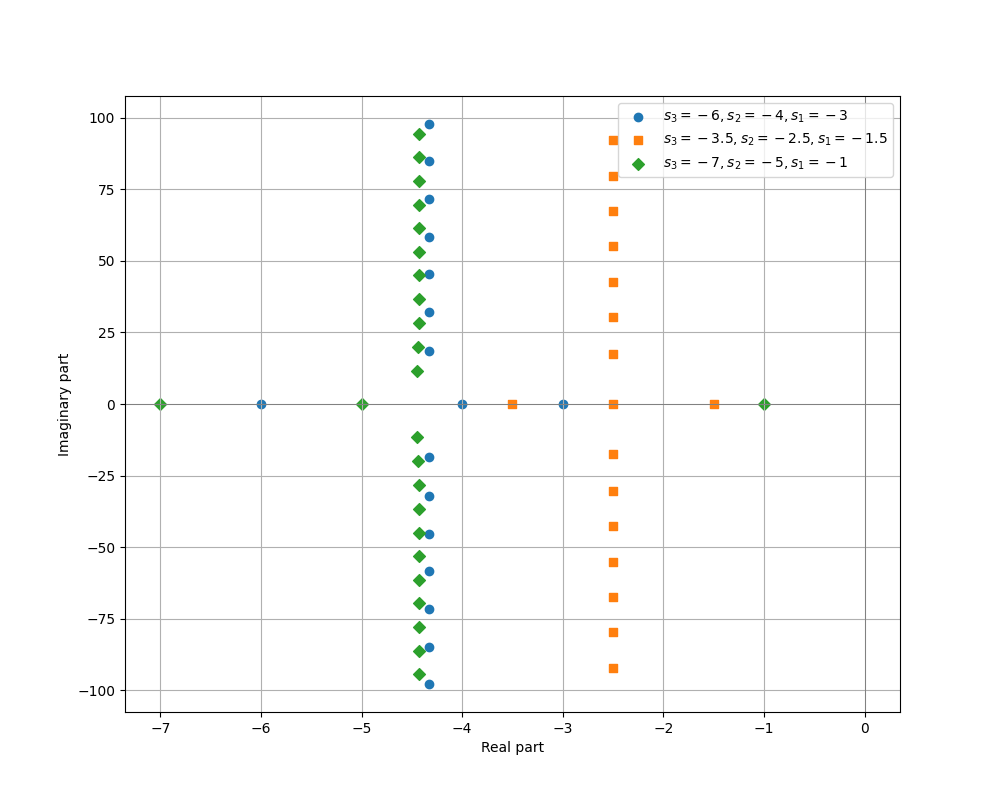}
  \vspace{-0.8cm}
		\caption{Spectrum of the quasipolynomial $\Delta$ for various parameters when $\tau=1$. This is obtained by assigning three real spectral values $s_3<s_2<s_1$ to $\Delta$ and computing each case's system parameters $a$, $\alpha$, and $\beta$. 
  }
		\label{fig:spectrums-Delta-3RR}
\end{figure}

In the specific case where a single real spectral value 
$s_3=s_2=s_1$ is assigned to $\Delta$, the GMID is demonstrated in \cite{boussaada2022generic}. Furthermore, the analysis fully characterizes the remaining spectrum of $\Delta$ as
\begin{equation}\label{eq:spectrum 1 triple rr}
    s = s_1+i\frac{\omega}{\tau},\quad\mbox{where}\quad \tan\left(\frac{\omega}{2}\right) = \frac{\omega}{2}.
\end{equation}

In the following, we fully characterize the spectrum of $\Delta$ when it admits three real spectral values.

\begin{theorem}\label{thm:remaining spectrum general 3RR}
   Assume that $\Delta$ admits three real spectral values $s_3<s_2<s_1$. Then, the remaining spectrum of $\Delta$ is given by
    \begin{equation}\label{eq:spectrum distribution 3rr}
        s = s_2+\frac{\ln(\theta)}{\tau}+i\frac{\omega}{\tau}
    \end{equation}
if and only if
 \begin{equation}\label{eq:xi identity 3rr}
       \ln(\theta) = \xi\frac{(\theta-1)}{2\theta}.
   \end{equation}
The imaginary part $\omega\neq 0$ solves
    \begin{equation}\label{eq:equation imaginary part 3rr}
        \frac{(\theta+1)}{2\theta}\tan\left(\frac{\omega}{2}\right) = \frac{\omega}{\xi}
    \end{equation}
    where $\theta:=\alpha e^{-\tau s_2}$ with $0<\alpha<e^{\tau s_1}$ given by \eqref{eq:alpha} and
    \begin{equation}\label{eq:xi}
        \xi:=\frac{F_{-\tau,1}(s_1,s_3)}{F_{-\tau,2}(s_1,s_2,s_3)}>0.
    \end{equation}
\end{theorem}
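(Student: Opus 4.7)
My plan is to reduce $\Delta(s)=0$ to a single-variable transcendental equation by exploiting the known real root $s_{2}$, impose the prescribed ansatz on the non-real roots, and then symmetrise to decouple real and imaginary parts. First I would substitute $s = s_{2} + w/\tau$ into $\Delta(s) = 0$ and multiply the result by $e^{w}$; the identity $\Delta(s_{2})=0$ eliminates $\beta$ via $e^{-\tau s_{2}}(\alpha s_{2}+\beta) = -(s_{2}+a)$ and yields
\[
\tau(s_{2}+a)(e^{w}-1) + w\bigl(e^{w}+\theta\bigr) = 0,\qquad \theta := \alpha e^{-\tau s_{2}}.
\]
The formula for $a$ in Lemma~\ref{lem:coeffs} then identifies $\tau(s_{2}+a) = -\xi$, so the reduced equation becomes $\xi(1-e^{w}) + w(e^{w}+\theta) = 0$. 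Its real roots are $w = \tau(s_{j}-s_{2})$ for $j=1,2,3$, so any additional zero must have nonzero imaginary part.

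Next I would impose the ansatz $w = \ln\theta + i\omega$ with $\omega\neq 0$, which makes $e^{w} = \theta e^{i\omega}$ and hence $e^{w}+\theta = \theta(e^{i\omega}+1) = 2\theta\cos(\omega/2)\,e^{i\omega/2}$. Multiplying the reduced equation by $e^{-i\omega/2}$ gives
\[
\xi\bigl[(1-\theta)\cos(\omega/2) - i(1+\theta)\sin(\omega/2)\bigr] + 2\theta(\ln\theta + i\omega)\cos(\omega/2) = 0,
\]
whose real and imaginary parts read respectively $\cos(\omega/2)\bigl[\,2\theta\ln\theta - \xi(\theta-1)\,\bigr] = 0$ and $2\theta\omega\cos(\omega/2) = \xi(\theta+1)\sin(\omega/2)$. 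A one-line check rules out $\cos(\omega/2)=0$: this would force $\xi(\theta+1)\sin(\omega/2)=0$, and since $\xi(\theta+1) > 0$ it contradicts $\sin^{2}+\cos^{2}=1$. Dividing both equations by $\cos(\omega/2)$ then delivers the two announced identities $\ln\theta = \xi(\theta-1)/(2\theta)$ and $(\theta+1)/(2\theta)\tan(\omega/2) = \omega/\xi$.

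Each manipulation above is reversible, so the converse implication follows at once, establishing the \emph{if and only if} of the statement. The principal non-routine ingredient, and where I expect the main obstacle to lie, is the choice of symmetrising factor $e^{-i\omega/2}$: it collapses $e^{w}+\theta$ into a purely real multiple of $e^{i\omega/2}$ precisely because the ansatz $e^{\rho} = \theta$ turns $e^{w/2} = \sqrt{\theta}\,e^{i\omega/2}$ and $\theta e^{-w/2} = \sqrt{\theta}\,e^{-i\omega/2}$ into complex conjugates, and it is this decoupling that forces the identity $\ln\theta = \xi(\theta-1)/(2\theta)$ as the natural consistency condition for the parametrisation \eqref{eq:spectrum distribution 3rr} to describe the complex spectrum.
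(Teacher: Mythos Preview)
Your reduction and computation are correct: substituting $w=\tau(s-s_{2})$ and using Lemma~\ref{lem:coeffs} does give $\xi(1-e^{w})+w(e^{w}+\theta)=0$, and the symmetrising factor $e^{-i\omega/2}$ neatly decouples the real and imaginary parts when $w=\ln\theta+i\omega$. What you have actually established is that \emph{a point of the prescribed form $w=\ln\theta+i\omega$ is a root} if and only if both \eqref{eq:xi identity 3rr} and \eqref{eq:equation imaginary part 3rr} hold.

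The theorem, however, asserts more: it says that \emph{the entire non-real spectrum} lies on the line $\Re(w)=\ln\theta$ if and only if \eqref{eq:xi identity 3rr} holds (compare with the corollary immediately following, which says that when \eqref{eq:xi identity 3rr} fails, some root lies strictly off the line). Your reversibility claim handles only individual points already on the line; it does not rule out non-real roots with $\Re(w)\neq\ln\theta$ when \eqref{eq:xi identity 3rr} is satisfied. That is the substantive ``if'' direction, and your argument does not address it.

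The paper closes this gap by a mechanism absent from your proposal. It first lets the imaginary part tend to infinity in the modulus relation $|z-\xi|=|\theta z+\xi|\,e^{-\Re z}$ to see that all non-real roots are asymptotic to $\Re(z)=\ln\theta$. Then, assuming \eqref{eq:xi identity 3rr}, it checks the functional identity
\[
\Delta'(2\ln\theta-\bar{z})=-\frac{e^{\bar{z}}}{\theta}\,\Delta'(\bar{z}),
\]
so the zero set is invariant under reflection across the vertical line $\Re(z)=\ln\theta$. Reflection symmetry together with the asymptotic alignment forces the remaining spectrum to sit exactly on that line; only after this step does the substitution $z=\ln\theta+i\omega$ yield \eqref{eq:equation imaginary part 3rr}. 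To complete your proof you need some device of this kind, not just the reversibility of the algebra on the line.
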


As an immediate consequence of Theorem~\ref{thm:remaining spectrum general 3RR}, one has the following.
\begin{corollary}
     Assume that $\Delta$ admits three real spectral values $s_3<s_2<s_1$. If \eqref{eq:xi identity 3rr} is not satisfied, then the remaining spectrum of $\Delta$ forms a chain asymptotic to the vertical line
     \begin{equation}
         \Re(s) = \ln(\alpha)
     \end{equation}
   where $0<\alpha<e^{\tau s_1}$ is given by \eqref{eq:alpha}. More precisely, there exists $s_0\in\C$ such that the following holds
   \begin{equation}
       \Delta(s_0) = 0\quad\mbox{and}\quad\Re(s_0)\neq\ln(\alpha). 
   \end{equation}
\end{corollary}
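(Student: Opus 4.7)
The plan is to split the argument into two essentially independent pieces. First, I would establish the asymptotic alignment of the non-real spectrum with the vertical line $\Re(s)=\ln(\alpha)/\tau$ by a direct analysis of the characteristic equation. Second, I would deduce the existence of at least one off-line root as an immediate contrapositive of Theorem~\ref{thm:remaining spectrum general 3RR}.

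For the asymptotic chain, I would rewrite the characteristic equation $\Delta(s)=0$ in the form
\begin{equation*}
  (1+\alpha e^{-\tau s})\,s \;+\; (a+\beta e^{-\tau s}) \;=\; 0,
\end{equation*}
so as to expose the neutral principal part $1+\alpha e^{-\tau s}$. Pick any sequence $\{s_n\}_{n\ge 1}$ of complex roots of $\Delta$ with $|s_n|\to\infty$; since $\Delta$ is of neutral type the real parts of its zeros are uniformly bounded, so necessarily $|\Im(s_n)|\to\infty$. Dividing the displayed equation by $s_n$ forces $1+\alpha e^{-\tau s_n}\to 0$, i.e.\ $e^{\tau s_n}\to -\alpha$. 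Because $\alpha>0$ by Lemma~\ref{lem:properties on alpha and beta GCRRID}, taking moduli and arguments then yields $\Re(s_n)\to\ln(\alpha)/\tau$ and $\tau\Im(s_n)\equiv \pi\pmod{2\pi}$ asymptotically. To promote this convergence into a bona fide chain, I would apply a Rouch\'e argument on small disks of fixed radius centred at $\ln(\alpha)/\tau+i(2k+1)\pi/\tau$ for large $|k|$, comparing $\Delta$ against $P(s):=(1+\alpha e^{-\tau s})\,s$, whose zeros on the half plane $\Re(s)\ge \ln(\alpha)/\tau-1$ are exactly these targets.

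For the existence of a root off the asymptotic line, I would argue by contradiction. If every complex root $s$ of $\Delta$ satisfied $\Re(s)=\ln(\alpha)/\tau$, then, setting $\theta:=\alpha e^{-\tau s_2}$, each such root would admit the representation $s=s_2+\ln(\theta)/\tau+i\omega/\tau$ appearing in Theorem~\ref{thm:remaining spectrum general 3RR}. The ``only if'' direction of that theorem would then force relation \eqref{eq:xi identity 3rr} to hold --- contradicting the standing hypothesis of the corollary. Hence at least one $s_0\in\C$ with $\Delta(s_0)=0$ must satisfy $\Re(s_0)\neq\ln(\alpha)/\tau$.

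The main obstacle is the Rouch\'e step. On the boundary of a disk of fixed small radius around $\ln(\alpha)/\tau+i(2k+1)\pi/\tau$, the perturbation satisfies $|\Delta(s)-P(s)|=|a+\beta e^{-\tau s}|\le |a|+|\beta|\,e^{-\tau\Re(s)}$, which is bounded independently of $k$, while $|P(s)|\ge c\,|s|$ with $|s|\sim |2k+1|\pi/\tau$ growing linearly by a first-order expansion of $1+\alpha e^{-\tau s}$ at the target. For all $|k|$ sufficiently large the comparison therefore succeeds, producing exactly one root of $\Delta$ per target disk and establishing the asymptotic chain.
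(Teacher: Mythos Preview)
Your proof is correct and follows the paper's approach: the corollary is stated there as an immediate consequence of Theorem~\ref{thm:remaining spectrum general 3RR}, with the asymptotic alignment already derived inside that theorem's proof (the $|\omega_0|\to\infty$ limit of \eqref{eq:mediatrice non-equidistributed RR}) and the off-line root following by exactly the contrapositive you invoke. Your Rouch\'e step adds rigor that the paper leaves implicit, and you correctly place the asymptotic line at $\Re(s)=\ln(\alpha)/\tau$, which silently repairs what appears to be a missing factor $1/\tau$ in the paper's stated corollary (indeed $s_2+\ln(\theta)/\tau=\ln(\alpha)/\tau$ since $\theta=\alpha e^{-\tau s_2}$).
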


Before proving Theorem~\ref{thm:remaining spectrum general 3RR}, let us illustrate it in the specific case where the quasipolynomial $\Delta$ has three equidistributed real spectral values.

\begin{corollary}\label{cor:remaining spectrum equidistributed RR}
Assume that $\Delta$ admits three equidistributed real spectral values $s_k = s_1-(k-1)d$, for $d>0$ and $k=1, 2, 3$. Then, the remaining spectrum of $\Delta$ is characterized by
\begin{equation}\label{eq:spectrum 3rr equidistributed}
    s = s_2+i\frac{\omega}{\tau}
\end{equation}
where the imaginary part $\omega\neq 0$ solves
\begin{equation}
    \tan\left(\frac{\omega}{2}\right) = \frac{\omega}{\xi(\tau, d)},\qquad\xi(\tau, d) = \tau d\coth\left(\frac{\tau d}{2}\right).
\end{equation}
Here $\coth$ is the cotangent hyperbolic function.
\end{corollary}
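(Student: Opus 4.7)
The plan is to invoke Theorem~\ref{thm:remaining spectrum general 3RR} with $s_2=s_1-d$ and $s_3=s_1-2d$, and to show that, in the equidistributed setting, the auxiliary parameter $\theta=\alpha e^{-\tau s_2}$ collapses to $\theta=1$ while $\xi$ simplifies to $\tau d\coth(\tau d/2)$. Once these two reductions are established, the relation $\ln(\theta)=\xi(\theta-1)/(2\theta)$ in \eqref{eq:xi identity 3rr} is trivially satisfied, so \eqref{eq:spectrum distribution 3rr} yields $s=s_2+i\omega/\tau$, and the factor $(\theta+1)/(2\theta)$ in \eqref{eq:equation imaginary part 3rr} equals $1$, leaving exactly $\tan(\omega/2)=\omega/\xi(\tau,d)$.

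First, I would compute $\alpha$ by specializing formula \eqref{eq:alpha}. The denominator $F_{-\tau,2}(s_1,s_2,s_3)$ is already given by \eqref{eq:24}, and the same calculation applied to the shifted triple $(s_1+s_2,s_1+s_3,s_2+s_3)=(2s_1-d,2s_1-2d,2s_1-3d)$ (whose consecutive differences are again $d,d$) gives, via \eqref{eq:recursive function 1},
\[
F_{-\tau,2}(s_1+s_2,s_1+s_3,s_2+s_3)=\frac{e^{2\tau s_1}e^{-\tau d}(1-e^{-\tau d})^2}{2\tau^2 d^2}.
\]
Dividing these two expressions yields $\alpha=e^{\tau(s_1-d)}=e^{\tau s_2}$, so $\theta=1$. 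Consequently $\ln\theta=0$, both sides of \eqref{eq:xi identity 3rr} vanish, and the real part of the remaining spectrum reduces to $s_2$.

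Next, I would evaluate $\xi$ from \eqref{eq:xi}. Using \eqref{eq:F 1}, one has
\[
F_{-\tau,1}(s_1,s_3)=\frac{e^{\tau s_1}-e^{\tau(s_1-2d)}}{2\tau d}=\frac{e^{\tau s_1}(1-e^{-2\tau d})}{2\tau d},
\]
and dividing by \eqref{eq:24} gives
\[
\xi=\frac{\tau d(1-e^{-2\tau d})}{(1-e^{-\tau d})^2}=\frac{\tau d(1+e^{-\tau d})}{1-e^{-\tau d}}=\tau d\coth\!\bigl(\tau d/2\bigr),
\]
after factoring $1-e^{-2\tau d}=(1-e^{-\tau d})(1+e^{-\tau d})$ and multiplying numerator and denominator by $e^{\tau d/2}$.

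The only step that requires care is the collapse $\theta=1$: one must verify that the shifted triple $(s_1+s_2,s_1+s_3,s_2+s_3)$ factors a clean multiple of $e^{\tau s_1}(1-e^{-\tau d})^2$ out of the numerator of \eqref{eq:recursive function 1}. This is a short symbolic computation and constitutes the bulk of the argument; the rest is algebraic simplification and direct substitution into Theorem~\ref{thm:remaining spectrum general 3RR}.
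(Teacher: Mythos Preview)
Your proposal is correct and follows exactly the paper's approach: specialize Theorem~\ref{thm:remaining spectrum general 3RR} by showing that in the equidistributed case $\theta=\alpha e^{-\tau s_2}=1$ and $\xi=\tau d\coth(\tau d/2)$, so that \eqref{eq:xi identity 3rr} is trivially satisfied and \eqref{eq:spectrum distribution 3rr}--\eqref{eq:equation imaginary part 3rr} reduce to the stated formulas. Your write-up in fact supplies the explicit computations that the paper's proof leaves to the reader; the only minor point is that your citation of \eqref{eq:24} refers to a formula stated at the specific delay $\tau_*$, but the identity obviously holds for every $\tau>0$.
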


\begin{proof}
    Since $s_3<s_2<s_1$ are equidistributed, one has from $\theta=\alpha e^{-\tau s_2}$, \eqref{eq:alpha} and \eqref{eq:xi} that $\theta = 1$ and $\xi:=\xi(\tau, d) = \tau d\coth\left(\frac{\tau d}{2}\right)$. Since $\theta=1$, and $\xi>0$, then \eqref{eq:xi identity 3rr} is satisfied, and the result follows by \eqref{eq:spectrum distribution 3rr} and \eqref{eq:equation imaginary part 3rr}.
\end{proof}

{\begin{remark}
Since $\xi(\tau, d)\to 2$ as $d\to 0$, \eqref{eq:spectrum 1 triple rr} coincides with \eqref{eq:spectrum 3rr equidistributed}, and Corollary~\ref{cor:remaining spectrum equidistributed RR} confirms the intuitive concept that the GMID property is the limiting case of the GCRRID property. 
  We illustrated in Figure~\ref{fig:function Theta} values of $d:=s_1-s_2$ and $\delta:=s_1-s_3$ where \eqref{eq:xi identity 3rr} is satisfied. We also refer to Figure~\ref{fig:spectrums-Delta-3RR}, where we depicted the quasipolynomial $\Delta$'s spectrum in three scenarios.
\end{remark}}

\begin{proof}[\textit{Proof} of Theorem~\ref{thm:remaining spectrum general 3RR}]
\label{s:remaining spectrum non-equidistributed RR}
   Using \eqref{eq:beta}, \eqref{eq:alpha} and \eqref{eq:a1}, one has
   \begin{equation}
       \alpha =\theta e^{\tau s_2},\qquad  a=-s_2-\frac{\xi}{\tau},\qquad\beta = -\alpha s_2+\frac{\xi}{\tau}e^{\tau s_2}.
   \end{equation}
Let $d:=s_1-2$, $\delta:=s_1-s_3$, and consider the change of functions 
   \begin{equation}\label{eq:Delta prime}
       \Delta'(z) = \tau\Delta\left(s_2+\frac{z}{\tau}\right)=z-\xi+e^{-z}(\theta z+\xi)\qquad (z\in\C)
   \end{equation}
so that $\Delta'$ has three real roots $-\tau(\delta-d)$, $0$ and $\tau d$. In particular, $z\in\C$ is a spectral value of $\Delta'$ if, and only if, $s_2+z/\tau$ is a spectral value of $\Delta$. Moreover, since $s_1$ is a dominant root of $\Delta$, then $\tau d$ is a dominant root of $\Delta'$. Let now $z_0=x_0+i\omega_0$ be a complex non-real spectral value of $\Delta'$. Then, $x_0<\tau d$ and $|\omega_0|\ge 2\pi$ by Lemma~\ref{lem:PS bound}. From $\Delta'(z_0)=0$, one obtains
    \begin{equation}\label{eq:mediatrice non-equidistributed RR}
        \frac{z-\xi}{\theta z+\xi}=-e^{-z}\quad\Longrightarrow\quad \sqrt{\frac{(x_0-\xi)^2+\omega_0^2}{(\theta x_0+\xi)^2+\theta^2\omega_0^2}}= e^{-x_0}.
    \end{equation}
Letting $|\omega_0|\to\infty$ in \eqref{eq:mediatrice non-equidistributed RR}, one deduces that
\begin{equation}
    e^{-x_0} = {1}/{\theta},\qquad\mbox{since}\qquad 0<\theta<e^{\tau(s_1-s_2)}=e^{\tau d}.
\end{equation}
It follows that $x_0 = \ln(\theta)$, and the rest of the spectrum of $\Delta'$ is either located on the vertical line $\Re(z)=\ln(\theta)$ or forms a chain asymptotic to $\Re(z)=\ln(\theta)$. But, if \eqref{eq:xi identity 3rr} is satisfied, then \eqref{eq:Delta prime} yields
\begin{eqnarray}
    \hspace{-0.6cm}\Delta'(2\ln(\theta)-\overline{z}_0)\hspace{-0.3cm}&=&\hspace{-0.3cm}2\ln(\theta)-\overline{z}_0-\xi+\frac{e^{\overline{z}_0}}{\theta^2}\left(\theta(2\ln(\theta)-\overline{z}_0)+\xi\right)\nonumber\\
    &=&\hspace{-0.3cm}-\overline{z}_0-\frac{\xi}{\theta}+\frac{e^{\overline{z}_0}}{\theta}\left(-\overline{z}_0+\xi\right)=-\frac{e^{\overline{z}_0}}{\theta}\Delta'(\overline{z}_0)
\end{eqnarray}
for every $z_0\in\C$. Therefore, since $\Delta'$ has real coefficients, a complex number $\overline{z_0}$ is a spectral value of $\Delta'$ if, and only if, $2\ln(\theta)-\overline{z_0}$, the reflection of $z_0$ across the vertical line $\Re(z)=\ln(\theta)$, is a spectral value of $\Delta'$. As a result, the remaining spectrum of $\Delta'$ exists along the vertical line $\Re(z)=\ln(\theta)$. One deduces that the remaining spectrum of $\Delta$ exists along the vertical line $\Re(s)=s_2+\ln(\theta)/\tau$. Substituting $z = \ln(\theta)+i\omega$ for some real number $\omega\neq 0$ into $\Delta'(z)=0$, yields
\begin{equation}
    \ln(\theta)+i\omega = \xi\frac{(\theta-1)}{2\theta}+i\xi\frac{(\theta+1)}{2\theta}\tan\left(\frac{\omega}{2}\right)
\end{equation}
after careful computations. Then, \eqref{eq:xi identity 3rr} and \eqref{eq:equation imaginary part 3rr} follow, completing the proof of the theorem.
\end{proof}

\begin{figure}[t]
		 \hspace{-0.6cm}
  \includegraphics[width=1.25\linewidth]{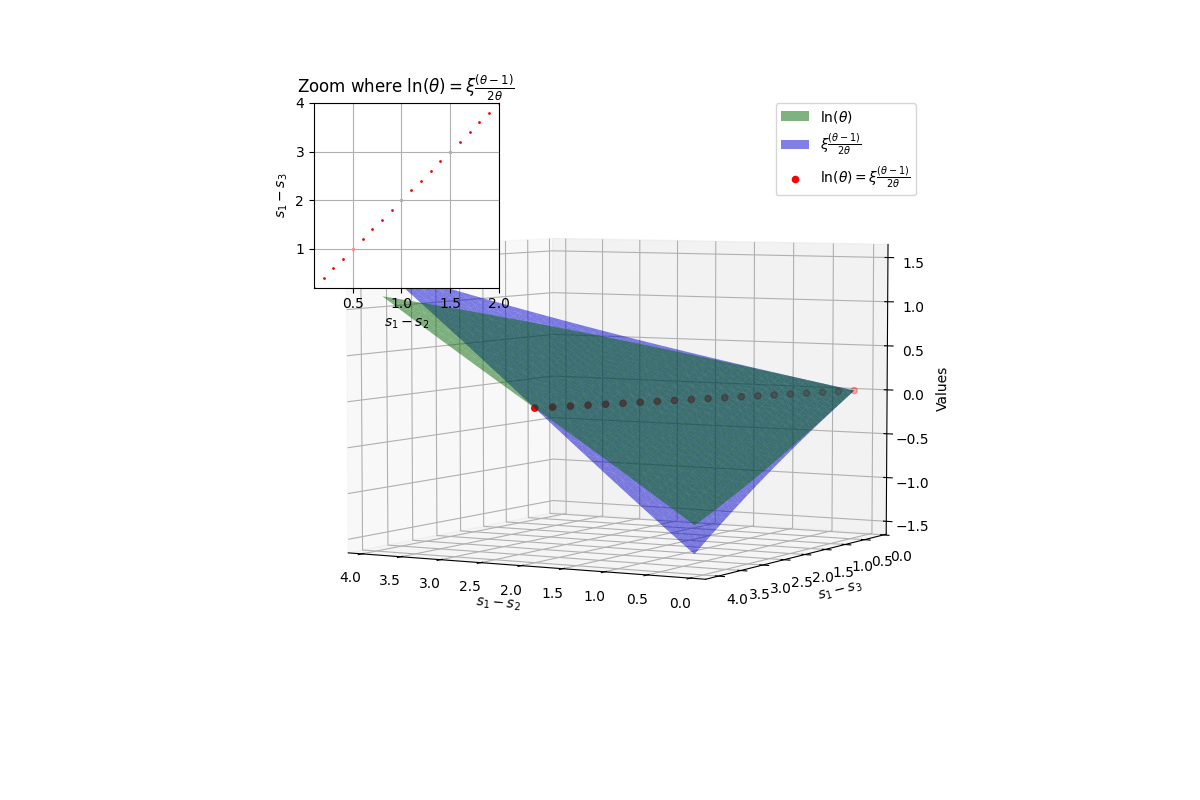}
  \vspace{-2cm}
		\caption{
        {Plot of $\ln(\theta)$ and $\xi\frac{(\theta-1)}{2\theta}$ as functions of $d:=s_1-s_2$ and $\delta:=s_1-s_3$. Here $\theta:=\alpha e^{-\tau s_2}$ with $0<\alpha<e^{\tau s_1}$ given by \eqref{eq:alpha} and $\xi$ is defined by \eqref{eq:xi}. The \textit{red} dots characterize values of $d$ and $\delta$ where \eqref{eq:xi identity 3rr} is satisfied.}
  }
		\label{fig:function Theta}
\end{figure}

\subsection{Assigning two distinct real spectral values}\label{ss:2 DRSV}

This section discusses the  ICRRID property of the quasipolynomial $\Delta$. The aim is to assign two real spectral values $s_2<s_1$ to 
$\Delta$ and to find necessary and sufficient conditions on the coefficients of $\Delta$ guaranteeing that $s_1$ is a dominant root.  Finally, we explicitly characterize the spectrum of $\Delta$ when it admits exactly two real roots $s_2<s_1$. 

{
Inspired from \cite[Theorem~6]{schmoderer2023boundary}, we find these conditions in terms of $(a+s_1)/(s_1-s_2)$. The challenge relies on the scenario where a third real spectral value $x$ coexists in the spectrum of $\Delta$, in which case the coefficients $a$, $\alpha$, and $\beta$ are given in Lemma~\ref{lem:coeffs}. Therefore, the greatest between $s_2$, $s_1$, and $x$ is a dominant root by Theorem~\ref{thm:dominance of a real root 3 rr}. Since one wants $s_1$ to be a dominant root of $\Delta$, we are looking for the necessary and sufficient conditions on $(a+s_1)/(s_1-s_2)$ ensuring that $x<s_2<s_1$ or $s_2<x<s_1$. To this end, one deduces from \eqref{eq:a1} that 
\begin{equation}
    \frac{a+s_1}{s_1-s_2} = -\frac{(s_1-x)(e^{\tau s_2}-e^{\tau x})}{e^{\tau x}(s_1-s_2)+e^{\tau s_2}(x-s_1)+e^{\tau s_1}(s_2-x)}
\end{equation}
for $\forall x\in\R\bs\{s_1,s_2\}$, which is negative owing to \eqref{eq:recursive function} and \eqref{eq:recursive function 1}. On the other hand, introduce for all $x\in\R\bs\{s_1,s_2\}$, 
\begin{equation}
    \varphi(x) = \frac{(s_1-x)(e^{\tau s_2}-e^{\tau x})}{e^{\tau x}(s_1-s_2)+e^{\tau s_2}(x-s_1)+e^{\tau s_1}(s_2-x)}.
\end{equation}
Then, $\varphi(x)>0$ for every $x<s_2<s_1$ or $s_2<x<s_1$ or $x>s_1$, and satisfies
 \begin{equation*}
      \begin{split}
 	\lim_{x\to-\infty}\varphi(x)&=\frac{1}{e^{\tau(s_1-s_2)}-1}\\
\lim_{x\to s_2}\varphi(x)&=\frac{\tau(s_1-s_2)}{e^{\tau(s_1-s_2)}-(1+\tau(s_1-s_2))}\\
\lim_{x\to s_1}\varphi(x)&=\frac{e^{\tau(s_1-s_2)}-1}{e^{\tau(s_1-s_2)}(\tau(s_1-s_2)-1)+1}.
  \end{split}
 \end{equation*}
Moreover, letting for all $x\in\R\bs\{s_1,s_2\}$,
\begin{equation*}
\begin{alignedat}{2}
    \psi(x)&=\tau^2(s_2-x)(s_1-x)(s_1-s_2)(F_{-\tau,1}(x,s_1)F_{-\tau,1}(x,s_2)\\
    &\quad -e^{x}F_{-\tau,1}(s_1,s_2))
\end{alignedat} 
\end{equation*}
one finds
\begin{equation*}
    \varphi'(x)=\frac{\psi(x)}{[e^{x \tau} (-s_1 + s_2) + 
e^{s_2 \tau} (s_1 - x) + e^{s_1 \tau} (-s_2 + x)]^2}
\end{equation*}
thanks to the integral representation \eqref{eq:F 1}. Using \cite[Lemma~7]{bedouhene2020real}, one finds that
\[
F_{-\tau,1}(u,v)-F_{-\tau,1}(u,w) = \tau(v-w)F_{-\tau,2}(u,v,w)>0\Leftrightarrow v>w
\]
for all $\tau>0$ and $(u, v, w)\in\R^3$. Therefore, for a fixed $u\in\R$, $v\mapsto F_{-\tau,1}(u,v)$ is positive and increasing  on $\R$. One deduces that $\varphi'(x)>0$ for every $x<s_2<s_1$ or $s_2<x<s_1$ or $x>s_1$, and that $\varphi$ is a positive increasing function for $x<s_2<s_1$ or $s_2<x<s_1$ or $x>s_1$. Therefore, if $s_2<s_1$, then
\[
\varphi(x)\ge \frac{1}{e^{\tau(s_1-s_2)}-1},\qquad (\forall x\in\R\bs\{s_1,s_2\}).
\]

We proved the following.
\begin{theorem} \label{thm:condition of the domiancy 2 rr with 1RR suplementary}
    Assume that the quasipolynomial $\Delta$ admits two real spectral values $s_2 < s_1$. Then, a third real spectral value $x$ coexists in the spectrum of $\Delta$ if and only if
    \begin{equation}
      \Lambda_3 :=  \frac{a + s_1}{s_1 - s_2} \le \frac{-1}{e^{\tau(s_1 - s_2)} - 1}.
    \end{equation}
    Furthermore,
    \begin{enumerate}
        \item $x < s_2 < s_1$ if and only if
        \begin{equation*}\label{eq:condition of the domiancy 2 rr with 1RR suplementary smaller}
        \frac{-\tau(s_1 - s_2)}{e^{\tau(s_1 - s_2)} - (1 + \tau(s_1 - s_2))} < \Lambda_3 \le \frac{-1}{e^{\tau(s_1 - s_2)} - 1}
        \end{equation*}
        \item $s_2 < x < s_1$ if and only if
        \begin{equation*}\label{eq:condition of the domiancy 2 rr with 1RR suplementary medium}
        \hspace{-0.5cm}\frac{1 - e^{\tau(s_1 - s_2)}}{e^{\tau(s_1 - s_2)} (\tau(s_1 - s_2) - 1) + 1} < \Lambda_3 < \frac{-\tau(s_1 - s_2)}{e^{\tau(s_1 - s_2)} - (1 + \tau(s_1 - s_2))}
        \end{equation*}
        \item $s_2 < s_1 < x$ if and only if
        \begin{equation*}\label{eq:condition of the domiancy 2 rr with 1RR suplementary greater}
        \Lambda_3 \le \frac{1 - e^{\tau(s_1 - s_2)}}{e^{\tau(s_1 - s_2)} (\tau(s_1 - s_2) - 1) + 1}.
        \end{equation*}
    \end{enumerate}
\end{theorem}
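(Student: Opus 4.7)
The plan is to reduce the statement to an inversion problem for the function $\varphi$ already analysed in the paragraphs preceding the theorem. First, I would observe that, with $s_2<s_1$ fixed as real roots of $\Delta$, the coefficient triple $(a,\alpha,\beta)$ depends on one remaining scalar degree of freedom; imposing a further real root $x\notin\{s_1,s_2\}$ completes the Cramer system of Lemma~\ref{lem:coeffs} and uniquely fixes $(a,\alpha,\beta)$. The algebraic manipulation carried out just above the theorem shows that this is equivalent to the identity $\Lambda_3=-\varphi(x)$. Conversely, any real $x\notin\{s_1,s_2\}$ solving $\Lambda_3=-\varphi(x)$ yields, through the same Cramer system, coefficients compatible with three distinct real roots $x, s_2, s_1$. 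Hence coexistence of a third real spectral value is equivalent to $\Lambda_3$ lying in the range of $-\varphi$.

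Next, I would assemble the properties of $\varphi$ already established: strict positivity on $\R\setminus\{s_1,s_2\}$, continuity and strict monotonicity on each of the three open intervals $(-\infty,s_2)$, $(s_2,s_1)$, $(s_1,\infty)$, together with the four one-sided limits $L_0:=1/(e^{\tau(s_1-s_2)}-1)$ at $-\infty$, $L_2:=\tau(s_1-s_2)/(e^{\tau(s_1-s_2)}-(1+\tau(s_1-s_2)))$ at $s_2$, $L_1:=(e^{\tau(s_1-s_2)}-1)/(e^{\tau(s_1-s_2)}(\tau(s_1-s_2)-1)+1)$ at $s_1$, and $+\infty$ at $+\infty$. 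Monotonicity then forces the ordering $L_0<L_2<L_1$, so that $\varphi$ induces a strictly increasing continuous bijection from each of the three intervals onto $(L_0,L_2)$, $(L_2,L_1)$, and $(L_1,\infty)$, respectively.

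Applying the intermediate value theorem interval by interval then converts the location of $x$ into an inclusion for $-\Lambda_3$: $x<s_2$ iff $-\Lambda_3\in(L_0,L_2)$, $s_2<x<s_1$ iff $-\Lambda_3\in(L_2,L_1)$, and $x>s_1$ iff $-\Lambda_3\in(L_1,\infty)$. Negating yields items (1)--(3) of the theorem, whose union produces the overall coexistence criterion $\Lambda_3<-L_0$, which is the first assertion.

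The main obstacle is essentially none: the substantive analytic work — the four explicit limits of $\varphi$, the proof that $\varphi'>0$ on each interval via \cite[Lemma~7]{bedouhene2020real}, and the derivation of the key identity $\Lambda_3=-\varphi(x)$ — has already been completed in the preamble to the theorem. The one subtle point is reconciling the non-strict inequalities appearing on the right of each item of the theorem with the strict inequalities produced by the argument above; the equality cases $\Lambda_3\in\{-L_0,-L_2,-L_1\}$ correspond to limiting configurations where $x$ escapes to $-\infty$ or merges with $s_2$ or $s_1$ into a multiple real root, which lie outside the three-distinct-roots setting that underlies $\varphi$ and should be interpreted accordingly.
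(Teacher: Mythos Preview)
Your proposal is correct and follows precisely the paper's own approach: the text immediately preceding the theorem derives the identity $\Lambda_3=-\varphi(x)$, computes the limits of $\varphi$ at $-\infty$, $s_2$, and $s_1$, and proves $\varphi'>0$ on each of the three intervals, after which the theorem is stated as an immediate consequence. Your additional remarks---the limit $\varphi(x)\to+\infty$ as $x\to+\infty$, the resulting ordering $L_0<L_2<L_1$, and the caveat about the non-strict endpoints corresponding to degenerate (merged or escaping) roots---make explicit what the paper leaves to the reader.
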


Let us now investigate the necessary and sufficient conditions on $(a+s_1)/(s_1-s_2)$ guaranteeing the dominance of a simple real spectral value $s_1$ when exactly two real spectral values $s_2<s_1$ are assigned to the quasipolynomial $\Delta$.
\begin{lemma}\label{lem:ICRRID}
    The quasipolynomial $\Delta$ admits exactly two distinct real spectral values $s_2$ and $s_1$ if, and only if $a\in\R$,
    \begin{equation}\label{eq:beta 2 rr}
        \beta = -\alpha s_1-e^{\tau s_1}(a+s_1)=-\alpha s_2-e^{\tau s_2}(a+s_2).
    \end{equation}
    \begin{equation}\label{eq:a and alpha}
        \alpha = \frac{-(a+s_1)e^{\tau s_1}+(a+s_2)e^{\tau s_2}}{s_1-s_2}.
    \end{equation}
\end{lemma}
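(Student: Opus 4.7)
The plan is to observe that requiring both $s_1$ and $s_2$ to be roots of $\Delta$ amounts to a $2\times 2$ linear system in the unknowns $(\alpha,\beta)$, which is a Cramer system since $s_1\neq s_2$; no deeper machinery is needed.

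First I would prove the forward direction. Assuming $s_1$ and $s_2$ are real spectral values of $\Delta$, multiplying each identity $\Delta(s_i)=0$ by $e^{\tau s_i}$ for $i=1,2$ gives
\[
(a+s_i)e^{\tau s_i}+\alpha s_i+\beta=0.
\]
Solving each of these relations for $\beta$ yields the pair of expressions displayed in \eqref{eq:beta 2 rr}. Equating them produces
\[
\alpha(s_2-s_1)=(a+s_1)e^{\tau s_1}-(a+s_2)e^{\tau s_2},
\]
from which \eqref{eq:a and alpha} follows after dividing by $s_2-s_1\neq 0$. The coefficient $a$ remains a free real parameter, since for every fixed $a\in\R$ the linear system in $(\alpha,\beta)$ has a unique solution.

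For the reverse direction, I would substitute the formulas \eqref{eq:beta 2 rr} and \eqref{eq:a and alpha} back into the definition of $\Delta$ and verify directly that $\Delta(s_1)=0$ and $\Delta(s_2)=0$. This amounts to checking that the two forms of $\beta$ in \eqref{eq:beta 2 rr} are compatible, which is precisely the content of \eqref{eq:a and alpha}.

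I do not anticipate any real obstacle, since the whole argument is elementary linear algebra on two equations in two unknowns. The only subtlety worth flagging is that the lemma does not preclude the coexistence of a third real root together with $s_1$ and $s_2$; whether such a third root appears is precisely governed by Theorem~\ref{thm:condition of the domiancy 2 rr with 1RR suplementary}. Accordingly, the word ``exactly'' in the statement should be read as ``these two specific distinct values'' rather than ``two and no more''.
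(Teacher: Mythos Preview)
Your approach is correct and matches what the paper does: the paper states this lemma without proof, implicitly treating the formulas \eqref{eq:beta 2 rr}--\eqref{eq:a and alpha} as the immediate consequence of solving the $2\times 2$ linear system $\Delta(s_1)=\Delta(s_2)=0$ in $(\alpha,\beta)$ with $a$ free, exactly as you describe. Your flag on the word ``exactly'' is well taken; the paper resolves that ambiguity separately in Remark~\ref{rmk:ICCRID exactly 2RR}, where the additional inequality coming from Theorem~\ref{thm:condition of the domiancy 2 rr with 1RR suplementary} is imposed to rule out a third coexisting real root.
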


\begin{remark}\label{rmk:ICCRID exactly 2RR}
  It is an immediate consequence of Theorem~\ref{thm:condition of the domiancy 2 rr with 1RR suplementary} that, exactly two real spectral values $s_2<s_1$ coexist in the spectrum of $\Delta$, if and only if, the following inequality holds
\begin{equation}\label{eq:a priori inequalities}
   \frac{-1}{e^{\tau(s_1-s_2)}-1}< \frac{a+s_1}{s_1-s_2}.
\end{equation}

\end{remark}
The next lemma is a key ingredient in proving our second main result.
\begin{lemma}\label{lem:properties on alpha and beta ICRRID}
Assume that $\Delta$ admits exactly two real spectral values $s_2 < s_1$. Then, the following conditions hold.
\begin{equation}\label{eq:condition of the domiancy 2 rr}
  \frac{-1}{e^{\tau(s_1-s_2)} - 1} < \frac{a + s_1}{s_1 - s_2} < 1 \quad \Longleftrightarrow  \quad  -e^{\tau s_1} < \alpha < 0.
\end{equation}
In particular, the necessary part of \eqref{eq:condition of the domiancy 2 rr} implies that
\begin{equation}\label{eq:asymptotic alpha 2RR}
    -e^{\tau x}<\alpha<0 \qquad \forall x \ge s_1.
\end{equation}
Additionally, if $s_1 \le 0$, then $\beta \le 0$ whenever
\begin{equation}\label{eq:asymptotic beta 2RR}
    0 \le \frac{a + s_1}{s_1 - s_2} < 1.
\end{equation}
\end{lemma}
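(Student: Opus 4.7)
The plan is to reduce everything to a single compact identity for $\alpha$ in terms of the scalar $u:=(a+s_1)/(s_1-s_2)$. Starting from the formula \eqref{eq:a and alpha} in Lemma~\ref{lem:ICRRID}, I would substitute $a+s_1=u(s_1-s_2)$ and $a+s_2=(u-1)(s_1-s_2)$, cancel the factor $s_1-s_2$, and factor out $e^{\tau s_2}$ to arrive at
\[
\alpha \;=\; -e^{\tau s_2}\bigl(uE+1\bigr),\qquad E:=e^{\tau(s_1-s_2)}-1>0.
\]

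From this identity both halves of the first equivalence in \eqref{eq:condition of the domiancy 2 rr} fall out immediately: since $e^{\tau s_2}>0$ and $E>0$, one reads off $\alpha<0\Longleftrightarrow uE+1>0\Longleftrightarrow u>-1/E$, while $\alpha>-e^{\tau s_1}\Longleftrightarrow e^{\tau s_2}(uE+1)<e^{\tau s_1}\Longleftrightarrow uE+1<E+1\Longleftrightarrow u<1$. The extension \eqref{eq:asymptotic alpha 2RR} is then automatic: for every $x\ge s_1$ the monotonicity of $x\mapsto e^{\tau x}$ gives $-e^{\tau x}\le -e^{\tau s_1}<\alpha<0$, so no extra work is needed.

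For the sign of $\beta$, I would use the first expression in \eqref{eq:beta 2 rr}, namely $\beta=-\alpha s_1-e^{\tau s_1}(a+s_1)$. Under the hypothesis $0\le u<1$ (in particular $u>-1/E$), the identity above already guarantees $\alpha<0$, and $u\ge 0$ together with $s_1-s_2>0$ forces $a+s_1\ge 0$. If moreover $s_1\le 0$, then $-\alpha s_1$ is (positive)$\times$(non-positive), hence $\le 0$, and $-e^{\tau s_1}(a+s_1)\le 0$ manifestly; summing the two non-positive contributions yields $\beta\le 0$.

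The only step that demands any care is the algebraic reduction producing the compact formula for $\alpha$, but this is essentially a one-line rearrangement and, once in hand, collapses the three inequalities into elementary linear conditions on $u$. Consequently I do not anticipate any genuine obstacle: the lemma is best viewed as a change-of-variable statement repackaging the sign information of $\alpha$ and $\beta$ in terms of the single parameter $u=(a+s_1)/(s_1-s_2)$.
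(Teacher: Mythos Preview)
Your proof is correct and follows essentially the same route as the paper: both derive $\alpha=-\dfrac{a+s_1}{s_1-s_2}(e^{\tau s_1}-e^{\tau s_2})-e^{\tau s_2}$ from \eqref{eq:a and alpha} and read off the equivalence \eqref{eq:condition of the domiancy 2 rr} as a pair of linear inequalities in $(a+s_1)/(s_1-s_2)$, then use monotonicity of the exponential for \eqref{eq:asymptotic alpha 2RR} and the formula $\beta=-\alpha s_1-e^{\tau s_1}(a+s_1)$ together with $\alpha<0$, $a+s_1\ge 0$, $s_1\le 0$ for the sign of $\beta$. Your introduction of the shorthand $u$ and $E$ is a cosmetic repackaging of the paper's argument, not a different method.
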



\begin{proof}
 Firstly, from \eqref{eq:a and alpha}, one gets
 \begin{equation}
     \alpha = -\frac{a+s_1}{s_1-s_2}(e^{\tau s_1}-e^{\tau s_2})-e^{\tau s_2}.
 \end{equation}
 Therefore, the equivalence \eqref{eq:condition of the domiancy 2 rr} holds owing to
 \begin{equation*}
      -(e^{\tau s_1}-e^{\tau s_2})-e^{\tau s_2}<\alpha<\frac{e^{\tau s_1}-e^{\tau s_2}}{e^{\tau(s_1-s_2)}-1}-e^{\tau s_2}\Leftrightarrow -e^{\tau s_1}<\alpha<0.
 \end{equation*}
 Inequality \eqref{eq:asymptotic alpha 2RR} is a consequence of the necessary part of \eqref{eq:condition of the domiancy 2 rr} and that $x\in\R\mapsto e^{x}$ is increasing. 
Finally, if $s_1\le 0$, then $\beta\le 0$ follows immediately from \eqref{eq:beta 2 rr}, $\alpha<0$ and \eqref{eq:asymptotic beta 2RR}.
\end{proof}
}

One can now prove the following result, which gives necessary and sufficient conditions for the dominance of a real spectral value when $\Delta$ admits exactly two real spectral values.
\begin{theorem}\label{thm:dominance of a real root 2 rr}
Assume that exactly two real spectral values $s_2<s_1$ coexist in the spectrum of $\Delta$. Then, $s_1$ is a strictly dominant root of $\Delta$ if, and only if, either the sufficient or the necessary part of \eqref{eq:condition of the domiancy 2 rr}  are satisfied.
\end{theorem}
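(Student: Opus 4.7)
The plan is to follow closely the strategy of the proof of Theorem \ref{thm:dominance of a real root 3 rr}, exploiting the observation in Remark \ref{rmk:observation on alpha 3RR}: the only property of $\alpha$ truly used there is the inequality $e^{2\tau x}-\alpha^2>0$ on $[s_1,\infty)$. Lemma \ref{lem:properties on alpha and beta ICRRID}, through \eqref{eq:asymptotic alpha 2RR}, supplies exactly this inequality under the condition $-e^{\tau s_1}<\alpha<0$. Moreover, by the same lemma the two forms of the hypothesis in \eqref{eq:condition of the domiancy 2 rr} are equivalent, so I will phrase the argument in terms of $\alpha$.

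For the sufficiency, assume $-e^{\tau s_1}<\alpha<0$ and suppose by contradiction that some $s_0=x+i\omega$ is a zero of $\Delta$ with $x\ge s_1$; because exactly two real zeros exist, $\omega\neq 0$. The next step is to eliminate $\beta$ via $\Delta(s_1)=0$, to split $\Delta(s_0)=0$ into real and imaginary parts as in \eqref{eq:s 0 and its cc brouillon}, and then to square and add in order to recover the same formula \eqref{eq:omega positive real dominance} for $\omega^2$. Introducing the same auxiliary function
\[
\chi(x)=(e^{\tau s_1}(a+s_1)-\alpha(x-s_1))^2-(a+x)^2e^{2\tau x}-\frac{e^{2\tau x}-\alpha^2}{\tau^2},
\]
I would then verify $\chi(s_1)<0$, $\chi''(x)<0$, and $\chi'(s_1)<0$ line by line as in the three-root proof. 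The only delicate point is the factor $(e^{\tau s_1}-\alpha)(\alpha+3e^{\tau s_1})$ appearing inside $\chi'(s_1)$, which stays positive here because $|\alpha|<e^{\tau s_1}$ forces both factors to be positive whatever the sign of $\alpha$. Hence $\omega^2<1/\tau^2$, which contradicts $|\omega|\ge \pi/\tau$ from Lemma \ref{lem:PS bound} with $\eta=1$.

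For the necessity, the lower bound $-e^{\tau s_1}<\alpha$ is automatic from the standing hypothesis of exactly two real roots by Remark \ref{rmk:ICCRID exactly 2RR} and Lemma \ref{lem:properties on alpha and beta ICRRID}; only $\alpha<0$ remains to be shown necessary. I would argue contrapositively: if $\alpha\ge 0$, equivalently $(a+s_1)/(s_1-s_2)\ge 1$, the goal is to exhibit a zero of $\Delta$ in $\{\Re(s)\ge s_1\}\setminus\{s_1\}$. The natural route is to reuse the closed form \eqref{eq:omega positive real dominance} and track the sign bookkeeping as $\alpha$ traverses $0$, treating the retarded boundary $\alpha=0$ directly with Lemma \ref{lem:Frasson-Verduyn Lunel} and then extending by continuity to $\alpha>0$.

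I expect this necessity half to be the main obstacle, since the inequality $e^{2\tau x}-\alpha^2>0$ survives for $\alpha\ge 0$ too and so the sufficiency chain of sign computations does not reverse automatically. Closing the reverse implication will likely require either a finer sign analysis inside $\chi$ and $\chi'$, or a perturbative continuation argument around $\alpha=0$ combined with the explicit dominance test of Frasson--Verduyn Lunel.
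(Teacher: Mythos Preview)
Your sufficiency half is correct and is essentially the paper's own argument: eliminate $\beta$ via $\Delta(s_1)=0$, reproduce the expression \eqref{eq:omega positive real dominance} for $\omega^2$, run the sign analysis of $\chi$, $\chi'$, $\chi''$ exactly as in Theorem~\ref{thm:dominance of a real root 3 rr} using only $|\alpha|<e^{\tau s_1}$, and contradict Lemma~\ref{lem:PS bound} with $\eta=1$.

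Your necessity half rests on a miscomputed correspondence. From $\alpha=-\frac{a+s_1}{s_1-s_2}(e^{\tau s_1}-e^{\tau s_2})-e^{\tau s_2}$, the map $\Lambda_3\mapsto\alpha$ is \emph{decreasing}, so the lower bound $\Lambda_3>-1/(e^{\tau(s_1-s_2)}-1)$ corresponds to $\alpha<0$, while the upper bound $\Lambda_3<1$ corresponds to $\alpha>-e^{\tau s_1}$. Remark~\ref{rmk:ICCRID exactly 2RR} therefore makes $\alpha<0$ automatic under the standing hypothesis of exactly two real roots, not $\alpha>-e^{\tau s_1}$. Consequently your contrapositive case ``$\alpha\ge 0$'' is vacuous here, and the equivalence you state, ``$\alpha\ge 0$, equivalently $(a+s_1)/(s_1-s_2)\ge 1$'', is false; the latter is instead equivalent to $\alpha\le -e^{\tau s_1}$.

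What actually has to be shown for necessity is that $\alpha\le -e^{\tau s_1}$ forces $s_1$ not to be strictly dominant. At the boundary $\alpha=-e^{\tau s_1}$ one has the explicit factorization $\Delta(s)=(s-s_2)\bigl(1-e^{-\tau(s-s_1)}\bigr)$ (see Remark~\ref{rmk:dominance and not strict 2rr}), so $s_1+i\,2\pi k/\tau$ are all roots and strict dominance fails. For $\alpha<-e^{\tau s_1}$ the neutral chain of complex roots is asymptotic to the vertical line $\Re(s)=\ln(-\alpha)/\tau>s_1$ (this is the content of Theorem~\ref{thm:remaining spectrum general 2RR} and its corollary), so infinitely many roots lie strictly to the right of $s_1$. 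Neither Frasson--Verduyn Lunel's test nor a perturbation around $\alpha=0$ is relevant; the retarded boundary $\alpha=0$ is not even compatible with the hypothesis of exactly two real roots.
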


\begin{proof}
Firstly, it follows from Remark~\ref{rmk:ICCRID exactly 2RR} that $\Delta$ admits exactly two real spectral values if, and only if, the sufficient part in \eqref{eq:condition of the domiancy 2 rr} is satisfied. 
Let $s_0:=x+i\omega\in\C$ be such that $\Delta(s_0) = 0$. In particular, $\omega\neq 0$. From $\Delta(s_1)=0$, one gets $\beta = -\alpha s_1-e^{\tau s_1}(a+s_1)$, where $\alpha$ and $a$ are given by Lemma~\ref{lem:ICRRID}. It follows that $ \Delta(s_0) = 0$ if, and only if,
\begin{equation}\label{eq:delta s 0 vanishes 2 sv}
    e^{\tau s_0}s_0+e^{\tau s_0}a+\alpha s_0 = \alpha s_1+e^{\tau s_1}(a+s_1).
\end{equation}
By taking the real and imaginary parts of both sides in \eqref{eq:delta s 0 vanishes 2 sv}, one gets
 \begin{equation}\label{eq:s 0 and its cc 2 sv}
      \begin{split}
 	e^{\tau x}((a+x)\cos(\tau \omega)-\omega\sin(\tau\omega))&=e^{\tau s_1}(a+s_1)-\alpha(x-s_1)\\
e^{\tau x}((a+x)\sin(\tau \omega)+\omega\cos(\tau\omega))&=-\alpha\omega.
  \end{split}
 \end{equation}
 By squaring each equality in \eqref{eq:s 0 and its cc 2 sv} and adding them, one obtains
 \begin{equation}
 \omega^2=\frac{(e^{\tau s_1}(a+s_1)-\alpha(x-s_1))^2-(a+x)^2e^{2\tau x}}{e^{2\tau x}-\alpha^2},
 \end{equation}
  which is well-defined for every $x\ge s_1$ by Lemma~\ref{lem:properties on alpha and beta ICRRID}. Therefore, by performing the exact same steps of the proof of dominance in Theorem~\ref{thm:dominance of a real root 3 rr}, one gets that $\omega^2<1/\tau^2$, i.e., $|\omega|<1/\tau$. The latter is inconsistent since one has necessarily $|\omega|\ge \pi/\tau$ owing to Lemma~\ref{lem:PS bound}. Hence, $s_1$ is a dominant root of $\Delta$. Conversely, assume that either the sufficient or the necessary part of \eqref{eq:condition of the domiancy 2 rr} is not satisfied. 
  Then, another real root $x$ coexists in the spectrum of $\Delta$ by Theorem~\ref{thm:condition of the domiancy 2 rr with 1RR suplementary} or
   $s_1$ and $s_2$ are the only real spectral values of $\Delta$, and $s_1$ is not a strictly dominant root of $\Delta$ by Lemma~\ref{lem:properties on alpha and beta ICRRID} and the sufficient part of this theorem. 
\end{proof}

\begin{figure}[t]
		\centering
		\includegraphics[width=\linewidth]{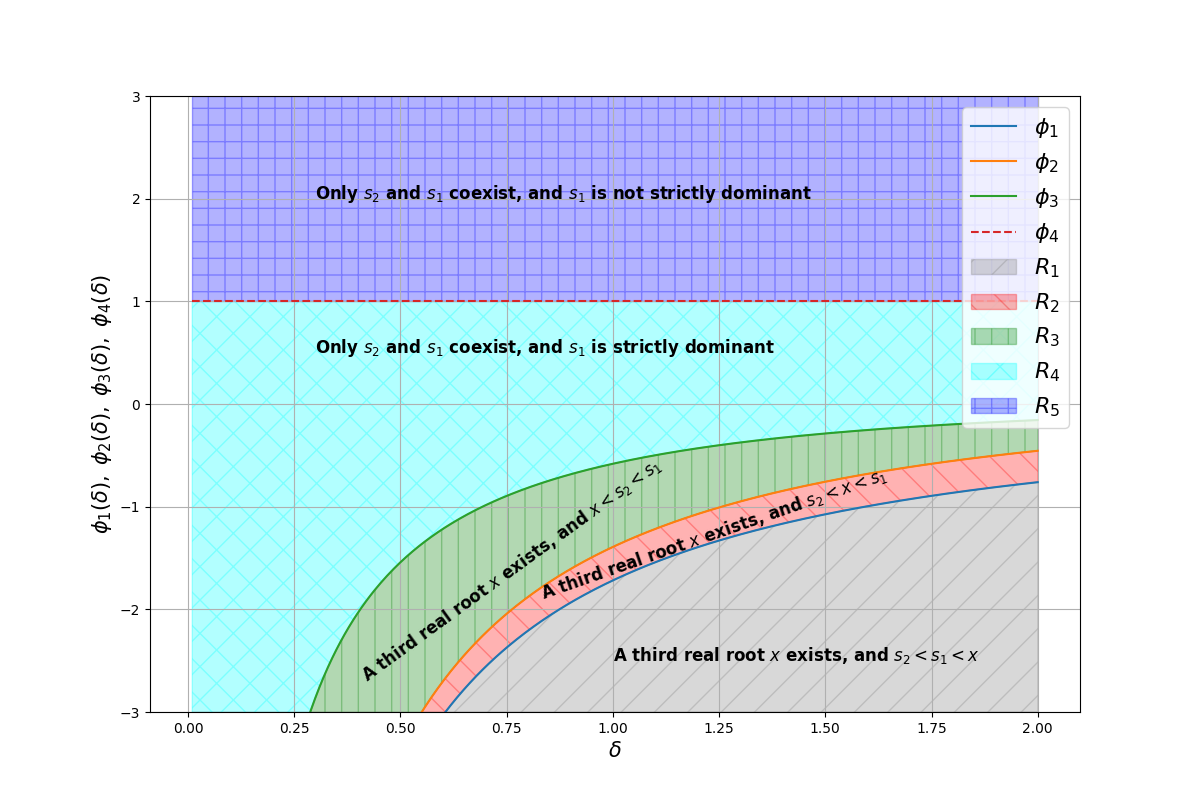}
  \vspace{-0.5cm}
		\caption{Complete characterization of the regions with respect to the values of $(a+s_1)/\delta$ that are necessary and sufficient to ensure the strict dominance or not of the real spectral value $s_1$ when only two real spectral values $s_2<s_1$ are assigned to $\Delta$. The figure is depicted when the delay $\tau=1$ while $\delta:=s_1-s_2$ ranges between $0.01$ and $2$. The functions used are $\phi_1(\delta)=(1-e^{\delta})/(e^{\delta}(\delta-1)+1)$, $\phi_2(\delta) = -\delta/(e^{\delta}-(1+\delta))$, $\phi_3(\delta) = -1/(e^{\delta}-1)$, and $\phi_4(\delta) = 1$. We define the regions $R_1$ as the range of $\delta$ where $\frac{a+s_1}{\delta}\le\phi_1(\delta)$, $R_2$ as the range where  $\phi_1(\delta)<\frac{a+s_1}{\delta}\le\phi_2(\delta)$, $R_3$ as the range where  $\phi_2(\delta)<\frac{a+s_1}{\delta}\le\phi_3(\delta)$, $R_4$ as the range where  $\phi_3(\delta)<\frac{a+s_1}{\delta}<\phi_4(\delta)$, and $R_5$ as the range where  $\frac{a+s_1}{\delta}\ge\phi_4(\delta)$.}
		\label{fig:comparison-region-2RR}
\end{figure}
\begin{remark}\label{rmk:dominance and not strict 2rr}
    In the particular case of exactly two real spectral values $s_2<s_1$ coexisting in the spectrum of $\Delta$, and $\alpha=-e^{\tau s_1}$ then $s_1$ is a dominant (not strict) root of $\Delta$ since its remaining spectrum is given by $s=s_1+i\frac{2\pi k}{\tau}$,\;$k\in\Z$. 
    Indeed, if $\alpha=-e^{\tau s_1}$, one has
    $\Delta(s) = (s-s_2)(1-e^{-\tau(s-s_1)})$ for every $s\in\C$, from which one obtains the desired result. When $\alpha=0$, $s_1$ is still a strictly dominant root of $\Delta$. Indeed, it is a dominant root by \cite[Theorem~12]{bedouhene2020real}. Next, one finds in this case
    \begin{equation*}
        \Delta(s) = \frac{e^{\tau (s_1-s_2)}(s-s_1)+(s_1-s_2)(e^{-\tau(s-s_1)}-1)}{e^{\tau(s_1-s_2)}-1}.
    \end{equation*}
    Letting $s=s_1+i\omega$, one deduces that $\Delta(s)=0$ if and only if
    \begin{equation*}
         \cos(\tau\omega) = 1,\quad \omega e^{\tau (s_1-s_2)}=(s_1-s_2)\sin(\tau\omega).
    \end{equation*}
    where from one obtains $\omega=0$. It follows that a non-real root $s$ of $\Delta$ necessary satisfies $\Re(s)<s_1$.
\end{remark}
Aside from the necessary and sufficient conditions on the dominance of the assigned rightmost root $s_1$, which have already been established in \cite[Theorem~6]{schmoderer2023boundary},   Theorem \ref{thm:condition of the domiancy 2 rr with 1RR suplementary} and Theorem~\ref{thm:dominance of a real root 2 rr} provide a complete description of such conditions with respect to the number of assigned roots and the potential coexistence of a third real root. 
We refer to Figure~\ref{fig:comparison-region-2RR} for a full visualization.
The following corollary is an immediate consequence of Theorem~\ref{thm:dominance of a real root 2 rr} and Remark~\ref{rmk:dominance and not strict 2rr}.  It generalizes and gives a straightforward proof of the IMID property provided in \cite[Theorem~3.1-\textit{item~2}]{benarab2023multiplicity}.
\begin{corollary}\label{cor:condition of the dominance 1rr with multiplicity 2}
      Assume that $\Delta$ admits two real spectral values $s_2<s_1$. Then, $s_1$ is a dominant root of $\Delta$ of multiplicity equal to two if and only if
      \begin{equation}\label{eq:condition of the dominance 1rr with multiplicity 2}
          -1\le\tau(a+s_1)\le 0\qquad (\forall a\in\R).
      \end{equation}
      In particular, $s_1$ is a strictly dominant root of $\Delta$ if and only if the right inequality in \eqref{eq:condition of the dominance 1rr with multiplicity 2} is strict.
\end{corollary}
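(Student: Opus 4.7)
The plan is to exploit the fact that requiring $s_1$ to be a double root of $\Delta$ uniquely pins down the coefficient $\alpha$ as a function of $\tau(a+s_1)$, and then to translate the dominance characterization of Theorem~\ref{thm:dominance of a real root 2 rr} together with Remark~\ref{rmk:dominance and not strict 2rr} into the stated inequality.

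First, I would write out the multiplicity-$2$ conditions $\Delta(s_1)=0$ and $\Delta'(s_1)=0$. The first yields $e^{-\tau s_1}(\alpha s_1+\beta)=-(s_1+a)$; substituting this into $\Delta'(s_1)=1+e^{-\tau s_1}\alpha-\tau e^{-\tau s_1}(\alpha s_1+\beta)=0$ gives $\alpha = -e^{\tau s_1}\bigl(1+\tau(a+s_1)\bigr)$, so that $-1\le \tau(a+s_1)\le 0$ is equivalent to $-e^{\tau s_1}\le \alpha\le 0$ by direct algebra. Incidentally, this expression for $\alpha$ is exactly the limit $s_2\to s_1$ of the formula in Lemma~\ref{lem:ICRRID}, confirming that imposing a double root is the limiting case of the ICRRID setup.

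Next, I would invoke Theorem~\ref{thm:dominance of a real root 2 rr}, whose dominance argument only relies on $\Delta(s_1)=0$ together with $\alpha\in(-e^{\tau s_1},0)$ to produce the bound $\omega^2<1/\tau^2$ for any non-real spectral value $x+i\omega$ with $x\ge s_1$; simplicity of $s_1$ plays no role in that derivation, so the argument transfers verbatim. Since the hypothesis supplies a second real root $s_2<s_1$, the total number of real spectral values counted with multiplicity is three, and Lemma~\ref{lem:PS bound} with $\eta=0$ delivers the sharper bound $|\omega|\ge 2\pi/\tau$, which still contradicts $|\omega|<1/\tau$. The two boundary cases are handled by Remark~\ref{rmk:dominance and not strict 2rr}: $\alpha=0$, corresponding to $\tau(a+s_1)=-1$, still yields strict dominance, whereas $\alpha=-e^{\tau s_1}$, corresponding to $\tau(a+s_1)=0$, gives dominance that fails to be strict, since the factorization in that remark produces the family $s_1+i2\pi k/\tau$, $k\in\Z$, of spectral values.

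Combining these, $s_1$ is dominant if and only if $-1\le \tau(a+s_1)\le 0$, and strict dominance is lost precisely at $\tau(a+s_1)=0$, i.e.\ when the right inequality in \eqref{eq:condition of the dominance 1rr with multiplicity 2} fails to be strict. A short computation yielding $\Delta''(s_1)=\tau\bigl(2+\tau(a+s_1)\bigr)$, which is nonzero throughout $\tau(a+s_1)\in[-1,0]$, confirms that the multiplicity is exactly two. The only delicate point, and what I would flag as the main obstacle, is verifying that the $\chi(x)$ estimate in the proof of Theorem~\ref{thm:dominance of a real root 2 rr} carries over without assuming simplicity of $s_1$; once this is checked, the remainder of the argument is essentially algebraic.
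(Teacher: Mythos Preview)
Your proposal is correct and follows essentially the same approach as the paper: compute $\alpha=-e^{\tau s_1}(1+\tau(a+s_1))$ from $\Delta(s_1)=\Delta'(s_1)=0$, then read off the equivalence with $-e^{\tau s_1}\le\alpha\le 0$ and invoke Theorem~\ref{thm:dominance of a real root 2 rr} together with Remark~\ref{rmk:dominance and not strict 2rr}. Your additional checks (that the $\chi$-argument does not rely on simplicity of $s_1$, that $\eta=0$ is the right choice in Lemma~\ref{lem:PS bound} once multiplicities are counted, and that $\Delta''(s_1)=\tau(2+\tau(a+s_1))\neq 0$ on the stated range) are refinements the paper passes over silently, not a different strategy.
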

\begin{proof}
    Assume that $\Delta$ admits two real spectral values $s_2<s_1$. By Theorem~\ref{thm:dominance of a real root 2 rr} and Remark~\ref{rmk:dominance and not strict 2rr}, the real root $s_1$ is a dominant root of $\Delta$ if and only if $-e^{\tau s_1}\le\alpha\le 0$. If in addition $\Delta'(s)=0$, then $\alpha=-(\tau(a+s_1)+1)e^{\tau s_1}$. The result follows.
\end{proof}

As in Theorem~\ref{thm:remaining spectrum general 3RR}, let us end this section by fully characterizing the remaining spectrum of $\Delta$ when exactly two real roots coexist in its spectrum. The proof follows \emph{mutatis mutandis} that of Theorem~\ref{thm:remaining spectrum general 3RR}.
\begin{theorem}\label{thm:remaining spectrum general 2RR}
    Assume that $\Delta$ has exactly two real spectral values $s_2<s_1$. Then, its remaining spectrum is given by 
    \begin{equation}
        s = s_2+\frac{\ln(-\theta)}{\tau}+i\frac{\omega}{\tau}
    \end{equation}
   if and only if
 \begin{equation}\label{eq:xi identity 2rr}
       \ln(-\theta) = \xi\frac{(\theta-1)}{2\theta}.
   \end{equation}
The imaginary part $\omega\neq 0$ solves
    \begin{equation}\label{eq:equation imaginary part 2rr}
        \frac{(\theta+1)}{2\theta}\cot\left(\frac{\omega}{2}\right) = -\frac{\omega}{\xi}
    \end{equation}
    where $\theta:=\alpha e^{-\tau s_2}$ with $-e^{\tau s_1}<\alpha<0$ given by \eqref{eq:a and alpha} and 
    \begin{equation}
        \xi = \frac{(\alpha+e^{\tau s_1})}{F_{-1,\tau}(s_1,s_2)}>0.
    \end{equation}
\end{theorem}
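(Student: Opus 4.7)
The plan is to mimic the proof of Theorem~\ref{thm:remaining spectrum general 3RR} \emph{mutatis mutandis}, with the crucial difference that here $\alpha<0$ by Lemma~\ref{lem:properties on alpha and beta ICRRID}, so $\theta:=\alpha e^{-\tau s_2}$ lies in $(-e^{\tau(s_1-s_2)},0)$ and we must work with $\ln(-\theta)$ rather than $\ln(\theta)$. First I would use Lemma~\ref{lem:ICRRID} and the definition of $\xi$ in the theorem statement to rewrite the coefficients as $\alpha=\theta e^{\tau s_2}$, $a=-s_2-\xi/\tau$, $\beta=-\alpha s_2+(\xi/\tau)e^{\tau s_2}$, and then rescale by introducing
\[
\Delta'(z):=\tau\Delta(s_2+z/\tau)=z-\xi+e^{-z}(\theta z+\xi),
\]
whose real zeros are $0$ and $\tau(s_1-s_2)>0$, the latter being strictly dominant thanks to Theorem~\ref{thm:dominance of a real root 2 rr}. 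A complex number $s$ belongs to the spectrum of $\Delta$ if and only if $\tau(s-s_2)$ belongs to the spectrum of $\Delta'$.

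Next, for any non-real root $z_0=x_0+i\omega_0$ of $\Delta'$, Lemma~\ref{lem:PS bound} with $\eta=1$ yields $|\omega_0|\ge\pi$. Rewriting $\Delta'(z_0)=0$ as $(z_0-\xi)/(\theta z_0+\xi)=-e^{-z_0}$, taking moduli squared, and letting $|\omega_0|\to\infty$ in the resulting identity
\[
\frac{(x_0-\xi)^2+\omega_0^2}{(\theta x_0+\xi)^2+\theta^2\omega_0^2}=e^{-2x_0}
\]
forces $e^{-2x_0}\theta^2=1$. Since $\theta<0$, this gives $e^{-x_0}=-1/\theta$, hence $x_0=\ln(-\theta)$. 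Therefore every non-real root of $\Delta'$ either lies on the vertical line $\Re(z)=\ln(-\theta)$ or the remaining spectrum merely clusters asymptotically to this line.

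To rule out the asymptotic case and obtain exact alignment, I would then assume the compatibility condition \eqref{eq:xi identity 2rr}, equivalently $2\ln(-\theta)=\xi(\theta-1)/\theta$. Using $e^{-2\ln(-\theta)}=1/\theta^2$, a direct computation---formally identical to the one in Theorem~\ref{thm:remaining spectrum general 3RR} up to the replacement $\ln(\theta)\mapsto\ln(-\theta)$---establishes the reflection identity
\[
\Delta'(2\ln(-\theta)-\bar z_0)=-\frac{e^{\bar z_0}}{\theta}\,\Delta'(\bar z_0),\qquad\forall z_0\in\C.
\]
Since $\Delta'$ has real coefficients, reflection across $\Re(z)=\ln(-\theta)$ preserves its spectrum, which combined with the previous paragraph forces the non-real spectrum to lie exactly on that line. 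Substituting $z=\ln(-\theta)+i\omega$ into $\Delta'(z)=0$ and exploiting the algebraic identity $\ln(-\theta)+\xi/\theta=\xi(\theta+1)/(2\theta)$, the real part reproduces \eqref{eq:xi identity 2rr}, while the imaginary part, after applying $1-\cos\omega=2\sin^2(\omega/2)$ and $\sin\omega=2\sin(\omega/2)\cos(\omega/2)$, yields \eqref{eq:equation imaginary part 2rr}.

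The main obstacle is the careful bookkeeping of the sign flip caused by $\theta<0$: whereas in Theorem~\ref{thm:remaining spectrum general 3RR} the factor $e^{-\ln(\theta)}=1/\theta$ produced an imaginary-part equation containing $1+\cos\omega$, here the extra minus sign in $e^{-\ln(-\theta)}=-1/\theta$ converts it into $1-\cos\omega$, which is precisely what replaces $\tan(\omega/2)$ by $\cot(\omega/2)$ (with a negative right-hand side) in \eqref{eq:equation imaginary part 2rr}. One must also verify that the reflection identity survives verbatim under the modified compatibility condition; this is the only genuinely non-routine algebraic check and follows because the key relation $\ln(-\theta)+\xi/\theta=\xi(\theta+1)/(2\theta)$ has the same form as its 3RR counterpart.
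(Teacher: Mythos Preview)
Your proposal is correct and follows exactly the approach the paper prescribes: the paper states only that ``the proof follows \emph{mutatis mutandis} that of Theorem~\ref{thm:remaining spectrum general 3RR},'' and you have supplied precisely those modifications, correctly tracking the sign flip from $\theta>0$ to $\theta<0$ (so that $e^{-\ln(-\theta)}=-1/\theta$), which turns $e^{i\omega}+1$ into $e^{i\omega}-1$ in the key identity $z=\xi(e^z-1)/(e^z+\theta)$ and thereby replaces $\tan(\omega/2)$ by $\cot(\omega/2)$ with the appropriate sign.
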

As an immediate consequence of Theorem~\ref{thm:remaining spectrum general 2RR}, one has the following.
\begin{corollary}
     Assume that $\Delta$ admits exactly two real spectral values $s_2<s_1$. If \eqref{eq:xi identity 2rr} is not satisfied, then the remaining spectrum of $\Delta$ forms a chain asymptotic to the vertical line
     \begin{equation}
         \Re(s) = \ln(-\alpha) 
     \end{equation}
   where $-e^{\tau s_1}<\alpha<0$ is given by \eqref{eq:a and alpha}. More precisely, there exists $s_0\in\C$ such that the following holds
   \begin{equation}
       \Delta(s_0) = 0\quad\mbox{and}\quad\Re(s_0)\neq\ln(-\alpha). 
   \end{equation}
\end{corollary}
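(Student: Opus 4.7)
The plan is to follow \emph{mutatis mutandis} the strategy developed in the proof of Theorem~\ref{thm:remaining spectrum general 3RR}, adapted to the present setting where exactly two real spectral values coexist and $\alpha<0$, hence $\theta:=\alpha e^{-\tau s_2}<0$. The key bookkeeping difference with the three-real-roots case is that one must replace $\theta$ by $|\theta|=-\theta$ whenever a modulus appears, and use $\ln(-\theta)$ in place of $\ln(\theta)$ when identifying the asymptotic real part.

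First, I would introduce the rescaled quasipolynomial $\Delta'(z):=\tau\Delta(s_2+z/\tau)$. Using Lemma~\ref{lem:ICRRID}, one checks that $\Delta'$ has the form $\Delta'(z)=z-\xi+e^{-z}(\theta z+\xi)$ and admits the real zeros $0$ and $\tau(s_1-s_2)$. For any non-real root $z_0=x_0+i\omega_0$ of $\Delta'$, rewriting $\Delta'(z_0)=0$ as $(z_0-\xi)/(\theta z_0+\xi)=-e^{-z_0}$ and taking moduli yields
\begin{equation*}
\sqrt{\frac{(x_0-\xi)^2+\omega_0^2}{(\theta x_0+\xi)^2+\theta^2\omega_0^2}}=e^{-x_0}.
\end{equation*}
Letting $|\omega_0|\to\infty$, the left-hand side tends to $1/|\theta|=-1/\theta$, so $x_0\to\ln(-\theta)$. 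Translating back via $s=s_2+z/\tau$ shows that the real parts of the non-real roots of $\Delta$ accumulate on the vertical line stated in the corollary, establishing the asymptotic chain structure.

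For the second assertion --- existence of $s_0$ with $\Re(s_0)$ off this line --- I would argue by contradiction: if every non-real root $s$ of $\Delta$ lay on the asymptotic line, then Theorem~\ref{thm:remaining spectrum general 2RR} (which is an iff characterization) would force the identity \eqref{eq:xi identity 2rr} to hold, contradicting our hypothesis. Therefore at least one non-real root lies strictly off the line, providing the required $s_0$.

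The main delicate point is the sign tracking: the argument of Theorem~\ref{thm:remaining spectrum general 3RR} relies on $0<\theta<e^{\tau(s_1-s_2)}$ to identify the asymptotic real part as $\ln(\theta)$, whereas here we have $-e^{\tau(s_1-s_2)}<\theta<0$ by Lemma~\ref{lem:properties on alpha and beta ICRRID}, forcing the use of $\ln(-\theta)=\ln|\theta|$. Once this adjustment is made, no genuinely new estimate is required; the reflection identity underpinning the iff part of Theorem~\ref{thm:remaining spectrum general 2RR}, namely $z\mapsto 2\ln(-\theta)-\bar z$, carries over with the same algebraic structure.
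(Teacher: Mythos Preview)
Your proposal is correct and mirrors the paper's approach: the paper states this corollary as an immediate consequence of Theorem~\ref{thm:remaining spectrum general 2RR}, whose proof is itself the \emph{mutatis mutandis} adaptation of Theorem~\ref{thm:remaining spectrum general 3RR} that you carry out, with the sign bookkeeping $\theta<0$ and $\ln(-\theta)$ exactly as you describe. Your contradiction argument for the second assertion, invoking the ``if and only if'' in Theorem~\ref{thm:remaining spectrum general 2RR}, is precisely what the paper intends by ``immediate consequence''.
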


\subsection{On Frasson-Verduyn Lunel's sufficient conditions for dominance and beyond}\label{ss:Frasson's sufficient conditions}

This section builds upon Frasson-Verduyn Lunel's seminal work \cite[Lemma~5.1]{frasson2003large}, which established a sufficient condition for the dominance of a simple real spectral value of quasipolynomials with multiple delays. While Frasson-Verduyn Lunel's lemma offers a fundamental method for assessing spectral dominance, its applicability is restricted to specific conditions that may only address certain dynamic scenarios encountered in complex systems. Restricted to the single-delay case, when Frasson-Verduyn Lunel's condition is not met, the CRRID property still provides a guarantee for the dominance of a simple real root.

The first result of this section states as follows.

\begin{theorem}\label{thm:beyond Frasson's Lemma GCRRID}
Let $\tau>0$. There exist $s_1\le 0$ and $d>0$, such that if $\Delta$ admits three equidistributed real spectral values $s_3=s_1-2d$, $s_2=s_1-d$ and $s_1$, then
\begin{equation}
    V(s_1) = (|\alpha|(1+|s_1|\tau)+|\beta|\tau)e^{-s_1\tau}
\end{equation}
satisfies $V(s_1)\ge 1$ and $s_1$ is a dominant root of $\Delta$.
\end{theorem}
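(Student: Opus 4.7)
The claim has two parts: $s_1$ is a dominant root of $\Delta$, and the Frasson--Verduyn Lunel quantity satisfies $V(s_1) \ge 1$. The dominance part is free: since by hypothesis $\Delta$ admits three real spectral values $s_3<s_2<s_1$, Theorem~\ref{thm:dominance of a real root 3 rr} immediately yields that $s_1$ is (strictly) dominant. The substantive content of the theorem is therefore the constructive part, namely exhibiting, for an arbitrary $\tau>0$, a pair $(s_1,d)$ with $s_1\le 0$, $d>0$ for which $V(s_1)\ge 1$.

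My plan is to compute $V(s_1)$ explicitly in the equidistributed configuration $s_k=s_1-(k-1)d$, and then inspect its behavior as $d\to 0^+$. Specializing the computations of Corollary~\ref{cor:remaining spectrum equidistributed RR} to this case gives $\theta:=\alpha e^{-\tau s_2}=1$, hence $\alpha=e^{\tau(s_1-d)}>0$. Combining the identity $\beta=-\alpha s_1+\zeta(\tau)e^{\tau s_1}$ from the remark following Lemma~\ref{lem:coeffs} with the closed-form expressions for $F_{-\tau,1}$ and $F_{-\tau,2}$ supplied by \eqref{eq:recursive function 1} and \eqref{eq:F 1}, one finds $\zeta(\tau)=2d/(e^{\tau d}-1)$ and therefore
\begin{equation*}
    \beta = -s_1\, e^{\tau(s_1-d)} + \frac{2d\,e^{\tau s_1}}{e^{\tau d}-1} > 0
\end{equation*}
whenever $s_1\le 0$. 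With $\alpha,\beta>0$, the absolute values in $V(s_1)$ drop out, and using $|\alpha|e^{-\tau s_1}=e^{-\tau d}$ one obtains after simplification
\begin{equation*}
    V(s_1) = e^{-\tau d}\bigl(1+2|s_1|\tau\bigr) + \frac{2\tau d}{e^{\tau d}-1}.
\end{equation*}

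To conclude, I would fix any $s_1\le 0$ (taking $s_1=0$ is the cleanest choice) and let $d\to 0^+$. Since $e^{\tau d}-1\sim \tau d$, the second summand tends to $2$ while the first tends to $1+2|s_1|\tau$, so $V(s_1)\to 3+2|s_1|\tau\ge 3>1$. By continuity of $d\mapsto V(s_1)$, there exists $d_\tau>0$ such that $V(s_1)\ge 1$ for every $0<d\le d_\tau$, which, together with the dominance granted by Theorem~\ref{thm:dominance of a real root 3 rr}, yields the required pair $(s_1,d)$ and completes the proof. I expect no genuine obstacle: the only step requiring care is the explicit computation of $\alpha$ and $\beta$ in the equidistributed regime and the verification that both are non-negative when $s_1\le 0$, which is precisely what allows the absolute values in $V(s_1)$ to be removed unambiguously.
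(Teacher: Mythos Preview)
Your argument is correct and follows the same overall strategy as the paper: invoke Theorem~\ref{thm:dominance of a real root 3 rr} for dominance, compute $\alpha$ and $\beta$ explicitly in the equidistributed configuration, check their signs so the absolute values drop, and reduce $V(s_1)$ to an explicit function of $\tau d$ and $\tau s_1$. Your closed form $V(s_1)=e^{-\tau d}(1+2|s_1|\tau)+2\tau d/(e^{\tau d}-1)$ is exactly the paper's expression $(1-2\tau s_1+\tau d+\tau d\coth(\tau d/2))e^{-\tau d}$ rewritten.

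The only difference lies in the concluding step. The paper solves the inequality $V(s_1)\ge 1$ completely, obtaining the sharp threshold
\[
s_1\le \frac{\tau d\coth(\tau d/2)+\tau d+1-e^{\tau d}}{2\tau},
\]
and then analyzes the right-hand side as a function of $u=\tau d$ (strictly decreasing from $1$ to $-\infty$) to conclude existence. Your approach instead fixes $s_1\le 0$ and sends $d\to 0^+$, observing that $V(s_1)\to 3+2|s_1|\tau>1$, which is shorter and perfectly sufficient for the bare existence statement. The paper's route buys more: it identifies \emph{all} admissible pairs $(s_1,d)$, which is what feeds into the heatmap of Figure~\ref{fig:heatmap-Z-W}. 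Your route buys economy. Both are valid.
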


\begin{proof}
    Assume that the quasipolynomial $\Delta$ admits three equidistributed real spectral values $s_k = s_1-(k-1)d$, for some $d>0$ (that we will choose later on) and $k=1, 2, 3$ with $s_1\le 0$. Necessarily, $s_k$ is simple since $\Delta$ cannot admit more than three real roots counting multiplicities  \cite[Problem~206.2, page~144]{polya1972analysis}. Hence, Theorem~\ref{thm:dominance of a real root 3 rr} states that $s_1$ is a strictly dominant root of $\Delta$. Moreover, one deduces from Lemma~\ref{lem:coeffs} that
     \begin{equation*}
         \alpha  =e^{\tau(s_1-d)},\quad \beta = e^{\tau(s_1-d)}\left(-s_1+d+d\coth\left({\tau d}/{2}\right)\right),
    \end{equation*}
    \begin{equation}\label{eq:function V}
        V(s_1) = \left(1-2\tau s_1+\tau d+\tau d\coth\left({\tau d}/{2}\right)\right)e^{-\tau d}.
    \end{equation}
    Introducing $g(s_1):=V(s_1)-1$, one finds
    \begin{equation*}
        g(s_1) = e^{-\tau d}\left(\tau d\coth\left({\tau d}/{2}\right)+\tau d+1-e^{\tau d}-2\tau s_1\right).
    \end{equation*}
It follows that $g(s_1)\ge 0$ if, and only if,
    \begin{equation}\label{eq:valid s1 for non Frasson}
        s_1\le({\tau d\coth\left({\tau d}/{2}\right)+\tau d+1-e^{\tau d}})/{2\tau}.
    \end{equation}

In particular, knowing that Frasson-Verduyn Lunel's sufficient condition is not met the first time when $V(s_1) = 1$, owing to inequality \eqref{eq:valid s1 for non Frasson} the latter is equivalent to ($v:=\tau s_1$ and $u:=\tau d$)
\begin{equation}\label{eq:first valid s1 for non Frasson}
    v(u) = ({u\coth\left({u}/{2}\right)+u+1-e^{u}})/{2}\qquad (u>0).
\end{equation}

A straightforward analysis of the function $u\mapsto v(u)$ for $u>0$ shows that $v$ is strictly decreasing on $(0,\infty)$ and satisfies
\begin{equation}
     \lim_{u\to 0}v(u) =1,\qquad\qquad\lim_{u\to\infty}v(u)  =-\infty.
\end{equation}
Therefore, there exist $s_1\le 0$ and $d>0$ satisfying \eqref{eq:valid s1 for non Frasson} such that $V(s_1)\ge 1$.
\end{proof}

Theorem~\ref{thm:beyond Frasson's Lemma GCRRID} shows that the sufficient condition of Lemma~\ref{lem:Frasson-Verduyn Lunel} is not necessary for the dominance of simple real spectral values when $\Delta$ admits three equidistributed real roots.

The function $V$ defined in \eqref{eq:function V} initially seems to depend on three separate parameters: the delay $\tau$, the distance $d$, and a dominant simple real value $s_1$. However, upon closer inspection, it becomes apparent that $V$ can be expressed solely in terms of the products $\tau d$ and $\tau s_1$, denoted by $u$ and $v$, respectively. As a result, the function reduces to 
\begin{equation}\label{eq:function V-1} 
W(u,v):= \left(1-2v+u+u\coth\left(\frac{u}{2}\right)\right)e^{-u}. 
\end{equation} 

For visualization, it is sufficient to consider plots of $W$ as a function of $u=\tau d$ and $v=\tau s_1$, treating $\tau$ as a positive constant scaling factor. Refer to Figure~\ref{fig:heatmap-Z-W} (image on the \textit{left}).

The second main result of this section is the following.

\begin{theorem}\label{thm:beyond Frasson's Lemma ICRRID}
Let $\tau>0$. There exist $s_1\le 0$ and $\delta>0$ such that if $\Delta$ admits exactly two simple real spectral values $s_1$ and $s_2=s_1-\delta$, and inequality \eqref{eq:asymptotic beta 2RR} is satisfied, then
\begin{equation}
    Y(s_1) = (|\alpha|(1+|s_1|\tau)+|\beta|\tau)e^{-s_1\tau}
\end{equation}
satisfies $Y(s_1)\ge 1$ and $s_1$ is a dominant root of $\Delta$.
\end{theorem}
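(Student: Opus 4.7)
The plan is to imitate the proof of Theorem~\ref{thm:beyond Frasson's Lemma GCRRID}, swapping the three equidistributed real roots for the two-real-root configuration analyzed in Section~\ref{ss:2 DRSV}. The dominance of $s_1$ comes for free: since by hypothesis $\Delta$ admits exactly two real spectral values $s_2<s_1$ and the inequality \eqref{eq:asymptotic beta 2RR} holds, Lemma~\ref{lem:properties on alpha and beta ICRRID} places $\alpha$ in $(-e^{\tau s_1},0)$, so Theorem~\ref{thm:dominance of a real root 2 rr} immediately yields that $s_1$ is a strictly dominant root of $\Delta$. What remains is to exhibit $s_1\le 0$ and $\delta>0$ for which $Y(s_1)\ge 1$, that is, parameters outside the validity range of Frasson--Verduyn Lunel's sufficient condition.

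To this end I would set $\delta:=s_1-s_2>0$ and parameterize via $\lambda:=(a+s_1)/\delta\in[0,1)$. Lemma~\ref{lem:ICRRID} then gives the closed forms
\begin{equation*}
\alpha = e^{\tau s_1}\bigl((\lambda-1)e^{-\tau\delta}-\lambda\bigr),\qquad \beta = -\alpha s_1 - e^{\tau s_1}\lambda\delta,
\end{equation*}
while Lemma~\ref{lem:properties on alpha and beta ICRRID} (together with $s_1\le 0$) guarantees $\alpha<0$ and $\beta\le 0$, so $|\alpha|=-\alpha$ and $|\beta|=-\beta$. Setting $A:=-\alpha e^{-\tau s_1}=\lambda(1-e^{-\tau\delta})+e^{-\tau\delta}$, a short algebraic reduction of $Y(s_1)=\bigl(-\alpha(1-\tau s_1)-\beta\tau\bigr)e^{-\tau s_1}$ collapses the two exponentials and leads to the two-term expression
\begin{equation*}
Y(s_1) = A\,(1-2\tau s_1) + \tau\lambda\delta.
\end{equation*}

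The conclusion follows from the observation that $A>0$ for every $\lambda\in[0,1)$ and $\delta>0$: hence $Y(s_1)\to+\infty$ as $s_1\to-\infty$, and for any fixed $\delta>0$ and any admissible $\lambda$ one can choose $s_1\le 0$ sufficiently negative so that $Y(s_1)\ge 1$. An explicit threshold is $s_1\le\min\bigl(0,(A+\tau\lambda\delta-1)/(2\tau A)\bigr)$, which for the simplest choice $\lambda=0$ reduces to $s_1\le(1-e^{\tau\delta})/(2\tau)<0$. The only delicate step is the algebraic bookkeeping that collapses $Y(s_1)$ into its two-term form; no further obstacle beyond what was already handled in Theorem~\ref{thm:beyond Frasson's Lemma GCRRID} is anticipated, and as there one could visualize the admissible region by plotting $Y(s_1)$ as a function of $\tau\delta$ and $\tau s_1$ for fixed $\lambda$.
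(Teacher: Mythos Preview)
Your argument is correct and in fact cleaner than the paper's. The dominance step is identical in both, and your algebraic reduction $Y(s_1)=A(1-2\tau s_1)+\tau\lambda\delta$ with $A=\lambda(1-e^{-\tau\delta})+e^{-\tau\delta}$ is easily checked against the paper's own expression for $Y(s_1)$. The key difference is the choice of free parameter: you fix $\lambda=(a+s_1)/\delta$ and vary $s_1$, which makes $Y(s_1)$ \emph{affine} in $s_1$ (since $A$ depends only on $\lambda$ and $\tau\delta$), so $Y(s_1)\to+\infty$ as $s_1\to-\infty$ is immediate. The paper instead fixes $A:=\tau a$ and treats $v=\tau s_1$ as the variable; because $\lambda$ then moves with $s_1$, the numerator $uh(A,u,v)$ becomes a genuine quadratic in $v$, forcing a discriminant analysis, sign tables in the auxiliary thresholds $A_1(u),A_2(u),A_3(u)$, and an explicit description of the admissible box \eqref{eq:condition on s1 and a}. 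Your parameterization sidesteps all of that. What the paper's longer route buys is a description of the region in the $(\tau a,\tau\delta,\tau s_1)$ coordinates used in the heatmap of Figure~\ref{fig:heatmap-Z-W}; your version gives instead a one-line threshold $s_1\le (A+\tau\lambda\delta-1)/(2\tau A)$ in the $(\lambda,\tau\delta,\tau s_1)$ coordinates, which is equally informative for the existence claim the theorem actually makes.
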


\begin{proof}
    If $\Delta$ admits exactly two simple real spectral values $s_2<s_1$ and inequality \eqref{eq:asymptotic beta 2RR} is satisfied, then $s_1$ is a strictly dominant root of $\Delta$ by Theorem~\ref{thm:dominance of a real root 2 rr}. Moreover, $\alpha<0$, and if $s_1\le 0$ then $\beta\le 0$ by Lemma~\ref{lem:properties on alpha and beta ICRRID}. If $s_2=s_1-\delta$ for some $\delta>0$ that will be chosen later on, then one has from Lemma~\ref{lem:ICRRID},
    \begin{equation*}
    \begin{split}
        \alpha  &= -e^{\tau s_1}\left(\frac{(a+s_1)}{\delta}(1-e^{-\tau\delta})+e^{-\tau\delta}\right)\\
        \beta &= -\alpha s_1-e^{\tau s_1}(a+s_1)
    \end{split}
    \end{equation*}
     so that
     \begin{equation*}
         \hspace{-0.2cm}Y(s_1)=\frac{e^{-\tau\delta}(a+s_1-\delta)(-1+2\tau s_1)+(a+s_1)(1-2\tau s_1+\tau\delta)}{\delta}.
     \end{equation*}
    Upon closer inspection, one finds that $Y$ can be expressed solely in terms of the products $\tau a$, $\tau \delta$, and $\tau s_1$. Setting $A:=\tau a$, $u:=\tau\delta$, $v:=\tau s_1$, then $A\in\R$, $u>0$ and $v<0$. Introducing the functions
    \begin{equation}\label{eq:function Z and h}
        Z(A,u,v):=Y(s_1),\quad h(A,u,v) = Z(A,u,v)-1
    \end{equation}
    one gets
    \begin{equation*}
    \begin{alignedat}{2}
        {u}h(A,u,v) &= -(2-2e^{-u})v^2-((1-e^{-u})(2A-u-1)+ue^{-u})v\\
        &\quad +(A+e^{-u}-1)u+A(1-e^{-u})
    \end{alignedat}
    \end{equation*}
which is a second-degree polynomial in $v$. Let $N(A,u,v)$ be the numerator of $h(A,u,v)$. From $N(A,u,v)=0$, one finds that its discriminant
$D_1(A,u)$ admits two real roots $A_0(u)<0$ for every $u>0$ and $A_1(u)$ that can change sign for $u>0$.
One infers that for every $u>0$, $D_1(A,u)\ge 0$ for $A\le A_0(u)$ and $A\ge A_1(u)$. For simplicity, assume from now on that $A\ge A_1(u)$. Hence, equation  $N(A,u,v)=0$ admits two real roots
\begin{equation}
    \begin{split}
        v_1(A,u)&=\frac{-C_b(A,u)-\sqrt{D_1(A,u)}}{2C_a(u)}\\
        v_2(A,u)&=\frac{-C_b(A,u)+\sqrt{D_1(A,u)}}{2C_a(u)}.
    \end{split}
\end{equation}
 Here
\[
   C_a(u) = -2(1-e^{-u}),\quad C_c(A,u)=(1-e^{-u}+u)A-(1-e^{-u})u,
\]
\[
C_b(A,u)=-2(1-e^{-u})A+(1+u)(1-e^{-u})-ue^{-u}
\]
are the coefficients of $N(A,u,v)$ considered as a second-degree polynomial in $v$, so that $D_1(A,u)=C_b^2(A,u)-4C_a(u)C_c(A,u)$. Letting
\begin{equation*}
    A_2(u)=\frac{(1-e^{-u})u}{1-e^{-u}+u},\quad A_3(u) = \frac{(1+u)(1-e^{-u})-ue^{-u}}{2(1-e^{-u})}
\end{equation*}
one can checks that for all $u>0$, it holds $A_2(u)>0$, $A_3(u)>0$ and $A_1(u)\le A_2(u)\le A_3(u)$. Moreover, one gets the following sign tab.
\begin{center}
\begin{tikzpicture}
\tkzTabInit[lgt=2, espcl=1.5]{$A$ / 1 , {${C_b(A,u)}$} / 0.8, {${C_c(A,u)}$} / 0.8}{$-\infty$, $A_2(u)$, $A_3(u)$, $+\infty$}
\tkzTabLine{, +, t, +, z, -, }
\tkzTabLine{, -, z, +, t, +, }
\end{tikzpicture}
\end{center}
Hence, for all $A\le A_2(u)$, one has $v_1\le 0$ and $v_2> 0$. Since $A_1(u)\le A_2(u)$, and $C_a(u)<0$ for every $u>0$, one deduces that for all $A\in[A_1,A_2]$, $u>0$ and $v\in[v_1,v_2]$, one has $N(A,u,v)\ge 0$. Therefore, for every $s_1\le 0$ and every $\delta>0$, one has $Y(s_1)\ge 1$, whenever
\begin{equation}\label{eq:condition on s1 and a}
    \begin{cases}
        A_1(\tau\delta)\le\tau a\le A_2(\tau\delta)\\
        \mbox{and}\\
        v_1(\tau a,\tau\delta)\le\tau  s_1\le v_2(\tau a,\tau\delta).
    \end{cases}\qedhere
\end{equation}
\end{proof}
\begin{figure}
	\centering
	\includegraphics[width=.49\linewidth]{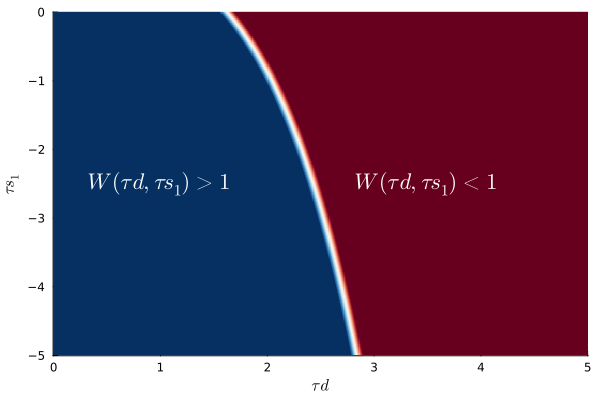}\hspace{0.1em}
	\includegraphics[width=.49\linewidth]{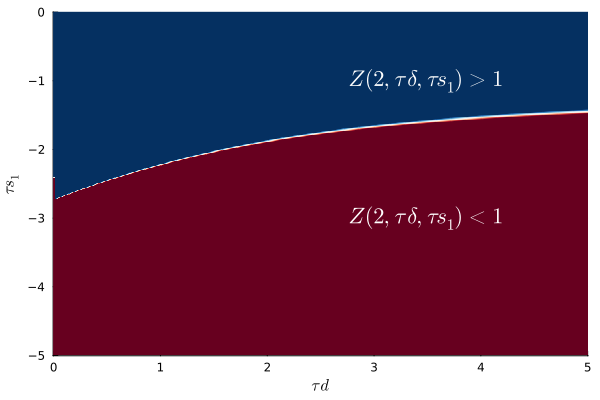}
 \vspace{-0.4cm}
	\caption{Depiction of the functions $W(\tau d, \tau s_1)$ and $Z(2, \tau\delta,\tau s_1)$ defined respectively in \eqref{eq:function V-1} and \eqref{eq:function Z}. The \textit{red}-colored region in the parameters space   $(\tau d, \tau s_1)$  corresponds to the region where Frasson-Verduyn Lunel sufficient condition for the dominance of a simple real spectral value is satisfied. The CRRID property extends the conditions by the \textit{blue} region.}
	\label{fig:heatmap-Z-W}
\end{figure}
Theorem~\ref{thm:beyond Frasson's Lemma ICRRID} shows that the sufficient condition of Lemma~\ref{lem:Frasson-Verduyn Lunel} is not necessary for the dominance of simple real spectral values when $\Delta$ admits exactly two simple real roots.

\begin{remark}
We stress that there are values for $s_1$ and $\delta$ (where $s_1 \le 0$ and $\delta > 0$) that satisfy \eqref{eq:condition on s1 and a} and still maintain inequality \eqref{eq:asymptotic beta 2RR}. From equation \eqref{eq:condition on s1 and a}, take, for instance,
\begin{equation}
    \tau a = A_2(\tau \delta), \qquad \text{and} \qquad \tau s_1 = v_1(\tau a, \tau \delta).
\end{equation}
    One finds, after careful computations, that
   \begin{equation}\label{eq:selected s1 and tau}
       \frac{a+s_1}{\delta} = \frac{A_2(\tau \delta)+v_1(\tau a, \tau \delta)}{\tau\delta} = \frac{1}{2}\left(\frac{2-e^{\tau\delta}}{1-e^{\tau\delta}}+\frac{1}{\tau\delta}\right).
   \end{equation}
  Finally, as a function of $\tau\delta>0$, one immediately checks that the right side in \eqref{eq:selected s1 and tau} is bounded between $1/2$ and $3/4$.
\end{remark}
\begin{remark}
   When two real roots are assigned, and a third root coexists (see, Theorem \ref{thm:condition of the domiancy 2 rr with 1RR suplementary}), one can use the exact same arguments as in the proof of Theorem \ref{thm:beyond Frasson's Lemma GCRRID} to show that in some parameters regions, the Frasson-Verduyn Lunel's criteria does not apply. Furthermore, when exactly two real spectral values $s_2<s_1$ coexist in the spectrum of $\Delta$, and 
   \begin{equation}
       \frac{-1}{e^{\tau(s_1-s_2)}-1}<\frac{a+s_1}{s_1-s_2}\le 0,
   \end{equation}
   then $s_1$ is a strictly dominant root of $\Delta$ by Theorem~\ref{thm:dominance of a real root 2 rr}, and we can prove as in Theorem~\ref{thm:beyond Frasson's Lemma ICRRID} that in some parameters regions the Frasson-Verduyn Lunel's criteria does not apply.
   \end{remark}

In the proof of Theorem~\ref{thm:beyond Frasson's Lemma ICRRID}, we introduced the function
\begin{equation}\label{eq:function Z}
\begin{alignedat}{2}
    {\delta}Z(\tau a,\tau \delta,\tau s_1)&= e^{-\tau\delta}(a+s_1-\delta)(-1+2\tau s_1)\\
    &\quad +(a+s_1)(1-2\tau s_1+\tau\delta).
\end{alignedat}
\end{equation}
When $a=2$ (admissible owing to \eqref{eq:condition on s1 and a}), we plotted the heatmap of $Z(2,\tau \delta,\tau s_1)$ in Figure~\ref{fig:heatmap-Z-W} (image on the \textit{right}) showing the regions $(\tau\delta,\tau s_1)$ where $Y(s_1)<1$ and $Y(s_1)\ge1$.

\subsection{Exponential Estimates}\label{ss:exponential estimates}

In \cite[ Section~6]{kharitonov2005lyapunov}, the author establishes exponential estimates of solutions for time delay systems of neutral-type using quadratic Lyapunov functionals and Lyapunov matrices. Although effective, this method may be computationally involved. Additionally, \cite{hale2013introduction}[Chapter~1, Theorem~7.6, page~32] also offers an exponential estimate for solutions of a neutral differential equation based on the characteristic quasipolynomial's rightmost root. The proof employs the Cauchy theorem of residues and involves complex analysis arguments, including the periodicity and analyticity of certain functions. However, the explanation may benefit from greater detail to enhance the reader's clarity and ease of understanding.

In the following, we provide an alternative proof of exponential estimates for solutions of \eqref{eq:NDE} that integrates the previously outlined spectral analysis of Sections~\ref{ss:3 DRSV} and~\ref{ss:2 DRSV}.

For all $t\ge 0$, $y_t\in C^0([-\tau, 0])$ denotes the history function, defined for all $\theta\in[-\tau,0]$ as
\begin{equation}
    y_t(\theta) = y(t+\theta),\quad\mbox{with}\quad\|y_t\|_\infty:=\sup_{\theta\in[-\tau, 0]}|y_t(\theta)|.
\end{equation}

\begin{theorem}\label{thm:exponential estimates of solutions 3rr}
    Assume that the quasipolynomial $\Delta$ admits three real spectral values $s_3<s_2<s_1$. For every $\varepsilon>0$, there exists a constant $k:=k(\varepsilon,s_1,s_2,s_3)\ge 1$ such that the solution $y(\cdot)$ of \eqref{eq:NDE} with initial condition $y_0\in C^0([-\tau, 0])$ satisfies
    \begin{equation}\label{eq:exponential estimates of solutions 3rr}
        |y(t)|\le k e^{(s_1+\varepsilon)t}\|y_0\|_\infty\quad (t\ge 0).
    \end{equation}
\end{theorem}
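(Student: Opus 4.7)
The proof strategy combines the Laplace transform of $y$ with the spectral picture from Section~\ref{ss:3 DRSV}. First, applying the Laplace transform to \eqref{eq:NDE} yields $\hat y(s)=P(s;y_0)/\Delta(s)$, where
$$P(s;y_0)=y_0(0)+\alpha y_0(-\tau)-(\alpha s+\beta)e^{-\tau s}\int_{-\tau}^0 e^{-su}y_0(u)\,du$$
depends affinely on $y_0$, satisfies $|P(s;y_0)|\le C(1+|s|)\|y_0\|_\infty$ uniformly on any vertical line, and tends to $0$ as $|\Im(s)|\to\infty$ by the Riemann--Lebesgue lemma. The Bromwich inversion formula then represents $y(t)$ as $(2\pi i)^{-1}\int_{c-i\infty}^{c+i\infty}e^{st}\hat y(s)\,ds$ for any $c$ above the spectral abscissa of $\Delta$.

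The key spectral input is that, by Theorem~\ref{thm:dominance of a real root 3 rr}, $s_1$ is strictly dominant, while Theorem~\ref{thm:remaining spectrum general 3RR} (together with its corollary and Lemma~\ref{lem:properties on alpha and beta GCRRID}) locates the remaining spectrum on or asymptotic to the vertical line $\Re(s)=\ln(\alpha)/\tau$, with $0<\alpha<e^{\tau s_1}$. In particular $\ln(\alpha)/\tau<s_1$, so for every $\varepsilon>0$ no root of $\Delta$ lies in the closed half-plane $\Re(s)\ge s_1+\varepsilon$. Moreover, the inequality $\alpha e^{-\tau(s_1+\varepsilon)}<e^{-\varepsilon\tau}<1$, combined with the asymptotic expansion $\Delta(s)=s\bigl(1+\alpha e^{-\tau s}\bigr)+O(1)$, gives a lower bound of the form $|\Delta(s_1+\varepsilon+i\omega)|\ge (1-e^{-\varepsilon\tau})|\omega|-C_\varepsilon$ for $|\omega|$ sufficiently large. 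Consequently, shifting the Bromwich contour from $\Re(s)=c$ to $\Re(s)=s_1+\varepsilon$ is legitimate and crosses no poles, whence
$$|y(t)|\le \frac{e^{(s_1+\varepsilon)t}}{2\pi}\int_{-\infty}^\infty |\hat y(s_1+\varepsilon+i\omega)|\,d\omega.$$

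The main obstacle lies in the convergence of this last integral, which is only $O(1)$ in $|\omega|$ a priori. It is circumvented by two successive integrations by parts in $\omega$ (using $e^{st}=(it)^{-1}\partial_\omega e^{st}$ and the analyticity of $\hat y$), which eliminate the boundary terms and leave an integrand of order $|\omega|^{-2}$ at the cost of an overall $1/t^2$ prefactor. This yields $|y(t)|\le k_1 t^{-2} e^{(s_1+\varepsilon)t}\|y_0\|_\infty$ for, say, $t\ge 1$; a direct Gronwall-type estimate applied to \eqref{eq:NDE} supplies $|y(t)|\le k_2\|y_0\|_\infty$ on $[0,1]$. Merging the two regimes produces \eqref{eq:exponential estimates of solutions 3rr}, the constant $k=k(\varepsilon,s_1,s_2,s_3)$ being controlled by the spectral gap $1-e^{-\varepsilon\tau}$ and by the coefficients $\alpha,\beta,a$ provided by Lemma~\ref{lem:coeffs}. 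Extension from smooth data to $y_0\in C^0([-\tau,0])$ follows by a standard density argument.
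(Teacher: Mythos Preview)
There is a genuine gap in the integration-by-parts step. For a neutral quasipolynomial $\Delta(s)=s+a+e^{-\tau s}(\alpha s+\beta)$ with $\alpha\neq0$, the derivative $\Delta'(s)=1+e^{-\tau s}\bigl(\alpha-\tau(\alpha s+\beta)\bigr)$ grows like $|\omega|$ on any vertical line $\Re(s)=c$, because the term $\tau\alpha s\,e^{-\tau s}$ has modulus $\tau|\alpha|e^{-\tau c}|\omega|$. Since $|\Delta(s)|$ itself grows like $|\omega|$, the logarithmic derivative $\Delta'/\Delta$ is merely bounded, so $\partial_\omega(1/\Delta)=-\Delta'/\Delta^2=O(|\omega|^{-1})$, and by the same mechanism $\partial_\omega^k(1/\Delta)=O(|\omega|^{-1})$ for every $k\ge1$: no additional decay is gained by further differentiation. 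The same obstruction applies to $\partial_\omega^k\hat y$, since the integral term in your $P(s;y_0)$ carries the factor $\alpha s+\beta$ and thus contributes at order $|\omega|$ as well (your Riemann--Lebesgue claim that $P\to0$ is also incorrect: the constant $y_0(0)+\alpha y_0(-\tau)$ remains, and for merely continuous $y_0$ the product $(\alpha s+\beta)\int_{-\tau}^0 e^{-su}y_0(u)\,du$ has no guaranteed decay). Consequently the asserted $O(|\omega|^{-2})$ integrand after two integrations by parts is false, the Bromwich integral on $\Re(s)=s_1+\varepsilon$ is not rendered absolutely convergent by this route, and the density argument cannot recover a constant depending only on $\|y_0\|_\infty$.

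The paper circumvents this neutral obstruction by a structurally different argument. Working with $z=\cL^{-1}\{1/\Delta\}$, it uses the algebraic identity
\[
\frac{1+\alpha e^{-\tau s}}{\Delta(s)}\;=\;\zeta\,\frac{1-e^{-\tau(s-s_1)}}{(s-s_1)\Delta(s)}\;+\;\frac{1}{s-s_1},
\]
whose right-hand side has a bounded numerator over a denominator of order $|\omega|^2$ in the first term. Taking inverse transforms yields $z(t)+\alpha z(t-\tau)=e^{s_1t}+(\text{term bounded by }k_0e^{(s_1+\varepsilon)t})$, and the bound on $|z(t)|$ itself then follows by iterating the resulting inequality as a geometric series in $\alpha e^{-\tau(s_1+\varepsilon)}\in(0,1)$ (Lemma~\ref{lem:properties on alpha and beta GCRRID}). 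The deliberate passage through the neutral difference $z(t)+\alpha z(t-\tau)$, which cancels the problematic $\alpha s\,e^{-\tau s}$ contribution, is precisely the idea missing from your approach.
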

\begin{proof}
    Applying the Laplace transform to both sides of equation \eqref{eq:NDE}, one gets
    \begin{equation}
        \hat{y}(s) = \frac{y_0(0)+\alpha y_0(-\tau)}{\Delta(s)}\quad (s\in\C)
    \end{equation}
    showing that $ \hat{y}$ is an analytic function of $s$ for $\Re(s)>s_1$. 
    Let $\varepsilon>0$ and set $c_1:=s_1+\varepsilon$. Then, the function $y$ is given by the Bromwich complex contour integral for all $t\ge 0$
\begin{equation}\label{eq: inverse Laplace transform}
    y(t) = \frac{(y_0(0)+\alpha y_0(-\tau))}{2i\pi}\lim\limits_{T\to\infty}\int_{c_1-iT}^{c_1+iT}\frac{e^{st}}{\Delta(s)}ds.
\end{equation}
Indeed, let $T>0$ and $c_2>c_1$, and consider the integration of the function $e^{ts}/\Delta(s)$ over the closed rectangle $\Gamma$ in the complex plane with vertical boundaries $V_1:=\{c_1+i\omega\mid -T\le\omega\le T\}$ and $V_2:=\{c_2+i\omega\mid -T\le\omega\le T\}$, and horizontal boundaries $H_1 :=\{x+iT\mid c_1\le x\le c_2\}$ and $H_2:=\{x-iT\mid c_1\le x\le c_2\}$. Since $\Delta$ has no zeroes inside the rectangle $\Gamma$, the integral over $\Gamma$ equals zero. It is then sufficient to show the following.
\begin{equation}\label{eq:vanishing of H_j}
    \int_{H_j}\frac{e^{st}}{\Delta(s)}ds\to 0\qquad\mbox{as}\qquad T\to\infty\quad  (j=1,2).
\end{equation}
 For $s=x+iT$ with $c_1\le x\le c_2$ and $T>0$, one has $\Delta(s) = (1+\alpha e^{-\tau s})s+(a+\beta e^{-\tau s})$. Since $1-\alpha e^{-\tau x}>0$ thanks to $s_1<c_1\le x\le c_2$ and Lemma~\ref{lem:properties on alpha and beta GCRRID}, one chooses $T_0>0$ large enough to have
 \begin{equation}\label{eq:T_0 in the case of 3rr}
     \frac{T}{2}(1-\alpha e^{-\tau x})\ge |a|+|\beta|e^{-\tau c_2} \quad(\forall T\ge T_0).
 \end{equation}
 It follows that for all $T\ge T_0$, it holds
 \begin{equation}
     |\Delta(s)|\ge T(1-\alpha e^{-\tau x})-(|a|+|\beta|e^{-\tau x})\ge\frac{T}{2}(1-\alpha e^{-\tau x}).
 \end{equation}
Therefore, one has
 \[
 \left|\int_{H_1}\frac{e^{st}}{\Delta(s)}ds\right|\le\frac{2e^{tc_2}}{T}\int_{c_1}^{c_2}\frac{e^{\tau x}}{(e^{\tau x}-\alpha)}dx=\frac{2}{T}\frac{e^{tc_2}}{\tau}\ln\left(\frac{e^{\tau c_2}-\alpha}{e^{\tau c_1}-\alpha}\right)
 \]
 which tends to $0$ as $T\to\infty$. 
 In the same fashion, one proves that the integral over $H_2$ tends to $0$ when $T\to\infty$. It follows that \eqref{eq: inverse Laplace transform} defines properly the signal $y(t)$ for every $t\ge 0$. One claims that if $s=c_1+iT$, then
\begin{equation}\label{eq:useful large T}
    |\Delta(s)|\ge\frac{|T|}{2}(1-\alpha e^{-\tau s_1}),\quad\forall\;|T|\ge\frac{4\zeta}{1-\alpha e^{-\tau s_1}}=:T_1
\end{equation}
where $\zeta>0$ is defined by \eqref{eq:function zeta} and $1-\alpha e^{-\tau s_1}>0$ owing to Lemma~\ref{lem:properties on alpha and beta GCRRID}.  Indeed, since $a=-s_1-\zeta$ and $\beta=-\alpha s_1+\zeta e^{\tau s_1}$, one has  $\Delta(s) = (1+\alpha e^{-\tau c_1}e^{-i\tau T})(\varepsilon+iT)-\zeta(1-e^{-\tau\varepsilon}e^{-i\tau T})$. Therefore, 
\[
|\Delta(s)|\ge |T|(1-\alpha e^{-\tau c_1})-\zeta(1+e^{-\tau\varepsilon})\ge |T|(1-\alpha e^{-\tau s_1})-2\zeta
\]
thanks to the reverse triangle inequality, which completes the proof of the claim. Setting  for all $t\ge 0$,
\begin{equation}\label{eq:z(t)}
    z(t) := \cL^{-1}\left\{\frac{1}{\Delta(s)}\right\}(t) = \frac{1}{2i\pi}\lim\limits_{T\to\infty}\int_{c_1-iT}^{c_1+iT}\frac{e^{st}}{\Delta(s)}ds 
\end{equation}
one immediately observes that for every $s\in\C$ such that $\Re(s)>s_1$, one has
\[
\frac{1+\alpha e^{-\tau s}}{\Delta(s)} = \zeta\frac{1-e^{-\tau(s-s_1)}}{(s-s_1)\Delta(s)}+\frac{1}{s-s_1}
\]
so that, taking the inverse Laplace transform and the fact that $\cL^{-1}\{1/(s-s_1)\}=e^{s_1t}$, one obtains
\begin{equation}\label{eq:useful with z 3rr}
    z(t)+\alpha z(t-\tau) = \zeta\cL^{-1}\left\{\frac{1-e^{-\tau(s-s_1)}}{(s-s_1)\Delta(s)}\right\}(t)+e^{s_1 t}.
\end{equation}
Thanks to \eqref{eq:useful large T}, one has
\begin{eqnarray}\label{eq:useful intermediate 3rr}
\hspace{-1.5cm} \Lambda_4&:=&\hspace{-0.3cm}\left|\zeta\cL^{-1}\left\{\frac{1-e^{-\tau(s-s_1)}}{(s-s_1)\Delta(s)}\right\}(t)\right|\nonumber\\
    &\le&\hspace{-0.3cm}\frac{\zeta e^{t c_1}}{\pi}\int_{-T_1}^{T_1}\frac{dT}{|\Delta(c_1+iT)|\sqrt{\varepsilon^2+T^2}}\nonumber\\
    &&+\frac{4\zeta e^{t c_1}}{\pi(1-\alpha e^{-\tau s_1})}\int_{T_1}^{\infty}\frac{dT}{T^2}\nonumber\\
    &=&\hspace{-0.3cm}\frac{\zeta e^{t c_1}}{\pi}\hspace{-0.15cm}\int_{-T_1}^{T_1}\frac{dT}{|\Delta(c_1+iT)|\sqrt{\varepsilon^2+T^2}}+\frac{e^{t c_1}}{\pi}\le k_0e^{tc_1}
\end{eqnarray}
where $k_0:=k_0(\varepsilon,s_1,s_2,s_3)>0$ satisfies
\begin{equation}
    \begin{aligned}
        k_0\le\frac{1}{\pi}\left(1+2\zeta T_1\min_{|T|\le T_1}(|\Delta(c_1+iT)|\sqrt{\varepsilon^2+T^2})\right).
    \end{aligned}
\end{equation}
Combining \eqref{eq:useful with z 3rr} and \eqref{eq:useful intermediate 3rr}, one obtains 
\begin{equation}
    |z(t)|-\alpha |z(t-\tau)|\le|z(t)+\alpha z(t-\tau)|\le (1+k_0)e^{(s_1+\varepsilon)t}
\end{equation}
which yields 
\begin{equation}\label{eq:estimate z(t) 3rr}
    |z(t)|\le (1+k_0)e^{(s_1+\varepsilon)t}\sum_{j=0}^{\infty}\alpha^je^{-j(s_1+\varepsilon)\tau} = \frac{(1+k_0)e^{(s_1+\varepsilon)t}}{1-\alpha e^{-\tau(s_1+\varepsilon)}}
\end{equation}
since $0<1-\alpha e^{-\tau(s_1+\varepsilon)}<1$ owing to Lemma~\ref{lem:properties on alpha and beta GCRRID}. As a consequence,
the result follows since
\begin{equation*}\label{eq:estimate y(t) 3rr}
   |y(t)| = |(y_0(0)+\alpha y_0(-\tau))||z(t)|\le \frac{(1+k_0)(1+\alpha)}{1-\alpha e^{-\tau(s_1+\varepsilon)}}e^{(s_1+\varepsilon)t}\|y_0\|_\infty
\end{equation*}
and $k(\varepsilon,s_1,s_2,s_3):={(1+k_0)(1+\alpha)}/{(1-\alpha e^{-\tau(s_1+\varepsilon)})}\ge 1$.
\end{proof}

\begin{remark}
%
It is important to note that \eqref{eq:exponential estimates of solutions 3rr} only make sense when $k\ge 1$. The GCRRID setting derived in Section~\ref{ss:3 DRSV} allows us to explicitly determine $k\ge 1$, a property which is not explicitly stated in \cite{hale2013introduction}[Chapter~1, Theorem~7.6, page~32].
\end{remark}

{Even when two real spectral values coexist in the spectrum of $\Delta$, one has the same exponential estimates as stated in what follows. The proof is a slight adaptation of that of Theorem~\ref{thm:exponential estimates of solutions 3rr}, but we present it for completeness.
\begin{proposition}\label{pro:exponential estimates of solutions 2rr}
    Assume that $\Delta$ has two real spectral values $s_2<s_1$. For every $\varepsilon>0$, there exists a constant $k:=k(\varepsilon,s_1,s_2)\ge 1$ such that the solution $y(\cdot)$ of \eqref{eq:NDE} with initial condition $y_0\in C^0([-\tau, 0])$ satisfies
     \begin{equation}\label{eq:exponential estimates of solutions 2rr}
        |y(t)|\le k e^{(s_1+\varepsilon)t}\|y_0\|_\infty\quad (t\ge 0).
    \end{equation}
\end{proposition}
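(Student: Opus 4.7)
The plan is to adapt the argument of Theorem~\ref{thm:exponential estimates of solutions 3rr} almost verbatim, being careful about the two places where positivity of $\alpha$ and of $-(a+s_1)=\zeta$ was tacitly used. Starting from the Laplace representation $\hat{y}(s)=(y_0(0)+\alpha y_0(-\tau))/\Delta(s)$, I would recover $y(t)$ through a Bromwich contour along the vertical line $\Re(s)=c_1:=s_1+\varepsilon$, exploiting the fact that, by Theorem~\ref{thm:dominance of a real root 2 rr}, $s_1$ is a strictly dominant root so $\hat{y}$ is analytic on $\Re(s)>s_1$.

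The key algebraic identity
\[
\Delta(s)=(1+\alpha e^{-\tau s})(s-s_1)+(a+s_1)\bigl(1-e^{-\tau(s-s_1)}\bigr)
\]
follows solely from $\Delta(s_1)=0$, so it persists in the two-real-roots setting. I would then rewrite
\[
\frac{1+\alpha e^{-\tau s}}{\Delta(s)} = \frac{1}{s-s_1} - \frac{(a+s_1)\bigl(1-e^{-\tau(s-s_1)}\bigr)}{(s-s_1)\Delta(s)},
\]
and, inverting the Laplace transform, obtain the analogue of \eqref{eq:useful with z 3rr}: $z(t)+\alpha z(t-\tau)=e^{s_1 t}-(a+s_1)\,\cL^{-1}\{\,\cdot\,\}(t)$, where $z:=\cL^{-1}\{1/\Delta\}$.

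The crucial spectral input of Theorem~\ref{thm:exponential estimates of solutions 3rr} was $0<\alpha<e^{\tau x}$ for $x\ge s_1$, supplied by Lemma~\ref{lem:properties on alpha and beta GCRRID}. Here Lemma~\ref{lem:properties on alpha and beta ICRRID} delivers instead $-e^{\tau x}<\alpha<0$ for every $x\ge s_1$, hence $|\alpha|e^{-\tau x}<1$; this is exactly the amount of control required, since the unconditional bound $|1+\alpha e^{-\tau s}|\ge 1-|\alpha|e^{-\tau x}$ survives along any vertical line $\Re(s)=x\ge s_1$. Consequently the vanishing of the horizontal-boundary integrals goes through unchanged, and the large-$|T|$ estimate becomes
\[
|\Delta(c_1+iT)|\ge \tfrac{|T|}{2}\bigl(1-|\alpha|e^{-\tau s_1}\bigr),\qquad |T|\ge T_1,
\]
with $T_1:=4|a+s_1|/(1-|\alpha|e^{-\tau s_1})$; the substitution $|a+s_1|$ for $\zeta$ is necessary because, unlike in the GCRRID regime, the sign of $a+s_1$ is not fixed.

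The remainder is cosmetic: the above bound yields a constant $k_0=k_0(\varepsilon,s_1,s_2)>0$ with $|(a+s_1)\,\cL^{-1}\{(1-e^{-\tau(s-s_1)})/[(s-s_1)\Delta(s)]\}(t)|\le k_0 e^{c_1 t}$. Since $|z(t)+\alpha z(t-\tau)|\ge |z(t)|-|\alpha|\,|z(t-\tau)|$ regardless of the sign of $\alpha$, iterating and summing the geometric series $\sum_{j\ge 0}|\alpha|^je^{-j(s_1+\varepsilon)\tau}$---convergent thanks again to $|\alpha|<e^{\tau s_1}$---gives
\[
|z(t)|\le\frac{(1+k_0)\,e^{(s_1+\varepsilon)t}}{1-|\alpha|e^{-\tau(s_1+\varepsilon)}},
\]
and hence \eqref{eq:exponential estimates of solutions 2rr} with $k:=(1+k_0)(1+|\alpha|)/(1-|\alpha|e^{-\tau(s_1+\varepsilon)})\ge 1$. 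The only delicate step is the bookkeeping: wherever the original proof wrote $\alpha$ or $\zeta$ under an implicit positivity hypothesis, one must substitute $|\alpha|$ or $|a+s_1|$ to accommodate the sign changes permitted by the ICRRID assignment.
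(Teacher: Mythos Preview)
Your core argument is correct and mirrors the paper's main case: the same Laplace-transform representation of $y$, the same lower bound on $|\Delta(c_1+iT)|$ for large $|T|$, and the same geometric-series iteration for $|z(t)|$. The only substantive variation is that you decompose $\Delta$ around $s_1$ and work with $|a+s_1|$, whereas the paper decomposes around $s_2$ and uses the positive constant $\kappa=(\alpha+e^{\tau s_1})/(\tau F_{-\tau,1}(s_1,s_2))$, which spares the absolute-value bookkeeping you flag at the end. Both routes land on the same bound for $|z(t)|$.

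There is, however, a genuine gap in coverage. You invoke Theorem~\ref{thm:dominance of a real root 2 rr} and Lemma~\ref{lem:properties on alpha and beta ICRRID} as if they deliver strict dominance of $s_1$ and $-e^{\tau s_1}<\alpha<0$ outright from the bare hypothesis ``$\Delta$ has two real spectral values $s_2<s_1$''; but both results are biconditionals, not direct implications. The paper handles this via an explicit case split: (i) if a third simple real root coexists, appeal to Theorem~\ref{thm:exponential estimates of solutions 3rr}; (ii) if $\alpha=0$, cite the retarded-case estimate; (iii) if $-e^{\tau s_1}<\alpha<0$, run the argument you wrote; (iv) if $\alpha=-e^{\tau s_1}$, a separate (mutatis mutandis) treatment is needed---and indeed your $T_1=4|a+s_1|/(1-|\alpha|e^{-\tau s_1})$ is undefined in that boundary case. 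Your proposal silently assumes case~(iii) throughout; adding the case analysis would complete it.
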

\begin{proof}
    First, if $s_1$ and $s_2$ are simple, then a third simple real root $s_3$ may coexist in the spectrum of $\Delta$, in which case the result follows by Theorem~\ref{thm:exponential estimates of solutions 3rr}. If such an $s_3$ does not exist, then $s_1$ may be a double real root but remains dominant by Corollary~\ref{cor:condition of the dominance 1rr with multiplicity 2}, and one has $-e^{\tau s_1}\le\alpha\le 0$.\\ \underline{If $\alpha=0$}, then we are in the retarded case, and the exponential estimate \eqref{eq:exponential estimates of solutions 2rr} is proven in \cite{hale2013introduction}[Chapter~1, Theorem~5.2, page~20].\\ 
    \underline{If $-e^{\tau s_1}<\alpha< 0$}, then $s_2<s_1$ are the only simple real spectral values in the spectrum of $\Delta$ and $s_1$ is dominant by Theorem~\ref{thm:dominance of a real root 2 rr}. Then, to prove \eqref{eq:exponential estimates of solutions 2rr}, one argues as in the proof of Theorem~\ref{thm:exponential estimates of solutions 3rr} as follows. Let $\varepsilon>0$ and set $c_1:=s_1+\varepsilon$, and $c_2>c_1$. To prove \eqref{eq:vanishing of H_j}, one can replace \eqref{eq:T_0 in the case of 3rr} by 
    \begin{equation}\label{eq:T_0 in the case of 2rr}
        \frac{T}{2}(1+\alpha e^{-\tau x})\ge |a|+|\beta|e^{-\tau c_2},\qquad \forall T\ge T_0
    \end{equation}
    with $T_0>0$ large enough. In fact, for $s=x+iT$ with $s_1<c_1\le x\le c_2$, one gets from $\Delta(s) = (1+\alpha e^{-\tau s})s+(a+\beta e^{-\tau s})$ that 
    \begin{eqnarray}\label{eq:proof of T_0 in the case of 2rr}
        \hspace{-0.5cm}|\Delta(s)|&\ge&T\sqrt{1+2\alpha e^{-\tau x}\cos(\tau T)+\alpha^2e^{-2\tau x}}-(|a|+|\beta|e^{-\tau x})\nonumber\\
        &\ge&T(1+\alpha e^{-\tau x})-(|a|+|\beta|e^{-\tau x})\ge\frac{T}{2}(1+\alpha e^{-\tau x})\nonumber
    \end{eqnarray}
    owing to \eqref{eq:T_0 in the case of 2rr}, since $\alpha<0$ and $1+\alpha e^{-\tau x}>0$ by \eqref{eq:asymptotic alpha 2RR}. Therefore, one deduces
     \[
 \left|\int_{H_1}\frac{e^{st}}{\Delta(s)}ds\right|\le\frac{2e^{tc_2}}{T}\int_{c_1}^{c_2}\frac{e^{\tau x}}{(e^{\tau x}+\alpha)}dx=\frac{2}{T}\frac{e^{tc_2}}{\tau}\ln\left(\frac{e^{\tau c_2}+\alpha}{e^{\tau c_1}+\alpha}\right)
 \]
 which yields \eqref{eq:vanishing of H_j}. By \eqref{eq:asymptotic alpha 2RR}, one has
 \begin{equation}\label{eq:kappa}
     \kappa:=\frac{\alpha+e^{\tau s_1}}{\tau F_{-\tau, 1}(s_1,s_2)}>0
 \end{equation}
 where $F_{-\tau, 1}(s_1,s_2)$ is defined by \eqref{eq:F 1}. Then, if $s=c_1+iT$, one can replace \eqref{eq:useful large T} by
 \begin{equation}\label{eq:useful large T 2rr}
    |\Delta(s)|\ge\frac{|T|}{2}(1+\alpha e^{-\tau s_1}),\quad\forall\;|T|\ge\frac{4\kappa}{1+\alpha e^{-\tau s_1}}=:T_2.
\end{equation}
In fact, one obtains from \eqref{eq:beta 2 rr} and \eqref{eq:a and alpha} that
 $a=-s_2-\kappa$ and $\beta = -\alpha s_2+\kappa e^{\tau s_2}$ so that $\Delta(s) = (1+\alpha e^{-\tau c_1}e^{-i\tau T})(\delta+\varepsilon+iT)-\kappa(1-e^{-\tau(\delta+\varepsilon)}e^{-i\tau T})$ where $\delta:=s_1-s_2>0$. It follows that
 \[
|\Delta(s)|\ge |T|(1+\alpha e^{-\tau c_1})-\kappa(1+e^{-\tau(\delta+\varepsilon)})\ge |T|(1+\alpha e^{-\tau s_1})-2\kappa
\]
as in the proof of \eqref{eq:T_0 in the case of 2rr}. Hence, \eqref{eq:useful large T 2rr} follows immediately. Now, let $t\ge 0$ and introduce $z(t)$ as in \eqref{eq:z(t)}. One has from  $a=-s_2-\kappa$ and $\beta = -\alpha s_2+\kappa e^{\tau s_2}$ the following
\begin{equation}
    \frac{1+\alpha e^{-\tau s}}{\Delta(s)} = \kappa\frac{1-e^{-\tau(s-s_2)}}{(s-s_2)\Delta(s)}+\frac{1}{s-s_2}
\end{equation}
for every $s\in\C$ such that $\Re(s)>s_1$. Therefore,
\begin{equation}\label{eq:useful with z 2rr}
    z(t)+\alpha z(t-\tau) = \kappa\cL^{-1}\left\{\frac{1-e^{-\tau(s-s_2)}}{(s-s_2)\Delta(s)}\right\}(t)+e^{s_2 t}.
\end{equation}
Arguing as in the proof of \eqref{eq:useful intermediate 3rr}, one obtains from \eqref{eq:useful large T 2rr} that 
\begin{equation}\label{eq:useful intermediate 2rr}
    \left|\kappa\cL^{-1}\left\{\frac{1-e^{-\tau(s-s_2)}}{(s-s_2)\Delta(s)}\right\}(t)\right|\le k_1e^{tc_1}
\end{equation}
where $k_1:=k_1(\varepsilon,s_1,s_2)>0$ satisfies
\begin{equation*}
    \begin{aligned}
        k_1\le\frac{1}{\pi}\left(1+2\kappa T_2\min_{|T|\le T_2}(|\Delta(c_1+iT)|\sqrt{(s_1-s_2+\varepsilon)^2+T^2})\right).
    \end{aligned}
\end{equation*}
Therefore, \eqref{eq:useful with z 2rr} and \eqref{eq:useful intermediate 2rr}, yield
\begin{equation}
    |z(t)|+\alpha|z(t-\tau)|\le|z(t)+\alpha z(t-\tau)|\le (1+k_1)e^{(s_1+\varepsilon)t}
\end{equation}
since $\alpha<0$ by \eqref{eq:asymptotic alpha 2RR}. It follows that
\begin{equation*}\label{eq:estimate z(t) 2rr}
    |z(t)|\le (1+k_1)e^{(s_1+\varepsilon)t}\sum_{j=0}^{\infty}(-\alpha)^je^{-j(s_1+\varepsilon)\tau} = \frac{(1+k_1)e^{(s_1+\varepsilon)t}}{1+\alpha e^{-\tau(s_1+\varepsilon)}}
\end{equation*}
since $0<1+\alpha e^{-\tau(s_1+\varepsilon)}<1$ by \eqref{eq:asymptotic alpha 2RR}. This proves \eqref{eq:exponential estimates of solutions 2rr} since
\begin{equation*}\label{eq:estimate y(t) 2rr}
   |y(t)| = |(y_0(0)+\alpha y_0(-\tau))||z(t)|\le \frac{(1+k_1)(1+|\alpha|)}{1+\alpha e^{-\tau(s_1+\varepsilon)}}e^{(s_1+\varepsilon)t}\|y_0\|_\infty
\end{equation*}
and $k(\varepsilon,s_1,s_2):={(1+k_1)(1+|\alpha|)}/{(1+\alpha e^{-\tau(s_1+\varepsilon)})}\ge 1$.\\
Finally, if $\alpha=-e^{\tau s_1}$, the proof follows \emph{mutatis mutandis} the arguments.
\end{proof}
}

\section{Application to the prescribed local exponential stabilization of a one-layer neural network and beyond}\label{s:application to neural network modelling}

{Building on the spectral theory developed in the previous sections, we apply our theoretical insights to the practical design of an exponentially stable one-layer neural network subject to a delayed PD controller in Section~\ref{ss:stability analysis using PD controller}. Section~\ref{ss:comparison with the P controller} demonstrates that while the proportional (P) controller can achieve the same decay rate as the PD controller, it requires higher gain and only allows for a shorter maximum delay.}


\subsection{Implementing delayed PD controller}\label{ss:stability analysis using PD controller}

In this section, based on the CRRID setting of Section~\ref{ss:3 DRSV}, we practically implement a delayed PD controller to enhance the stability of the Hopfield neural network when $\nu \ge \mu$ and to achieve stabilization when $\nu < \mu$. This is accomplished by determining the gain parameters $k_p$, $k_d$, and the delay $\tau > 0$, with $\mu$ and $\nu$ being known. 


\begin{figure*}[t]
\centering
\includegraphics[width=.45\linewidth]{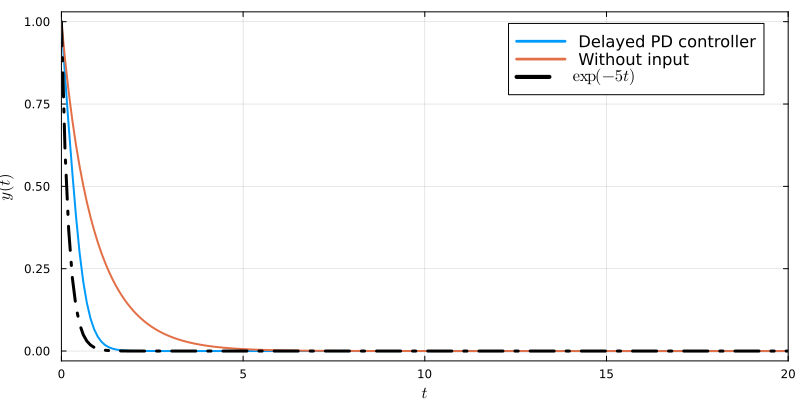}\hspace{0.1em}
\includegraphics[width=.45\linewidth]{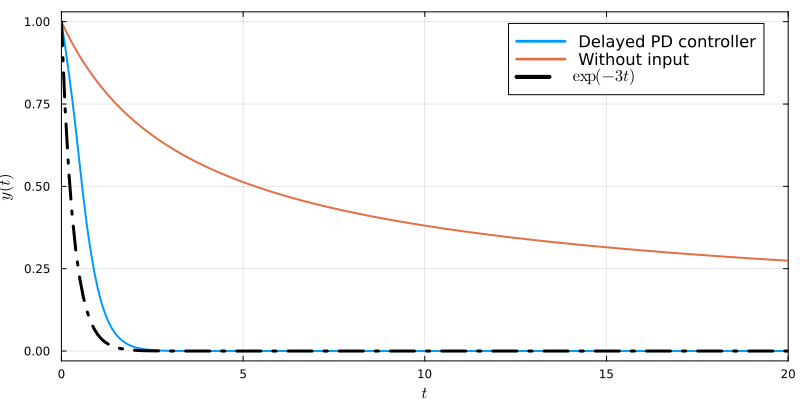}
\vspace{-0.4cm}
\caption{Solutions to \eqref{eq:nu grand que mu} (\textit{left}) and  \eqref{eq:nu egal mu} (\textit{right}) when $I=0$ and when $I$ is the delayed PD controller given by \eqref{eq:feedback control} where $\tau$, $k_d$ and $k_p$ are given by \eqref{eq:coeffs when nu grand que mu-2} (\textit{left}) and \eqref{eq:coeffs when nu egal mu} (\textit{right}) respectively. The initial condition is taken as $y_0 = 1$ for the solution with $I=0$, and $y_0(t)= 1+sin(t)$ for the solution with the delayed PD controller.}
\label{fig:nu-grand_equal-mu}
\end{figure*}

\subsubsection{Improving the decay rate of an exponentially stable one-layer neural network}
Consider the one-layer neural network of the form
\begin{equation}\label{eq:nu grand que mu}
    \dot{y}(t) = - 2y(t) + \tanh(y(t)) + I(t),
\end{equation}
which is a particular case of \eqref{eq:hopfield model nonlinear intrinsec} with $\nu=2$ and $\mu=1$. Under no external input ($I(t) = 0$), all solutions of \eqref{eq:nu grand que mu} are globally exponentially stable, meaning that the trivial equilibrium is globally exponentially stable with a decay rate equal to $\nu - \mu = 1$. A classical control problem is to choose the control $I(t)$ in the feedback form to improve (locally at least) the stability properties of \eqref{eq:nu grand que mu}. By letting $I(t) = -k_p y(t-\tau) - k_d \dot{y}(t-\tau)$ and linearizing around zero, the problem reduces to studying the localization of the spectrum of the quasipolynomial function $Q_0(s) = s+1+e^{-\tau s}(k_d s+k_p)$.

To simplify the control design, we assign three equidistributed real spectral values $s_1=-5$, $s_2=-6$, and $s_3=-7$ to $Q_0$. Then, Lemma~\ref{lem:coeffs}, yields
\begin{equation}\label{eq:coeffs when nu grand que mu-2}
    \tau=\ln\left(3/2\right),\quad k_d = e^{-6\tau},\quad k_p = e^{-6\tau}\left(6+\coth\left(\tau/2\right)\right)
\end{equation}
where $\coth$ is the hyperbolic cotangent function.

As per the CRRID properties provided in Section~\ref{ss:3 DRSV}, the parameters in \eqref{eq:coeffs when nu grand que mu-2} ensure the local exponential stability of solutions to equation \eqref{eq:nu grand que mu} with the PD controller $I(t) = -k_p y(t-\tau) - k_d \dot{y}(t-\tau)$, with a decay rate equal to $-5+\varepsilon$, for a sufficiently small $\varepsilon > 0$, as stated in Theorem~\ref{thm:exponential estimates of solutions 3rr}. Refer to Figure~\ref{fig:nu-grand_equal-mu} (image on the \textit{left}) for a visualization.

\subsubsection{Stabilizing an asymptotically stable one-layer neural network exponentially}
Consider the one-layer neural network described by
\begin{equation}\label{eq:nu egal mu}
    \dot{y}(t) = - y(t) + \tanh(y(t)) + I(t).
\end{equation}
This equation corresponds to a specific case of \eqref{eq:hopfield model nonlinear intrinsec} when $\nu = \mu = 1$. Using a Lyapunov function \cite{hopfield1984neurons}, it can be verified that all solutions of this equation asymptotically converge to zero when $I(t) = 0$. To ensure that the solutions converge to zero with a prescribed exponential decay rate, we control the system using the PD controller \eqref{eq:feedback control}. Linearizing around zero reduces the problem to studying the spectrum of the quasipolynomial $Q_1(s) = s+e^{-\tau s}(k_d s+k_p)$.

Assuming three non-equidistributed real spectral values $s_3 < s_2 < s_1$, we let $s_1 = -3$, $s_2 = -4$, and $s_3 = -6$. Using Lemma~\ref{lem:coeffs}, we find:
\begin{equation}\label{eq:coeffs when nu egal mu}
    \tau = \ln\left(\frac{1+\sqrt{5}}{2}\right),\; k_d = -20 + 9\sqrt{5},\; k_p = -66 + 30\sqrt{5}.
\end{equation}
Figure~\ref{fig:nu-grand_equal-mu} (image on the \textit{right}) illustrates the solutions of \eqref{eq:nu egal mu} with and without the PD controller.

\begin{figure*}[t]
   \centering
\includegraphics[width=.45\linewidth]{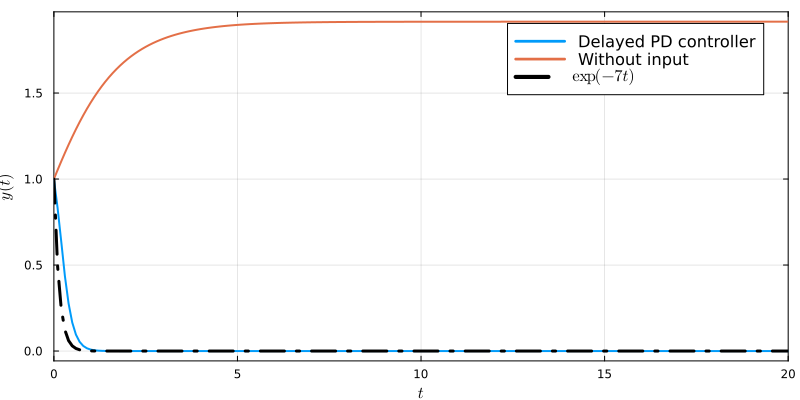}\hspace{0.1em}
    \includegraphics[width=.45\linewidth]{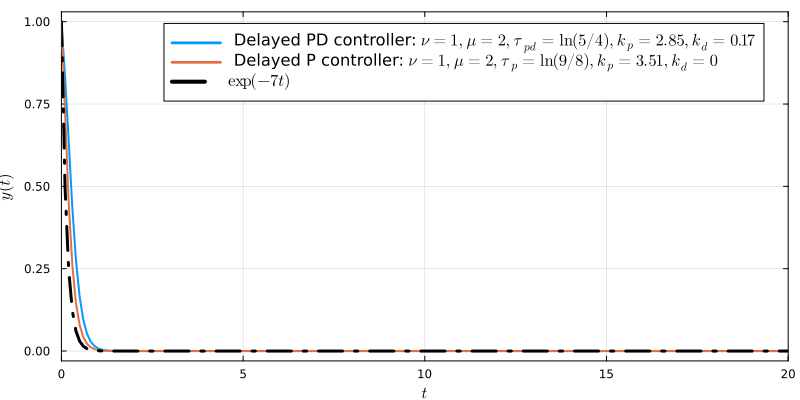}
    \caption{
    On the \textit{left}, solutions to \eqref{eq:nu smaller than mu} when $I=0$ and when $I$ is the delayed PD controller \eqref{eq:feedback control} where $\tau$, $k_d$, and $k_p$ are given by \eqref{eq:coeffs when nu smaller than mu}. 
    { On the \textit{right}, a comparison between the solution to \eqref{eq:nu smaller than mu} with the delayed P and PD controllers. This shows whether they achieve exponential stabilization with the same decay, the total gains of the delayed PD controller are less than that of the delayed P controller. Moreover, the delayed PD controller allows for larger delays than the delayed P controller. See Section~\ref{ss:comparison with the P controller}.} 
    The initial condition is taken as $y_0 = 1$ for the solution with $I=0$, and $y_0(t)= 1+sin(t)$ for the solution with the delayed P and PD controllers. 
    }
    \label{fig:nu-small_control-mu}
\end{figure*}

\subsubsection{Stabilizing an unstable one-layer neural network exponentially}\label{sss:stabilizing an unstable equation}
This section focuses on the scenario where the interaction strength surpasses the neural system's natural decay rate, leading to inherent instability. Specifically, consider the one-layer neural network
\begin{equation}\label{eq:nu smaller than mu}
    \dot{y}(t) = - y(t) + 2\tanh(y(t)) + I(t),
\end{equation}
which is a particular case of \eqref{eq:hopfield model nonlinear intrinsec} with $\nu = 1$ and $\mu = 2$. Linear stability analysis suggests that the trivial equilibrium of \eqref{eq:nu smaller than mu} is unstable when $I = 0$. To stabilize the system, we apply a PD controller \eqref{eq:feedback control}.

By linearizing the equation, the local asymptotic behavior of solutions is equivalent to studying the spectrum of the quasipolynomial function $Q_2(s) = s-1+e^{-\tau s}(k_d s+k_p)$.

Assuming three equidistributed real spectral values $s_1=-7$, $s_2=-8$, and $s_3=-9$, Lemma~\ref{lem:coeffs} yields
\begin{equation}\label{eq:coeffs when nu smaller than mu}
    \tau=\ln\left(5/4\right),\quad k_d = e^{-8\tau},\quad k_p = e^{-8\tau}\left(8+\coth\left(\tau/2\right)\right)
\end{equation}
where $\coth$ is the hyperbolic cotangent function. Figure~\ref{fig:nu-small_control-mu} depicts the solutions of \eqref{eq:nu smaller than mu} with and without the PD controller.

{\subsection{Performance comparison between proportional and proportional-derivative delayed controllers}\label{ss:comparison with the P controller}
In this section, we illustrate why the delayed PD controller, $I(t) = k_p y(t - \tau_{\text{pd}}) - k_d y'(t - \tau_{\text{pd}})$, is more advantageous for the prescribed local exponential stabilization of the one-layer neural network \eqref{eq:hopfield model nonlinear intrinsec} than the delayed proportional (P) controller, $I(t) = -k_p' y(t - \tau_{\text{p}})$.}
{Recall that using $I(t) = -k_p' y(t-\tau_{\text{p}}) $ to locally stabilize $ \dot{y}(t) = -\nu y(t) + \mu \tanh(y(t)) + I(t) $ in the hyperexcitable regime where $ \mu > \nu $, reduces the problem to localizing the spectral abscissa of the quasipolynomial
\begin{equation}
    \Delta_2(s) = a + s + k_p' e^{-\tau_{\text{p}} s}, \qquad s \in \mathbb{C}, \quad (a := \nu - \mu)
\end{equation}
in the left half of the complex plane. By the P\'{o}lya-Szeg\"{o} bound \cite[Problem~206.2, page~144]{polya1972analysis}, $\Delta_2$ is of degree $2$. Assigning two real roots $ s_2 < s_1 $ to $ \Delta_2 $ yields
\begin{equation}\label{eq:a and k_p with 2rr}
    \hspace{-0.3cm}a(\tau_{\text{p}}) = -s_1 - \frac{e^{\tau_{\text{p}} s_2}}{\tau_{\text{p}} F_{-\tau_{\text{p}},1}(s_1, s_2)}, \quad 
    k_p'(\tau_{\text{p}}) = \frac{e^{\tau_{\text{p}} (s_1 + s_2)}}{\tau_{\text{p}} F_{-\tau_{\text{p}},1}(s_1, s_2)}
\end{equation}
where $ F_{-\tau_{\text{p}},1}(s_1, s_2) $ is defined in \eqref{eq:F 1}. From \cite[Theorem~12]{bedouhene2020real}, $ s_1 $ is the dominant root of $ \Delta_2 $, and there exists a unique $ \tau_{\text{p}}^{*} > 0 $ such that $ s_1 < 0 $ if, and only if, $ a(\tau_{\text{p}}^{*}) = 0 $, i.e., 
\begin{equation}\label{eq:tau critical P controller}
    \tau_{\text{p}}^{*} = \frac{1}{(s_1 - s_2)} \ln\left(\frac{s_2}{s_1}\right) > 0.
\end{equation}
For the delayed PD controller, Corollary~\ref{cor:equidistributed three real roots} shows that assigning three equidistributed real roots $ s_3 < s_2 < s_1 $ to the corresponding quasipolynomial $ \Delta_3(s)= s + a + e^{-\tau_{\text{pd}} s}(k_d s + k_p) $ yields
\begin{equation}\label{eq:tau critical PD controller}
    \tau_{\text{pd}}^{*} = \frac{1}{(s_1 - s_2)} \ln\left(\frac{s_3}{s_1}\right) > 0.
\end{equation}
 Therefore, when $s_1<0$, the two controllers ensure the prescribed local exponential stabilization of \eqref{eq:hopfield model nonlinear intrinsec} with the same decay rate as shown in Theorem~\ref{thm:exponential estimates of solutions 3rr} and Proposition~\ref{pro:exponential estimates of solutions 2rr}, see, for instance, image on the \textit{right} in Figure~\ref{fig:nu-small_control-mu}. Comparing \eqref{eq:tau critical P controller} and \eqref{eq:tau critical PD controller}, it follows that the delayed PD controller allows for larger delays than the delayed P controller, as $ \tau_{\text{pd}}^{*} > \tau_{\text{p}}^{*} $. This inequality holds even in non-equidistributed scenarios, though determining $ \tau_{\text{pd}}^{*} $ requires numerical computations, as it cannot be easily derived analytically.}

{Finally, when the inherent dynamics are unstable ($ \mu > \nu $), the delayed P controller requires a significantly larger gain $ k_p' $ than the combined gains $ k_p $ and $ k_d $ of the delayed PD controller. For simplicity, assume $ \nu = 1 $ and $ \mu = 2 $. Assigning three equidistributed real roots $ s_3 = -9 $, $ s_2 = -8 $, and $ s_1 = -7 $ for the PD controller gives
\begin{equation}\label{eq:kd}
    \tau_{\text{pd}} = \ln\left(5/4\right) \approx 0.22, \quad k_d = e^{-8\tau_{\text{pd}}} \approx 0.17,
\end{equation}
\begin{equation}\label{eq:kp}
    k_p = e^{-8\tau_{\text{pd}}} \left(8 + \coth\left(\tau_{\text{pd}} / 2\right)\right) \approx 2.85.
\end{equation}
In contrast, assigning two real roots $ s_2 = -8 $ and $ s_1 = -7 $ for the P controller yields
\begin{equation}
    \tau_{\text{p}} = \ln(9/8) \approx 0.12, \quad k_p' \approx 3.51.
\end{equation}
Clearly, $ \tau_{\text{pd}} > \tau_{\text{p}} $, $ k_p' > \max(k_p, k_d) $, $ k_p' > k_p + k_d $, and $ k_p' > \sqrt{k_p^2 + k_d^2} $. Moreover, decreasing $ s_2$ in the P controller design decreases $ \tau_{\text{p}}$ and increases the gain $ k_p'$, making the design even less favorable compared to the PD controller. For instance, with $ s_2 = -9$ and $ s_1 = -7$, we find
\begin{equation}
    \tau_{\text{p}} = \ln(\sqrt{5}/2) \approx 0.11, \qquad k_p' \approx 3.66.
\end{equation}
Conversely, increasing $ s_2$ increases the delay $ \tau_{\text{p}}$ and decreases the gain $ k_p'$. Furthermore, even in the limiting case where $ s_2 = s_1 = -7$, \cite[Theorem~3.1]{mazanti2021multiplicity} ensures that $ -7$ remains the dominant root of $ \Delta_2(s) = s - 1 + k_p' e^{-\tau_{\text{p}} s}$, yielding
\begin{equation}
    \tau_{\text{p}} = \frac{1}{8} = 0.125, \qquad k_p' = 3.33.
\end{equation}
This scenario still satisfies $ \tau_{\text{pd}} > \tau_{\text{p}}$, $ k_p' > \max(k_p, k_d)$, $ k_p' > k_p + k_d$, and $ k_p' > \sqrt{k_p^2 + k_d^2}$, where $ \tau_{\text{pd}}$, $ k_p$, and $ k_d$ are given by \eqref{eq:kp} and \eqref{eq:kd}. However, even this ``better'' scenario ($ s_2 = s_1 = -7$) is not appropriate in practice, as noted in \cite{michiels2017explicit}, because non-semi-simple spectral values are highly sensitive to minor perturbations due to their splitting mechanism.} 

{Let $k$ be the gain vector ($k\in\R$ for the delayed P controller and $k\in\R^2$ for the delayed PD controller). If one wants to stabilize with a constraint $|k|\le\gamma$ for some $\gamma>0$ where $|k|$ is the Euclidean norm of $k$, then the delayed PD controller is more likely to satisfy this constraint. Note that constrained stabilization of scalar linear delay systems with two delays has been considered in \cite[Section~5]{fueyo2023pole}.}

\section{Discussion}

This paper demonstrates the use of a delayed Proportional-Derivative (PD) controller in the continuous-time modeling of a one-layer Hopfield-type neural network to achieve local exponential stabilization. Through rigorous spectral analysis, we have developed a methodological framework that enhances the stability of the model, focusing on achieving a prescribed exponential decay rate. We have extended the spectral theory based on the CRRID property for linear functional differential equations of neutral type to neural dynamics, providing a powerful analytical tool for examining the stability of neural networks through their spectral properties. This has allowed us to determine conditions under which the network attains stability, even in systems where conventional methods predict instability.

{Our analysis reveals that while a delayed proportional controller can achieve the same decay rate as the delayed PD controller, it requires significantly higher gains and permits a shorter maximum allowable delay. }

Future research should explore the implementation of the delayed PD control strategy and the CRRID setting in more complex, multi-layer neural network architectures, which could better represent the intricate structures of biological neural systems. The MID setting developed in \cite{benarab2023multiplicity, boussaada2022generic, mazanti2021multiplicity} may also address this issue. As noted in \cite{michiels2017explicit}, non-semi-simple spectral values are sensitive to minor perturbations due to their splitting mechanism, which presents a challenge in practical implementations.

This study provides valuable insights into the local stability of the zero equilibrium in the nonlinear model. It is important to note that these findings primarily concern local dynamics. Future research should combine the spectral methods used for the linear equation with time-domain approaches based on Lyapunov functionals and linear matrix inequalities. Such a combined approach would facilitate a more comprehensive investigation into the global exponential stability of the nonlinear equation with a prescribed decay rate, broadening the applicability and reliability of the results presented here.


\section*{Acknowledgments}
The authors wish to thank their colleagues  {\sc{Fazia Bedouhene}} (LMPA, The Mouloud Mammeri University of Tizi Ouzou, Algeria), {\sc{Antoine Chaillet}}, {\sc{Guilherme Mazanti}} and {\sc{Silviu-Iulian Niculescu}} (L2S, University Paris-Saclay, France), and {\sc{Timoth\'ee Schmoderer}} (Prisme, University of Orl\'eans, France)  for valuable discussions on the CRRID property and references suggestions on time-domain approaches based on Lyapunov functionals.  We also thank the anonymous referees for carefully reading the paper and for their valuable comments that helped us enhance the readability.

\end{document}